\newtheorem{theorem}{Theorem}[section]
\newtheorem{lemma}[theorem]{Lemma}
\newtheorem{corollary}[theorem]{Corollary}
\newtheorem{remark}[theorem]{Remark}
\renewcommand \theequation {%
\ifnum \c@section>\z@ \@arabic\c@section.%
\fi\@arabic\c@equation} \@addtoreset{equation}{section}
\providecommand{\ud}[1]{\mathrm{d}{#1}}
\providecommand{\abs}[1]{\left\vert#1\right\vert}
\providecommand{\nm}[1]{\left\Vert#1\right\Vert}
\def\p{\partial}
\def\rt{\rightarrow}
\def\r{\mathbb{R}}
\def\no{\nonumber}
\def\ds{\displaystyle}
\def\u{\mathscr{U}}
\def\w{\mathscr{W}}
\def\c{\mathcal{C}}
\def\rr{\mathcal{R}}
\def\k{\mathcal{K}}
\def\i{{\bf{1}}}
\def\f{\mathfrak{F}}
\def\t{t}
\def\ep{\epsilon}
\def\v{\vec}
\def\na{\nabla}
\def\de{\Delta}
\def\uu{\mathfrak{U}}
\def\ww{\mathfrak{W}}
\def\uk{\mathfrak{u}}
\def\wk{\mathfrak{w}}
\begin{document}
\title{Renormalized Energy for Dislocations in Quasi-Crystals}

\author{Lei Wu}
\thanks{Department of Mathematical Sciences, Carnegie Mellon University, Pittsburgh, PA 15213, USA.}
\thanks{Email: lwu2@andrew.cmu.edu}

%
%

\begin{abstract}
Anti-plane shear deformations of a hexagonal quasi-crystal with multiple screw dislocations are considered. Using a variational formulation, the elastic equilibrium is characterized via limit of minimizers of a core-regularized energy functional. A sharp estimate of the asymptotic energy when the core radius tends to zero is obtained using higher-order $\Gamma$-convergence. Also, the interaction between dislocations and the Peach-K\"{o}hler force at each dislocation are analyzed.
\ \\
\textbf{Keywords:} dislocation; renormalized energy; $\Gamma$-convergence.
\end{abstract}

\maketitle

\pagestyle{myheadings} \thispagestyle{plain} \markboth{LEI WU}{RENORMALIZED ENERGY FOR DISLOCATIONS IN QUASI-CRYSTALS}


%
%

\section{Introduction}

\subsection{Problem Settings}

Quasi-crystals were introduced in 1982 by Shechtman(see \cite{Shechtman.Blech.Gratias.Cahn1984}) as a kind of non-crystalline condensed matter state. In contrast with crystals with periodic atomic arrangement, quasi-crystals only exhibit quasi-periodicity, i.e. they have perfect long-range order (like mirror symmetry) but no three-dimensional periodicity.

Unlike many other amorphous solids, quasi-crystals have similar elastic properties to these of crystals. More importantly, based on the Landau density wave theory(see \cite{Fan2011}),
quasi-crystals can be described as a projection of higher-dimensional crystals into a lower-dimensional space. This requires two displacement fields $\v u$ and $\v w$ defined in the physical domain of the quasi-crystal, where $\v u$ is a phonon field which is similar to the displacement field in crystals and $\v w$ is an extra phase field. Also,
we may define the strain and stress tensors in phonon space and phase space.

To be precise, we consider anti-plane shear deformations of a one-dimensional hexagonal quasi-crystal (see \cite{Blass.Morandotti2014}, \cite{Blass.Fonseca.Leoni.Morandotti2015}, \cite{Cermelli.Gurtin1999}, \cite{Cermelli.Leoni2005}, \cite{Fan2011}, \cite{Liu.Guo.Fan2005}). Given an elastic body $\Xi=\Omega\times\r$, where $\Omega\subset\r^2$ is simply-connected, bounded and open, with Lipschitz $\p\Omega$, we denote the phonon deformation as
\begin{eqnarray*}
\Phi: (x,y,z)\rt (x,y,z+u(x,y)),
\end{eqnarray*}
and the phase deformation as
\begin{eqnarray*}
\Psi: (x,y,z)\rt (x,y,z+w(x,y)),
\end{eqnarray*}
for some functions $u,w: \Omega\rt\r$. This allows us to reduce the three-dimensional problem to a two-dimensional setting.
Hence, the phonon strain tensor is defined as
\begin{eqnarray}\label{it 01}
{U}&:=&\na(0,0,u)=\left(
\begin{array}{ccc}
0&0&0\\\rule{0ex}{2.0em}
0&0&0\\\rule{0ex}{2.0em}
\dfrac{\p u}{\p x}&\dfrac{\p u}{\p y}&0
\end{array}
\right),
\end{eqnarray}
which can be symmetrized as
\begin{eqnarray*}
\tilde{U}&:=&\frac{{U}+{U}^T}{2}=\left(
\begin{array}{ccc}
0&0&\dfrac{1}{2}\dfrac{\p u}{\p x}\\\rule{0ex}{2.0em}
0&0&\dfrac{1}{2}\dfrac{\p u}{\p y}\\\rule{0ex}{2.0em}
\dfrac{1}{2}\dfrac{\p u}{\p x}&\dfrac{1}{2}\dfrac{\p u}{\p y}&0
\end{array}
\right),
\end{eqnarray*}
and the non-symmetric phase strain tensor is defined as
\begin{eqnarray}\label{it 02}
{W}&:=&\na(0,0,w)=\left(
\begin{array}{ccc}
0&0&0\\\rule{0ex}{2.0em}
0&0&0\\\rule{0ex}{2.0em}
\dfrac{\p w}{\p x}&\dfrac{\p w}{\p y}&0
\end{array}
\right).
\end{eqnarray}
The relations (\ref{it 01}) and (\ref{it 02}) hold for a quasi-crystal when dislocations are absent. If dislocations are taken into consideration, then the strain tensor is singular at the site of the dislocations, and in particular it is a line singularity for a screw dislocation.
Dislocations are one-dimensional defects in a crystalline-type material, whose presence may greatly affect the elastic and other properties (see \cite{Hirth.Lothe1982} and \cite{Nabarro1967}). Dislocation lines of quasi-crystals were observed in experiments soon after Shechtman's discover (see \cite{Agiasofitou.Lazar.Kirchner2010}, \cite{Lazar.Agiasofitou2014}, \cite{Levine.Lubensky.Ostlund.Ramaswamy.Steinhardt.Toner1985}, \cite{Liu.Guo.Fan2005}).

In a quasi-crystal undergoing a shear deformation, a screw dislocation may be described by a position $(x,y)\in\Omega$ and a Burger's vector $\vec b=b\vec e_z$. Here $\v e_z$ denotes the unit vector in the $z$ direction and $b$, the Burger's modulus, represents the magnitude of the dislocation.
The presence of dislocation yields a singularity at position $(x,y)$ and thus strain tensors fail to be the gradients of smooth displacement fields, i.e. (\ref{it 01}) and (\ref{it 02}) do not hold any more.

To be precise, consider $N$ dislocations at $\v d_i=(x_i,y_i)$ for $i=1,2,\ldots,N$, with Burger's vector for the phonon field given by $\v b_u^i=b_u^i\v e_z$ and for the phase field given by $\v b_w^i=b_w^i\v e_z$. The strain tensors ${U}$ and ${W}$ now satisfy
\begin{eqnarray*}
(\na\times{U})\cdot\v e_z=\sum_{i=1}^N\v b_u^i\delta_{\v d_i},\qquad
(\na\times{W})\cdot\v e_z=\sum_{i=1}^N\v b_w^i\delta_{\v d_i},
\end{eqnarray*}
which is equivalent to
\begin{eqnarray*}
b_u^i=\int_{\ell_i}{U}\cdot t\ud{s},\qquad
b_w^i=\int_{\ell_i}{W}\cdot t\ud{s},
\end{eqnarray*}
where $\ell_i$ is any counterclockwise loop that surrounds $\v d_i$ and no other dislocation points, $t$ is the tangent of $\ell_i$ and $\ud{s}$ is the line differential. Similarly, we can still define the symmetrized phonon strain tensor $\tilde{U}=\dfrac{{U}+{U}^T}{2}$.

Denote the phonon stress tensor as $\sigma$ and the phase stress tensor as $\rho$, which are $3\times 3$ matrices in principle. For the convenience of computation, we may straighten $\sigma$, $\rho$, $\tilde{U}$ and ${W}$ to column vectors with $9$ components. Then the generalized Hooke's law (see \cite{Fan2011}) reads as
\begin{eqnarray*}
\left(
\begin{array}{c}
\sigma\\
\rho
\end{array}
\right)=\left(
\begin{array}{cc}
\c&\rr\\
\rr^T&\k
\end{array}
\right)
\left(
\begin{array}{c}
\tilde{U}\\
{W}
\end{array}
\right),
\end{eqnarray*}
where $\c$, $\rr$, $\k$ are $9\times9$ matrices such that $\left(
\begin{array}{cc}
\c&\rr\\
\rr^T&\k
\end{array}
\right)$ is positive definite and depends on the species of the quasi-crystal.
The equilibrium equations are
\begin{eqnarray*}
\na\cdot\sigma=0,\qquad
\na\cdot\rho=0,
\end{eqnarray*}
where the divergence is performed row by row. Here we use straightened vectors and matrices interchangeably. The free energy is
\begin{eqnarray*}
J[{U},{W}]:=\int_{\Xi}\f[{U},{W}]\ud{x}\ud{y}\ud{z},
\end{eqnarray*}
where the energy density $\f$ is given by
\begin{eqnarray*}
\f[{U},{W}]:=\frac{1}{2}\left(
\begin{array}{cc}
\tilde{U}^T&{W}^T
\end{array}
\right)\left(
\begin{array}{cc}
\c&\rr\\
\rr^T&\k
\end{array}
\right)
\left(
\begin{array}{c}
\tilde{U}\\
{W}
\end{array}
\right).
\end{eqnarray*}
We intend to study the structure of the energy associated with this system.

\subsection{Problem Simplification}

Since $U$ and $W$ are sparse matrices, we can reduce the $18$-variable problem to a $4$-variable problem (see \cite{Fan2011}). In particular, for $N$ dislocation points at $\v d_i$,  $i=1,2,\ldots,N$, with Burger's vectors for the phonon field given by $\v b_u^i$ and for the phase field given by $\v b_w^i$, it suffices to consider $\u=(\u_x,\u_y)$ and $\w=(\w_x,\w_y)$ satisfying
\begin{eqnarray}\label{problem}
\left\{
\begin{array}{l}
\left(
\begin{array}{c}
\sigma\\
\rho
\end{array}
\right)=\left(
\begin{array}{cc}
\c&\rr\\
\rr^T&\k
\end{array}
\right)
\left(
\begin{array}{c}
\u\\
\w
\end{array}
\right),\\\rule{0ex}{1.5em}
\na\times\u=\displaystyle\sum_{i=1}^Nb_u^i\delta_{\v d_i},\quad
\na\times\w=\displaystyle\sum_{i=1}^Nb_w^i\delta_{\v d_i},\\\rule{0ex}{1.0em}
\na\cdot\sigma=0,\quad
\na\cdot\rho=0,
\end{array}
\right.
\end{eqnarray}
where $\sigma=(\sigma_x,\sigma_y)$, $\rho=(\rho_x,\rho_y)$ are vectors with $2$ components, and $\c$, $\rr$, $\k$ are $2\times2$ matrices, $\c$ and $\k$ are symmetric and positive definite, $\na\cdot\v f:=\dfrac{\p f_x}{\p x}+\dfrac{\p f_y}{\p y}$ and $\na\times\v f:=\dfrac{\p f_y}{\p x}-\dfrac{\p f_x}{\p y}$. Roughly speaking, $\u$ plays the role of $\left(\dfrac{\p u}{\p x},\dfrac{\p u}{\p y}\right)$ and $\w$ plays the role of $\left(\dfrac{\p w}{\p x},\dfrac{\p w}{\p y}\right)$. Here we omit the symmetrization procedure of $\u$ since it can be directly incorporated into Hooke's law, and we do not change the notation for $\sigma$, $\rho$, $\c$, $\rr$, $\k$.
The free energy is
\begin{eqnarray*}
J[\u,\w]:=\int_{\Omega}\f[\u,\w]\ud{x}\ud{y},
\end{eqnarray*}
with density
\begin{eqnarray*}
\f[\u,\w]:=\frac{1}{2}\left(
\begin{array}{cc}
\u^T&\w^T
\end{array}
\right)\left(
\begin{array}{cc}
\c&\rr\\
\rr^T&\k
\end{array}
\right)
\left(
\begin{array}{c}
\u\\
\w
\end{array}
\right).
\end{eqnarray*}
In a hexagonal quasi-crystal (see \cite{Fan2011}), we may further simplify the Hooke's law as
\begin{eqnarray*}
\c=\left(
\begin{array}{cc}
C&0\\
0&C
\end{array}
\right),\quad\rr=\rr^T=\left(
\begin{array}{cc}
R&0\\
0&R
\end{array}
\right),\quad\k=\left(
\begin{array}{cc}
K&0\\
0&K
\end{array}
\right),
\end{eqnarray*}
for some constants $C$, $R$, $K$ with
\begin{eqnarray}\label{assumption}
C,K>0$ and $CK>R^2,
\end{eqnarray}
i.e. the matrix $\left(
\begin{array}{cc}
\c&\rr\\
\rr^T&\k
\end{array}
\right)$ is positive definite. Also, the free energy density reduces to
\begin{eqnarray}\label{it temp 11}
\f[\u,\w]&=&\frac{1}{2}\bigg(C\abs{\u}^2+K\abs{\w}^2+2R(\u\cdot\w)\bigg).
\end{eqnarray}

\subsection{Core Regularization}

It is well-known that in a neighborhood of a dislocation point, the free energy blows up (see \cite{Cermelli.Gurtin1999} and \cite{Cermelli.Leoni2005}). Similar to the techniques in \cite{Blass.Fonseca.Leoni.Morandotti2015} and \cite{Cermelli.Leoni2005} for crystals, we consider a variational formulation by removing a core $B_{\ep}(\v d_i)=\{\v d=(x,y): \abs{\v d-\v d_i}\leq\ep\}$ around each dislocation, and we consider the minimization problem
\begin{eqnarray}\label{problem.}
\min_{(\uu,\ww)\in H_0^{\ep}}\int_{\Omega_{\ep}}\f[\uu,\ww]\ud{x}\ud{y},
\end{eqnarray}
where $\Omega_{\ep}:=\Omega\bigg\backslash \bigg(\displaystyle\bigcup_{i=1}^NB_{\ep}(\v d_i)\bigg)$ and the admissible set is defined by
\begin{eqnarray*}
H_0^{\ep}&=&\bigg\{(\uu,\ww):\uu,\ww\in L^2(\Omega_{\ep}),\ \ \na\times\uu=0,\na\times\ww=0\ \ \text{in}\ \ \Omega_{\ep},\\
&&\int_{\p B_{\ep}(\v d_i)}\uu\cdot\t\ud{s}=b_u^i,\ \ \int_{\p B_{\ep}(\v d_i)}\ww\cdot\t\ud{s}=b_w^i,\ \ i=1,2,\ldots,N\bigg\},\no
\end{eqnarray*}
where $\t$ is the unit tangent vector at $\p B_{\ep}(\v d_i)$. Here $\uu\cdot\t$ and $\ww\cdot\t$ are the tangential traces of $\uu$ and $\ww$, which are well-defined in the $L^2$ curl-free space(see \cite{Cermelli.Gurtin1999} and \cite{Cermelli.Leoni2005}).

Assume that the solution to the above minimization problem admits a unique solution as $(\u_{\ep},\w_{\ep})$. Our goal is to study the behavior of $(\u_{\ep},\w_{\ep})$ and of the free energy
\begin{eqnarray*}
J_{\ep}[\u_{\ep},\w_{\ep}]:=\displaystyle\int_{\Omega_{\ep}}\f[\u_{\ep},\w_{\ep}]\ud{x}\ud{y},
\end{eqnarray*}
as $\ep\rt0$.

\subsection{Main Theorem}

We intend to use $\Gamma$-convergence to analyze the minimizer and energy structure.
Define the functional $J_{\ep}^{(0)}: L^2(\Omega)\times L^2(\Omega)\rt[0,\infty]$ by
\begin{eqnarray*}
J_{\ep}^{(0)}\left[\uu_{\ep},\ww_{\ep}\right]:=\left\{
\begin{array}{ll}
\ds\int_{\Omega_{\ep}}\frac{1}{2}\bigg(C\abs{\uu_{\ep}}^2+K\abs{\ww_{\ep}}^2+2R(\uu_{\ep}\cdot\ww_{\ep})\bigg)\ud{x}\ud{y}\\
\qquad\ \ \text{if}\ \ (\uu_{\ep},\ww_{\ep})=\left(\dfrac{\tilde\uu_{\ep}}{\abs{\ln(\ep)}^{1/2}},\dfrac{\tilde\ww_{\ep}}{\abs{\ln(\ep)}^{1/2}}\right)\ \ \text{for some}\ \ (\tilde\uu_{\ep},\tilde\ww_{\ep})\in H_0^{\ep},\\
\infty\ \ \text{otherwise in}\ \ L^2(\Omega)\times L^2(\Omega).
\end{array}
\right.
\end{eqnarray*}

\begin{theorem}(Compactness)\label{theorem 2.}(see Section 3.1)
Assume that (\ref{assumption}) holds and $(\uu_{\ep},\ww_{\ep})\in L^2(\Omega)\times L^2(\Omega)$ satisfy
\begin{eqnarray*}
\sup_{\ep>0}J_{\ep}^{(0)}[\uu_{\ep},\ww_{\ep}]\leq C_0.
\end{eqnarray*}
Then there exists $v_u,v_w\in H^1(\Omega)$ such that up to the extraction of subsequence (non-relabelled),
\begin{eqnarray*}
\left({\bf{1}}_{\Omega_{\ep}}\uu_{\ep},{\bf{1}}_{\Omega_{\ep}}\ww_{\ep}\right)\rightharpoonup(\na v_u,\na v_w)\ \text{in}\ \text{weak}-L^2\ \text{as}\ \ep\rt0.
\end{eqnarray*}
\end{theorem}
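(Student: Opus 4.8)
The plan is to exploit the uniform energy bound together with the positive-definiteness assumption (\ref{assumption}) to extract weak limits, and then to identify those limits as gradients of $H^1$ functions by using the curl-free constraint. First I would observe that since the $6\times6$ (here $4\times4$, or after the hexagonal reduction, effectively $2\times2$ block) Hooke matrix $\left(\begin{smallmatrix}\c&\rr\\\rr^T&\k\end{smallmatrix}\right)$ is positive definite, there is a constant $c_0>0$ (depending only on $C,K,R$) such that $\f[\uu,\ww]\geq c_0(\abs{\uu}^2+\abs{\ww}^2)$ pointwise; this is the coercivity statement equivalent to $C,K>0$ and $CK>R^2$. Combined with $\sup_\ep J_\ep^{(0)}[\uu_\ep,\ww_\ep]\leq C_0$ and the definition of $J_\ep^{(0)}$, which forces $(\uu_\ep,\ww_\ep)=(\tilde\uu_\ep/\abs{\ln\ep}^{1/2},\tilde\ww_\ep/\abs{\ln\ep}^{1/2})$ with $(\tilde\uu_\ep,\tilde\ww_\ep)\in H_0^\ep$, this gives
\begin{eqnarray*}
\int_{\Omega_\ep}\left(\abs{\uu_\ep}^2+\abs{\ww_\ep}^2\right)\ud{x}\ud{y}\leq \frac{C_0}{c_0},
\end{eqnarray*}
so that $\i_{\Omega_\ep}\uu_\ep$ and $\i_{\Omega_\ep}\ww_\ep$ are bounded in $L^2(\Omega)$. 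Hence along a subsequence they converge weakly in $L^2(\Omega)$ to some limits $U_*,W_*$.

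Next I would show $U_*$ and $W_*$ are curl-free in the distributional sense on all of $\Omega$, and in fact are gradients. The constraint $\na\times\tilde\uu_\ep=0$ in $\Omega_\ep$ means that against any test vector field of the form $\na^\perp\varphi$ with $\varphi\in C_c^\infty(\Omega_\ep)$ we get zero; the subtlety is passing from $\Omega_\ep$ to $\Omega$, i.e. controlling the behaviour near the excised cores $B_\ep(\v d_i)$. The key point is that the area of $\bigcup_i B_\ep(\v d_i)$ is $O(\ep^2)$, so $\i_{\Omega_\ep}\to 1$ in $L^2_{\mathrm{loc}}$ and the contribution of the cores to any fixed test integral vanishes; meanwhile the circulation conditions $\int_{\p B_\ep(\v d_i)}\tilde\uu_\ep\cdot\t\,\ud{s}=b_u^i$ scale like $\abs{\ln\ep}^{1/2}$ after dividing, i.e. $\int_{\p B_\ep(\v d_i)}\uu_\ep\cdot\t\,\ud{s}=b_u^i/\abs{\ln\ep}^{1/2}\to 0$. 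Therefore in the limit the circulation of $U_*$ around each $\v d_i$ is zero, so $U_*$ is curl-free on the whole simply-connected $\Omega$ (the dislocation singularities have been washed out by the $\abs{\ln\ep}^{1/2}$ rescaling). Since $\Omega$ is simply-connected, $U_*=\na v_u$ and $W_*=\na v_w$ for some $v_u,v_w\in H^1(\Omega)$ (unique up to additive constants, fixed by a normalization such as zero mean).

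The main obstacle I anticipate is the rigorous handling of the tangential traces and the passage to the limit in the circulation integrals, since $\uu_\ep$ lives only in the $L^2$ curl-free space on the $\ep$-dependent domain $\Omega_\ep$, not in $H^1$, so $\p B_\ep(\v d_i)$-traces must be interpreted in the weak sense referenced after the definition of $H_0^\ep$. A clean way around this is to avoid boundary integrals entirely: test the distributional curl against $\na^\perp\varphi$ for $\varphi\in C_c^\infty(\Omega)$, split $\varphi$ near each $\v d_i$, and use that $\int_{B_\ep(\v d_i)}\uu_\ep\cdot\na^\perp\varphi$ is bounded by $\nm{\uu_\ep}_{L^2(B_\ep)}\,\nm{\na\varphi}_{L^\infty}\,\abs{B_\ep}^{1/2}=O(\abs{\ln\ep}^{-1/2}\ep)$, which kills the core contribution directly; combined with the curl-free-ness on $\Omega_\ep$ this yields $\int_\Omega U_*\cdot\na^\perp\varphi=0$. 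A secondary technical point is to make sure $\i_{\Omega_\ep}\uu_\ep\rightharpoonup\na v_u$ and not merely $\i_{\Omega_\ep}\uu_\ep\rightharpoonup U_*$ with $U_*$ possibly a distribution — but the $L^2$ bound already places $U_*\in L^2(\Omega)$, and curl-free $L^2$ fields on a simply-connected domain are exactly $L^2$ gradients, i.e. gradients of $H^1$ functions, so the conclusion follows. The diagonalization to extract one subsequence working for both $\uu_\ep$ and $\ww_\ep$ simultaneously is routine.
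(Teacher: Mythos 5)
Your strategy is genuinely different from the paper's, and its skeleton is sound: you get a uniform $L^2(\Omega)$ bound directly from the coercivity of the quadratic form (correct, since $C,K>0$ and $CK>R^2$ give $\f[\uu,\ww]\geq c_0(\abs{\uu}^2+\abs{\ww}^2)$ pointwise), extract a weak-$L^2$ limit $(U_*,W_*)$, and then argue that the limit is curl-free on all of $\Omega$ and hence a gradient of an $H^1$ function by simple connectedness. The paper instead subtracts the explicit single-dislocation fields $\u_i,\w_i$ from $\tilde\uu_{\ep},\tilde\ww_{\ep}$ so that the remainder is an exact gradient $\na\uk_{\ep}$ on $\Omega_{\ep}$, bounds it in $H^1$ after rescaling, extends it to $\Omega$, and shows separately that $\u_i/\abs{\ln(\ep)}^{1/2}\rightharpoonup 0$ using $\u_i\in L^1(\Omega)$. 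Your route dispenses with the explicit solutions and the extension lemma, which is a real economy \emph{provided} the curl identification can be closed.

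That is exactly where your execution has a gap. The workaround you propose for the trace difficulty, namely bounding $\int_{B_{\ep}(\v d_i)}\uu_{\ep}\cdot\na^{\perp}\varphi$ by $\nm{\uu_{\ep}}_{L^2(B_{\ep})}\nm{\na\varphi}_{L^{\infty}}\abs{B_{\ep}}^{1/2}$, is vacuous: $\uu_{\ep}$ is not defined on the excised core and ${\bf{1}}_{\Omega_{\ep}}\uu_{\ep}$ vanishes there, so the core contributes nothing to the test integral in the first place. The true obstruction lies in the annulus around each $\v d_i$: curl-freeness on $\Omega_{\ep}$ only gives $\int_{\Omega_{\ep}}\uu_{\ep}\cdot\na^{\perp}\psi=0$ for $\psi$ compactly supported in $\Omega_{\ep}$, and a $\varphi\in C_c^{\infty}(\Omega)$ whose support meets the cores is not such a $\psi$; the discrepancy is a tangential-trace term on $\p B_{\ep}(\v d_i)$, of which the $L^2$ bound controls only the average (the circulation $b_u^i/\abs{\ln(\ep)}^{1/2}$), not the full $H^{-1/2}$ trace. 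Two clean repairs: (i) observe that for $\varphi\in C_c^{\infty}(\Omega\setminus\{\v d_1,\dots,\v d_N\})$ the identity $\int_{\Omega}U_*\cdot\na^{\perp}\varphi=0$ holds exactly once $\ep$ is small, so the distributional curl of $U_*\in L^2(\Omega)$ is supported on finitely many points; no nonzero distribution supported at points belongs to $H^{-1}$ in two dimensions, hence $\na\times U_*=0$ on all of $\Omega$; or (ii) for general $\varphi$, split near $\v d_i$ as $\varphi=\varphi(\v d_i)\chi_r+(\varphi-\varphi(\v d_i))\chi_r+\varphi(1-\chi_r)$ with a cutoff $\chi_r$ at a fixed scale $r$, note that $\int\uu_{\ep}\cdot\na^{\perp}\chi_r$ equals (up to sign) the circulation and hence tends to $0$, bound the middle piece by $O(r)$ via Cauchy--Schwarz, and send $\ep\rt0$ followed by $r\rt0$. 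With either repair your argument is complete; as written, the key step does not go through.
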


With compactness theorem in hand, we can show the zeroth-order $\Gamma$-convergence.
\begin{theorem}\label{theorem 3}( $0^{th}$-Order $\Gamma$-Convergence )(see Section 3.2)
Assume that (\ref{assumption}) holds. Define the functional $J_0^{(0)}: L^2(\Omega)\times L^2(\Omega)\rt[0,\infty]$ as
\begin{eqnarray*}
J_{0}^{(0)}[\uu,\ww]:=\left\{
\begin{array}{ll}
\ds\int_{\Omega}\frac{1}{2}\bigg(C\abs{\na v_u}^2+K\abs{\na v_w}^2+2R(\na v_u\cdot\na v_w)\bigg)+\sum_{i=1}^N\frac{C(b_u^i)^2+K(b_w^i)^2+2R(b_u^i)(b_w^i)}{4\pi}&\\
\qquad\qquad\qquad\qquad\qquad\qquad\ \ \text{if}\ \ (\uu,\ww)=\left(\na v_u, \na v_w\right)\ \ \text{for}\ \ \text{some}\ \ v_u,v_w\in H^1(\Omega),\\
\infty\qquad\ \ \text{otherwise in}\ \ L^2(\Omega)\times L^2(\Omega).
\end{array}
\right.
\end{eqnarray*}
Then
\begin{enumerate}
\item
For any sequence of pairs $(\uu_{\ep},\ww_{\ep})\in L^2(\Omega)\times L^2(\Omega)$ such that $\left(\uu_{\ep},\ww_{\ep}\right)\rightharpoonup(\uu,\ww)$ in weak-$L^2(\Omega)$, we have $\liminf_{\ep\rt0}J_{\ep}^{(0)}[\uu_{\ep},\ww_{\ep}]\geq J_{0}^{(0)}[\na v_u,\na v_w]$.
\item
There exists a sequence of pairs $(\uu_{\ep},\ww_{\ep})\in L^2(\Omega)\times L^2(\Omega)$ such that $\left(\uu_{\ep},\ww_{\ep}\right)\rightharpoonup(\uu,\ww)$ in weak-$L^2(\Omega)$, we have $\limsup_{\ep\rt0}J_{\ep}^{(0)}[\uu_{\ep},\ww_{\ep}]\leq J_{0}^{(0)}[\na v_u,\na v_w]$,
\end{enumerate}
which means
\begin{eqnarray*}
J_{\ep}^{(0)}[\uu_{\ep},\ww_{\ep}]\rt J_0^{(0)}[\uu,\ww],
\end{eqnarray*}
in the sense of $\Gamma$-convergence in weak-$L^2(\Omega)$
\end{theorem}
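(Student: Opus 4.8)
\medskip
\noindent\textbf{Strategy.} The plan is to prove the two defining properties of $\Gamma$-convergence separately: the $\liminf$ inequality (1) and the existence of a recovery sequence (2); together they give $J_\ep^{(0)}\rt J_0^{(0)}$, and only bounded-energy sequences matter, for which Theorem \ref{theorem 2.} forces the weak limit to be a pair of gradients $(\na v_u,\na v_w)$ with $v_u,v_w\in H^1(\Omega)$ (the vanishing cores do not affect the weak limit). Both directions use the same device: split $\Omega_\ep$ into a far field $\Omega_\delta:=\Omega\setminus\bigcup_{i=1}^NB_\delta(\v d_i)$, with $\delta>0$ small and fixed so that the $B_\delta(\v d_i)$ are pairwise disjoint and compactly contained in $\Omega$, and the thin annuli $A_i^{\ep,\delta}:=B_\delta(\v d_i)\setminus B_\ep(\v d_i)$; treat the far field by lower semicontinuity (resp. continuity) of the convex quadratic energy and the annuli by a circulation estimate, sending $\ep\rt0$ first and $\delta\rt0$ afterwards. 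Write $Q(a,c):=Ca^2+Kc^2+2Rac$, positive-definite by (\ref{assumption}); decomposing vectors into tangential and normal parts along a curve gives the pointwise identity $\f[\uu,\ww]=\half Q(\uu\cdot\t,\ww\cdot\t)+\half Q(\uu\cdot\v n,\ww\cdot\v n)\geq\half Q(\uu\cdot\t,\ww\cdot\t)$, which is what makes the annular estimate sharp. Also, since $\f$ is $2$-homogeneous, $J_\ep^{(0)}[\uu_\ep,\ww_\ep]=\abs{\ln\ep}^{-1}\int_{\Omega_\ep}\f[\tilde\uu_\ep,\tilde\ww_\ep]$ whenever $(\uu_\ep,\ww_\ep)=(\tilde\uu_\ep,\tilde\ww_\ep)/\abs{\ln\ep}^{1/2}$.

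\medskip
\noindent\textbf{Lower bound (liminf inequality).} Assume $\liminf_\ep J_\ep^{(0)}[\uu_\ep,\ww_\ep]<\infty$ (else nothing to prove); along a subsequence realizing the $\liminf$ the energies are bounded, so $(\uu_\ep,\ww_\ep)=(\tilde\uu_\ep,\tilde\ww_\ep)/\abs{\ln\ep}^{1/2}$ with $(\tilde\uu_\ep,\tilde\ww_\ep)\in H_0^\ep$ and, by Theorem \ref{theorem 2.}, $(\uu,\ww)=(\na v_u,\na v_w)$. Split $\int_{\Omega_\ep}\f[\tilde\uu_\ep,\tilde\ww_\ep]$ over $\Omega_\delta$ and the $A_i^{\ep,\delta}$. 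On $\Omega_\delta$, $\abs{\ln\ep}^{-1}\int_{\Omega_\delta}\f[\tilde\uu_\ep,\tilde\ww_\ep]=\int_{\Omega_\delta}\f[\uu_\ep,\ww_\ep]$ and $(\uu_\ep,\ww_\ep)\rightharpoonup(\na v_u,\na v_w)$ weakly in $L^2(\Omega_\delta)$, so weak lower semicontinuity of the convex functional $(\uu,\ww)\mapsto\int_{\Omega_\delta}\f[\uu,\ww]$ yields $\liminf_\ep\int_{\Omega_\delta}\f[\uu_\ep,\ww_\ep]\geq\int_{\Omega_\delta}\f[\na v_u,\na v_w]$. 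On $A_i^{\ep,\delta}$ the field $(\tilde\uu_\ep,\tilde\ww_\ep)$ is curl-free, so $\int_{\p B_r(\v d_i)}\tilde\uu_\ep\cdot\t\,\ud s=b_u^i$ and $\int_{\p B_r(\v d_i)}\tilde\ww_\ep\cdot\t\,\ud s=b_w^i$ for every $r\in(\ep,\delta)$; combining $\f\geq\half Q(\cdot,\cdot)$ with Jensen's inequality for the convex form $Q$ on the circle $\p B_r(\v d_i)$ of length $2\pi r$ gives
\[
\int_{\p B_r(\v d_i)}\f[\tilde\uu_\ep,\tilde\ww_\ep]\,\ud s\ \geq\ \frac{1}{2}\cdot 2\pi r\cdot Q\!\left(\frac{b_u^i}{2\pi r},\frac{b_w^i}{2\pi r}\right)=\frac{Q(b_u^i,b_w^i)}{4\pi r},
\]
and integrating over $r\in(\ep,\delta)$ gives $\int_{A_i^{\ep,\delta}}\f[\tilde\uu_\ep,\tilde\ww_\ep]\geq\frac{Q(b_u^i,b_w^i)}{4\pi}\ln(\delta/\ep)$. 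Dividing by $\abs{\ln\ep}$, using superadditivity of $\liminf$ and $\ln(\delta/\ep)/\abs{\ln\ep}\rt1$, and then letting $\delta\rt0$ (so that $\int_{\Omega_\delta}\f[\na v_u,\na v_w]\uparrow\int_\Omega\f[\na v_u,\na v_w]$ by monotone convergence, the integrand being nonnegative), we obtain $\liminf_\ep J_\ep^{(0)}[\uu_\ep,\ww_\ep]\geq J_0^{(0)}[\na v_u,\na v_w]$.

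\medskip
\noindent\textbf{Upper bound (recovery sequence).} Let $(\uu,\ww)=(\na v_u,\na v_w)$ (otherwise $J_0^{(0)}[\uu,\ww]=\infty$). I would take, on $\Omega_\ep$,
\[
\tilde\uu_\ep:=\abs{\ln\ep}^{1/2}\na v_u+G_u^\ep,\qquad G_u^\ep:=\na^\perp h_u^\ep,\qquad h_u^\ep:=\sum_{i=1}^N\frac{b_u^i}{2\pi}\ln\abs{x-\v d_i},
\]
with $\na^\perp:=(-\p_y,\p_x)$, and likewise $\tilde\ww_\ep$, $G_w^\ep$, $h_w^\ep$; put $\uu_\ep:=\tilde\uu_\ep/\abs{\ln\ep}^{1/2}=\na v_u+G_u^\ep/\abs{\ln\ep}^{1/2}$ on $\Omega_\ep$ (and $0$ on the cores), similarly $\ww_\ep$. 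Since $\de h_u^\ep=\sum_ib_u^i\delta_{\v d_i}=0$ in $\Omega_\ep$, $G_u^\ep$ is curl-free there with circulation $b_u^i$ around $\p B_\ep(\v d_i)$ while $\na v_u$ has zero circulation, so $(\tilde\uu_\ep,\tilde\ww_\ep)\in H_0^\ep$. As $\nm{G_u^\ep}_{L^2(\Omega_\ep)}^2=O(\abs{\ln\ep})$ while $\int G_u^\ep\cdot\phi$ is eventually $\ep$-independent for every $\phi\in C_c^\infty(\Omega\setminus\{\v d_1,\dots,\v d_N\})$, density gives $G_u^\ep/\abs{\ln\ep}^{1/2}\rightharpoonup0$, hence $\uu_\ep\rightharpoonup\na v_u$, $\ww_\ep\rightharpoonup\na v_w$. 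Expanding the quadratic form,
\[
J_\ep^{(0)}[\uu_\ep,\ww_\ep]=\int_{\Omega_\ep}\f[\na v_u,\na v_w]+\frac{1}{\abs{\ln\ep}}\int_{\Omega_\ep}\f[G_u^\ep,G_w^\ep]+\frac{1}{\abs{\ln\ep}^{1/2}}\int_{\Omega_\ep}\mathcal{B}\big((\na v_u,\na v_w),(G_u^\ep,G_w^\ep)\big),
\]
where $\f=\half\mathcal{B}(\cdot,\cdot)$. The first term tends to $\int_\Omega\f[\na v_u,\na v_w]$. For the second, $\na^\perp\ln\abs{x-\v d_i}\cdot\na^\perp\ln\abs{x-\v d_j}=\na\ln\abs{x-\v d_i}\cdot\na\ln\abs{x-\v d_j}$, whose mixed ($i\neq j$) integrals over $\Omega_\ep$ stay bounded and whose diagonal integrals equal $\int_{\Omega_\ep}\abs{x-\v d_i}^{-2}\,\ud x=2\pi\abs{\ln\ep}+O(1)$, giving $\int_{\Omega_\ep}\f[G_u^\ep,G_w^\ep]=\sum_i\frac{Q(b_u^i,b_w^i)}{4\pi}\abs{\ln\ep}+O(1)$, so the second term tends to $\sum_i\frac{Q(b_u^i,b_w^i)}{4\pi}$. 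For the third, each summand has the form $\int_{\Omega_\ep}\na v_a\cdot G_b^\ep$ with $a,b\in\{u,w\}$; since $G_b^\ep$ is divergence-free, $\int_{\Omega_\ep}\na v_a\cdot G_b^\ep=\int_{\p\Omega_\ep}v_a\,G_b^\ep\cdot\v n\,\ud s$, which is a fixed $O(1)$ number on $\p\Omega$ and on each $\p B_\ep(\v d_i)$ is bounded by $\nm{v_a}_{L^1(\p B_\ep(\v d_i))}\sup_{\p B_\ep(\v d_i)}\abs{G_b^\ep\cdot\v n}$, where the leading part of $G_b^\ep=\na^\perp h_b^\ep$ near $\v d_i$ is tangent to $\p B_\ep(\v d_i)$ so that $\abs{G_b^\ep\cdot\v n}=O(1)$ there, while $\nm{v_a}_{L^1(\p B_\ep(\v d_i))}\ls\nm{v_a}_{L^2(B_\ep(\v d_i))}+\ep\nm{\na v_a}_{L^2(\Omega)}\rt0$ by a scaled trace inequality on $B_\ep(\v d_i)$; hence the cross integral is $O(1)$ and the third term is $O(\abs{\ln\ep}^{-1/2})\rt0$. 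Altogether $\lim_\ep J_\ep^{(0)}[\uu_\ep,\ww_\ep]=J_0^{(0)}[\na v_u,\na v_w]$, which gives (2); combined with the lower bound this is the asserted $\Gamma$-convergence.

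\medskip
\noindent\textbf{Expected main obstacle.} The crux is the sharpness of the annular lower bound: the coupling constant $R$ must reappear in the limit with exactly its coefficient in $J_0^{(0)}$. This dictates discarding precisely the normal components of $(\tilde\uu_\ep,\tilde\ww_\ep)$ on each circle while retaining the full coupled form $Q$ on the tangential components, and applying Jensen to the $\r^2$-valued map $s\mapsto(\tilde\uu_\ep\cdot\t(s),\tilde\ww_\ep\cdot\t(s))$ jointly; treating the two fields separately would produce a strictly worse constant whenever $R\neq0$. The remaining ingredients --- invariance of the circulation along the annulus for $L^2$ curl-free fields (part of the cited framework), weak lower semicontinuity on $\Omega_\delta$, the logarithmic accounting of $\int_{\Omega_\ep}\f[G_u^\ep,G_w^\ep]$, and the scaled trace bound for the cross term --- are routine, the trace estimate being the only point requiring some care.
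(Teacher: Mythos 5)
Your proof is correct, and the lower-bound half takes a genuinely different route from the paper. For the $\liminf$ inequality the paper decomposes the admissible field additively as $\tilde\uu_{\ep}=\sum_i\u_i+\na\uk_{\ep}$ (and likewise for $\tilde\ww_{\ep}$), expands the quadratic energy into six pieces over $\Omega_r$ and the annuli, computes the self-energy of the explicit singular profiles $\u_i,\w_i$ exactly, drops the nonnegative gradient piece, and shows all cross terms vanish after rescaling. You instead avoid the decomposition entirely and run the classical circulation argument on the full field: on each circle $\p B_r(\v d_i)$ the prescribed circulations fix the averages of the tangential components, and Jensen's inequality applied \emph{jointly} to the pair $(\tilde\uu_{\ep}\cdot\t,\tilde\ww_{\ep}\cdot\t)$ for the positive-definite form $Q$ yields $\int_{\p B_r(\v d_i)}\f\,\ud{s}\geq Q(b_u^i,b_w^i)/(4\pi r)$, which integrates to the sharp logarithmic constant including the coupling $R$ --- your remark that a field-by-field treatment would lose (or fail to control) the $R$ cross term is exactly the right point. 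Your approach buys robustness (it needs only curl-freeness and the circulation constraint, not the explicit form of $\u_i,\w_i$, and it sidesteps the paper's bookkeeping of cross terms), at the cost of the explicit term-by-term identification that the paper reuses verbatim in its first-order analysis. The $\limsup$ half is essentially the paper's: the same recovery sequence $\abs{\ln(\ep)}^{1/2}\na v_u+\sum_i\u_i$ (your $\na^{\perp}h_u^{\ep}$ coincides with the paper's $\u_i$), with the same expansion; you merely supply more detail on the cross term via the divergence theorem and a scaled trace estimate, where an $o(\abs{\ln(\ep)}^{1/2})$ bound would already suffice.
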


$\Gamma$-convergence naturally yields the convergence of minimum of energy functionals.
\begin{corollary}(Core Energy)(see Section 3.2)\label{corollary 1.}
Assume that (\ref{assumption}) holds. We have
\begin{eqnarray*}
\inf_{ \uu, \ww} J_{0}^{(0)}[\uu,\ww]=\sum_{i=1}^N\frac{C(b_u^i)^2+K(b_w^i)^2+2R(b_u^i)(b_w^i)}{4\pi}.
\end{eqnarray*}
Assume $(\u_{\ep}',\w_{\ep}')$ is the minimizer of $J_{\ep}^{(0)}$, then we have
\begin{eqnarray*}
J_{\ep}^{(0)}[\u_{\ep}',\w_{\ep}']=E_0+o(1),
\end{eqnarray*}
where the rescaled leading-order energy
\begin{eqnarray}\label{mt 001}
E_0=\sum_{i=1}^N\frac{C(b_u^i)^2+K(b_w^i)^2+2R(b_u^i)(b_w^i)}{4\pi}.
\end{eqnarray}
\end{corollary}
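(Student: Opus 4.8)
The plan is to deduce Corollary~\ref{corollary 1.} directly from the $\Gamma$-convergence result of Theorem~\ref{theorem 3}, together with the compactness of Theorem~\ref{theorem 2.}, using the standard principle that $\Gamma$-convergence plus equi-coercivity implies convergence of minima and of (a subsequence of) minimizers. First I would compute the infimum of the limiting functional $J_0^{(0)}$ explicitly. Since $J_0^{(0)}[\uu,\ww]$ equals $+\infty$ unless $(\uu,\ww)=(\na v_u,\na v_w)$, the infimum is taken over $v_u,v_w\in H^1(\Omega)$, and the functional splits as
\begin{eqnarray*}
\int_{\Omega}\frac{1}{2}\bigg(C\abs{\na v_u}^2+K\abs{\na v_w}^2+2R(\na v_u\cdot\na v_w)\bigg)\ud{x}\ud{y}+\sum_{i=1}^N\frac{C(b_u^i)^2+K(b_w^i)^2+2R(b_u^i)(b_w^i)}{4\pi}.
\end{eqnarray*}
The second (core) term is a fixed constant independent of $(v_u,v_w)$, while the first term is a nonnegative quadratic form in $(\na v_u,\na v_w)$ by the positive-definiteness hypothesis (\ref{assumption}), namely $C,K>0$ and $CK>R^2$. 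Hence the first term is minimized (and equals zero) precisely when $\na v_u=\na v_w=0$, i.e. when $v_u,v_w$ are constants; this choice is admissible. Therefore $\inf_{\uu,\ww}J_0^{(0)}[\uu,\ww]=E_0$ with $E_0$ as in (\ref{mt 001}).

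Next I would transfer this to the $\ep$-level functionals. Let $(\u_\ep',\w_\ep')$ denote the minimizer of $J_\ep^{(0)}$. For the upper bound $\limsup_{\ep\to0}J_\ep^{(0)}[\u_\ep',\w_\ep']\le E_0$, I would invoke part (2) of Theorem~\ref{theorem 3} (the recovery-sequence statement) applied to the minimizing pair of the limit functional, i.e. to $(\uu,\ww)=(\mathbf{0},\mathbf{0})$ (gradients of constants): there is a sequence $(\uu_\ep,\ww_\ep)$ with $J_\ep^{(0)}[\uu_\ep,\ww_\ep]\to J_0^{(0)}[\mathbf{0},\mathbf{0}]=E_0$, and since $(\u_\ep',\w_\ep')$ is a minimizer, $J_\ep^{(0)}[\u_\ep',\w_\ep']\le J_\ep^{(0)}[\uu_\ep,\ww_\ep]$, giving $\limsup_\ep J_\ep^{(0)}[\u_\ep',\w_\ep']\le E_0$. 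For the matching lower bound, observe that this $\limsup$ control shows $\sup_\ep J_\ep^{(0)}[\u_\ep',\w_\ep']\le C_0$ for some constant $C_0$, so Theorem~\ref{theorem 2.} applies: up to a (non-relabelled) subsequence, $(\mathbf{1}_{\Omega_\ep}\u_\ep',\mathbf{1}_{\Omega_\ep}\w_\ep')\rightharpoonup(\na v_u,\na v_w)$ weakly in $L^2(\Omega)$ for some $v_u,v_w\in H^1(\Omega)$. Part (1) of Theorem~\ref{theorem 3} (the liminf inequality) then yields $\liminf_\ep J_\ep^{(0)}[\u_\ep',\w_\ep']\ge J_0^{(0)}[\na v_u,\na v_w]\ge\inf J_0^{(0)}=E_0$. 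Combining the two bounds, $\lim_\ep J_\ep^{(0)}[\u_\ep',\w_\ep']=E_0$ along the subsequence, and since the limit $E_0$ is independent of the subsequence, the full limit exists and equals $E_0$, i.e. $J_\ep^{(0)}[\u_\ep',\w_\ep']=E_0+o(1)$.

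The main subtlety — the only place where care is needed — is the bookkeeping of the indicator functions $\mathbf{1}_{\Omega_\ep}$ and the precise domain of the functionals: $J_\ep^{(0)}$ is defined on $L^2(\Omega)\times L^2(\Omega)$ but its value only involves the restriction to $\Omega_\ep$, and the weak-$L^2(\Omega)$ convergence used in Theorem~\ref{theorem 3} implicitly refers to the extended fields. One must check that the recovery sequence furnished by Theorem~\ref{theorem 3}(2) for the constant pair indeed lies in the effective domain of $J_\ep^{(0)}$ (that is, is of the form $(\tilde\uu_\ep/\abs{\ln\ep}^{1/2},\tilde\ww_\ep/\abs{\ln\ep}^{1/2})$ with $(\tilde\uu_\ep,\tilde\ww_\ep)\in H_0^\ep$), but this is guaranteed by the statement of that theorem. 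A secondary point is confirming the first (bulk) integral in $J_0^{(0)}$ is genuinely coercive and hence minimized at zero: this is immediate from (\ref{assumption}) since the quadratic form with matrix $\begin{pmatrix}C&R\\R&K\end{pmatrix}$ is positive definite, so $\frac12(C|\xi|^2+K|\eta|^2+2R\,\xi\cdot\eta)\ge 0$ for all $\xi,\eta\in\r^2$ with equality iff $\xi=\eta=0$. Beyond these routine verifications there is no real obstacle; the corollary is a direct harvest of the two theorems.
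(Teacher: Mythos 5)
Your proposal is correct and is essentially the paper's own argument: the paper proves this corollary by simply invoking "the basic properties of $\Gamma$-convergence" (i.e., the fundamental theorem that $\Gamma$-convergence plus equi-coercivity yields convergence of minima), which is precisely the chain you spell out — explicit computation of $\inf J_0^{(0)}$ via positive definiteness of the quadratic form, the recovery sequence for the constant pair for the upper bound, and compactness plus the liminf inequality for the lower bound. No substantive difference in route.
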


The zeroth-order $\Gamma$-convergence result tells us the asymptotic behavior of leading-order free energy. However, the rescaling in $J_{\ep}^{(0)}$ suppress $O(1)$ term in the energy. As \cite{Anzellotti.Baldo1993} revealed, more detailed information can be discovered when we get rid of the rescaling and go to first-order  $\Gamma$-convergence.
Define the functional $J_{\ep}^{(1)}: L^2(\Omega)\times L^2(\Omega)\rt[0,\infty]$ as
\begin{eqnarray*}
J_{\ep}^{(1)}\left[\tilde\uu_{\ep},\tilde\ww_{\ep}\right]:=\left\{
\begin{array}{ll}
\ds\int_{\Omega_{\ep}}\frac{1}{2}\bigg(C\abs{\tilde\uu_{\ep}}^2+K\abs{\tilde\ww_{\ep}}^2+2R(\tilde\uu_{\ep}\cdot\tilde\ww_{\ep})\bigg)\ud{x}\ud{y}-\abs{\ln(\ep)}\inf_{ \uu, \ww} J_0^{(0)}[ \uu, \ww]\\
\qquad\qquad\qquad\qquad\qquad\qquad\qquad\qquad\qquad\qquad\qquad\ \ \text{if}\ \ (\tilde\uu_{\ep},\tilde\ww_{\ep})\in H_0^{\ep},\\
\infty\ \ \text{otherwise in}\ \ L^2(\Omega)\times L^2(\Omega).
\end{array}
\right.
\end{eqnarray*}

\begin{theorem}\label{theorem 4}( $1^{st}$-Order $\Gamma$-Convergence )(see Section 3.3)
Assume that (\ref{assumption}) holds. Define the functional $J_0^{(1)}: L^2(\Omega)\times L^2(\Omega)\rt[0,\infty]$ as
\begin{eqnarray*}
J_0^{(1)}[ \tilde\uu, \tilde\ww]:=\left\{
\begin{array}{ll}
E_{\textnormal{self}}+E_{\textnormal{int}}+E_{\textnormal{elastic}}\ \ \text{if}\ \ ( \tilde\uu, \tilde\ww)=\ds\left(\na v_u+\sum_{i=1}^N\u_i, \na v_w+\sum_{i=1}^N\w_i\right),\\
\qquad\qquad\qquad\qquad\qquad\qquad\qquad\qquad\text{for}\ \ \text{some}\ \ (v_u,v_w)\in H^1(\Omega),\\
\infty\qquad\ \ \text{otherwise in}\ \ L^2(\Omega)\times L^2(\Omega),
\end{array}
\right.
\end{eqnarray*}
where
\begin{eqnarray*}
E_{\textnormal{self}}&:=&\sum_{i=1}^N\int_{\Omega\backslash B_r(\v d_i)}\frac{1}{2}\bigg(C\abs{\u_{i}}^2+K\abs{\w_{i}}^2+2R(\u_{i}\cdot\w_{i})\bigg)\ud{x}\ud{y}\\
&&+\sum_{i=1}^N\frac{(C(b_u^i)^2+K(b_w^i)^2+2R(b_u^i)(b_w^i))}{4\pi}\ln(r),\no\\
E_{\textnormal{int}}&:=&\sum_{i=1}^{N-1}\sum_{j=i}^N\int_{\Omega}\bigg(C(\u_i\cdot\u_j)+K(\w_i\cdot\w_j)+R(\u_i\cdot\w_j)+R(\u_j\cdot\w_i)\bigg),\\
E_{\textnormal{elastic}}&:=&J[\na v_u,\na v_w]+\sum_{i=1}^N\int_{\p\Omega}\bigg(v_u(C\u_i+R\w_i)+v_w(K\w_i+R\u_i)\bigg)\cdot n\ud{s}.
\end{eqnarray*}
Then
\begin{enumerate}
\item
For any sequence of pairs $(\tilde\uu_{\ep},\tilde\ww_{\ep})\in L^2(\Omega)\times L^2(\Omega)$ such that $\left(\tilde\uu_{\ep},\tilde\ww_{\ep}\right)\rightharpoonup(\tilde\uu,\tilde\ww)$ in weak-$L^2(\Omega)$, we have $\liminf_{\ep\rt0}J_{\ep}^{(1)}[\tilde\uu_{\ep},\tilde\ww_{\ep}]\geq J_{0}^{(1)}[\na v_u,\na v_w]$.
\item
There exists a sequence of pairs $(\tilde\uu_{\ep},\tilde\ww_{\ep})\in L^2(\Omega)\times L^2(\Omega)$ such that $\left(\tilde\uu_{\ep},\tilde\ww_{\ep}\right)\rightharpoonup(\tilde\uu,\tilde\ww)$ in weak-$L^2(\Omega)$, we have $\limsup_{\ep\rt0}J_{\ep}^{(1)}[\tilde\uu_{\ep},\tilde\ww_{\ep}]\leq J_{0}^{(1)}[\na v_u,\na v_w]$,
\end{enumerate}
which means
\begin{eqnarray*}
J_{\ep}^{(1)}[\tilde\uu_{\ep},\tilde\ww_{\ep}]\rt J_0^{(1)}[\tilde\uu,\tilde\ww],
\end{eqnarray*}
in the sense of $\Gamma$-convergence in weak-$L^2(\Omega)$.
\end{theorem}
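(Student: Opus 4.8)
The plan is to obtain both the $\liminf$ and the $\limsup$ inequalities from one \textbf{energy-splitting identity}, following the standard renormalized-energy scheme. Let $\u_i:=\dfrac{b_u^i}{2\pi}\na\theta_i$ and $\w_i:=\dfrac{b_w^i}{2\pi}\na\theta_i$, with $\theta_i$ the multivalued polar angle centred at $\v d_i$; these are the canonical singular fields (precisely the choice for which $E_{\textnormal{self}},E_{\textnormal{int}},E_{\textnormal{elastic}}$ take the stated form). Since $\theta_i$ is harmonic and $\na\theta_i$ is tangential to every circle about $\v d_i$, the fields $\u_i,\w_i$ are curl-free and divergence-free on $\Omega\backslash\{\v d_i\}$, carry circulations $b_u^i,b_w^i$ about $\v d_i$, and the combinations $C\u_i+R\w_i$ and $R\u_i+K\w_i$ are divergence-free with \emph{vanishing normal component} on every $\p B_\ep(\v d_i)$. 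Fix $r>0$ so small that $B_r(\v d_1),\dots,B_r(\v d_N)$ are pairwise disjoint and contained in $\Omega$. The first step is a \textbf{decomposition lemma}: every $(\tilde\uu_\ep,\tilde\ww_\ep)\in H_0^\ep$ can be written, uniquely up to additive constants, as $\tilde\uu_\ep=\sum_i\u_i+\na\phi_\ep$, $\tilde\ww_\ep=\sum_i\w_i+\na\psi_\ep$ with $\phi_\ep,\psi_\ep\in H^1(\Omega_\ep)$, because subtracting the canonical fields annihilates all periods and leaves an exact gradient. The weak-$L^2(\Omega)$ convergence in the statement is to be read through these regular parts: $\na\phi_\ep\rightharpoonup\na v_u$ and $\na\psi_\ep\rightharpoonup\na v_w$.

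Inserting the decomposition into the quadratic form $\f$ and integrating over $\Omega_\ep$ gives the exact identity $\int_{\Omega_\ep}\f[\tilde\uu_\ep,\tilde\ww_\ep]=A_\ep+B_\ep+C_\ep$, where $A_\ep=\int_{\Omega_\ep}\f[\sum_i\u_i,\sum_i\w_i]$, $C_\ep=\int_{\Omega_\ep}\f[\na\phi_\ep,\na\psi_\ep]$, and $B_\ep$ is the bilinear cross term. On each annulus $B_r(\v d_i)\backslash B_\ep(\v d_i)$ one computes $\f[\u_i,\w_i]=\dfrac{C(b_u^i)^2+K(b_w^i)^2+2Rb_u^ib_w^i}{8\pi^2\rho^2}$, whose integral is $\dfrac{C(b_u^i)^2+K(b_w^i)^2+2Rb_u^ib_w^i}{4\pi}\ln(r/\ep)$; summing over $i$ and writing $-\ln\ep=\abs{\ln\ep}$ splits $A_\ep$ into $\abs{\ln\ep}E_0$, the $\ln r$ summand of $E_{\textnormal{self}}$, the convergent diagonal remainder $\sum_i\int_{\Omega\backslash B_r(\v d_i)}\f[\u_i,\w_i]$, and a convergent off-diagonal part whose limit is the pairwise integrals defining $E_{\textnormal{int}}$ (these integrands lie in $L^1(\Omega)$, so excising the cores costs $o(1)$). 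Integrating $B_\ep$ by parts and using $\na\cdot(C\u_i+R\w_i)=\na\cdot(R\u_i+K\w_i)=0$ kills all interior terms; the resulting circle integrals over $\p B_\ep(\v d_j)$ vanish identically when $j=i$ (normal-component property) and are $o(1)$ when $j\ne i$ (a rescaled trace inequality on the annuli near the cores), while the $\p\Omega$-part is $\mathcal B_\ep:=\sum_i\int_{\p\Omega}\big(\phi_\ep(C\u_i+R\w_i)+\psi_\ep(R\u_i+K\w_i)\big)\cdot n\,\ud{s}$. Hence
\[
\int_{\Omega_\ep}\f[\tilde\uu_\ep,\tilde\ww_\ep]-\abs{\ln\ep}E_0=E_{\textnormal{self}}+E_{\textnormal{int}}+C_\ep+\mathcal B_\ep+o(1).
\]

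For the \textbf{lower bound}, assume $J_\ep^{(1)}[\tilde\uu_\ep,\tilde\ww_\ep]\le C_0$; normalise $\phi_\ep,\psi_\ep$ to have zero mean on $\Omega_\ep$ (which, since $\na\theta_i$ has zero total flux through $\p\Omega$, leaves $\mathcal B_\ep$ unchanged). After the integration by parts, $\abs{\mathcal B_\ep}$ is bounded by $C_2(1+\|\na\phi_\ep\|_{L^2}+\|\na\psi_\ep\|_{L^2})$ — \emph{linearly} in the gradient norms, with no divergent $\abs{\ln\ep}^{1/2}$ factor, which is exactly what the divergence-free, normal-vanishing structure of the $\u_i,\w_i$ buys us — so positive-definiteness of $\f$ applied to $C_\ep$ forces the uniform bound $\|\na\phi_\ep\|_{L^2(\Omega_\ep)}+\|\na\psi_\ep\|_{L^2(\Omega_\ep)}\le C_1$. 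Extending $\phi_\ep,\psi_\ep$ across the shrinking holes and passing to a subsequence, $\phi_\ep\rightharpoonup v_u$, $\psi_\ep\rightharpoonup v_w$ in $H^1(\Omega)$; thus $\i_{\Omega_\ep}\na\phi_\ep\rightharpoonup\na v_u$, $\i_{\Omega_\ep}\na\psi_\ep\rightharpoonup\na v_w$ in $L^2(\Omega)$ and the traces converge in $L^2(\p\Omega)$, whence $\mathcal B_\ep\to\sum_i\int_{\p\Omega}\big(v_u(C\u_i+R\w_i)+v_w(R\u_i+K\w_i)\big)\cdot n\,\ud{s}$, while $\liminf C_\ep\ge J[\na v_u,\na v_w]$ by weak lower semicontinuity of the nonnegative quadratic functional $\int_\Omega\f$. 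The sum of these two limits is $E_{\textnormal{elastic}}$, so $\liminf_\ep\big(\int_{\Omega_\ep}\f-\abs{\ln\ep}E_0\big)\ge E_{\textnormal{self}}+E_{\textnormal{int}}+E_{\textnormal{elastic}}=J_0^{(1)}$. For the \textbf{recovery sequence}, given a target $\big(\na v_u+\sum_i\u_i,\na v_w+\sum_i\w_i\big)$ with $v_u,v_w\in H^1(\Omega)$, take $(\tilde\uu_\ep,\tilde\ww_\ep):=\i_{\Omega_\ep}\big(\na v_u+\sum_i\u_i,\na v_w+\sum_i\w_i\big)$: this lies in $H_0^\ep$, its regular parts are the fixed fields $\na v_u,\na v_w$, and since the splitting identity is an \emph{equality}, together with $\int_{\Omega_\ep}\f[\na v_u,\na v_w]\to J[\na v_u,\na v_w]$ and $\mathcal B_\ep\equiv\sum_i\int_{\p\Omega}\big(v_u(C\u_i+R\w_i)+v_w(R\u_i+K\w_i)\big)\cdot n\,\ud{s}$, it gives $\lim_\ep\big(\int_{\Omega_\ep}\f-\abs{\ln\ep}E_0\big)=J_0^{(1)}$ — sharper than required.

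The \textbf{main obstacle} is the uniform $H^1$ control of the regular parts: a crude Cauchy--Schwarz on $B_\ep$ yields only $\abs{B_\ep}\ls\abs{\ln\ep}^{1/2}\big(\|\na\phi_\ep\|_{L^2}+\|\na\psi_\ep\|_{L^2}\big)$, which cannot be absorbed against the $O(1)$ energy budget; one must first integrate by parts and use that $C\u_i+R\w_i$ and $R\u_i+K\w_i$ are divergence-free and normal-vanishing on $\p B_\ep(\v d_i)$, so that $B_\ep$ collapses to genuinely $O(1)$-scale boundary integrals. Secondary points are the careful bookkeeping of the $o(1)$ remainders near the cores (the $\p B_\ep(\v d_j)$-integrals for $j\ne i$ and the excised-ball integrals) and the uniformity in $\ep$ of the extension and Poincar\'e inequalities on the perforated domains $\Omega_\ep$; a reassuring consistency check is that $J_0^{(1)}$ does not depend on the auxiliary radius $r$, since $\frac{\ud{}}{\ud{r}}E_{\textnormal{self}}=0$.
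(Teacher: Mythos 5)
Your proposal is correct and follows essentially the same route as the paper: the same decomposition $\tilde{\mathfrak{U}}_{\ep}=\sum_i\u_i+\na\phi_{\ep}$, the same four-way splitting of the energy (singular self-part yielding $\abs{\ln\ep}E_0+E_{\textnormal{self}}$, off-diagonal part yielding $E_{\textnormal{int}}$, weak lower semicontinuity for the elastic part, integration by parts on the cross term using that $C\u_i+R\w_i$ and $R\u_i+K\w_i$ are divergence-free with vanishing normal component on $\p B_{\ep}(\v d_i)$), and the same recovery sequence ${\bf 1}_{\Omega_{\ep}}\big(\na v_u+\sum_i\u_i,\na v_w+\sum_i\w_i\big)$. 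The only (welcome) refinement is that you derive the uniform $H^1$ bound on the regular parts from the $O(1)$ energy budget via the linear-in-gradient estimate of the cross term, where the paper simply invokes boundedness of the weakly convergent sequence.
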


Similarly, we have a better approximation of energy functionals.
\begin{corollary}(Renormalized Energy)(see Section 3.3)\label{corollary 2.}
Assume that (\ref{assumption}) holds. We have
\begin{eqnarray*}
\inf_{ \tilde\uu, \tilde\ww} J_{0}^{(1)}[\tilde\uu,\tilde\ww]=F_{\textnormal{self}}+F_{\textnormal{int}}+F_{\textnormal{elastic}},
\end{eqnarray*}
where
\begin{eqnarray}\label{mt 002}
F_{\textnormal{self}}:&=&\sum_{i=1}^N\int_{\Omega\backslash B_r(\v d_i)}\frac{1}{2}\bigg(C\abs{\u_{i}}^2+K\abs{\w_{i}}^2+2R(\u_{i}\cdot\w_{i})\bigg)\ud{x}\ud{y}\\
&&+\sum_{i=1}^N\frac{(C(b_u^i)^2+K(b_w^i)^2+2R(b_u^i)(b_w^i))}{4\pi}\ln(r),\no\\
F_{\textnormal{int}}:&=&\sum_{i=1}^{N-1}\sum_{j=i}^N\int_{\Omega}\bigg(C(\u_i\cdot\u_j)+K(\w_i\cdot\w_j)+R(\u_i\cdot\w_j)+R(\u_j\cdot\w_i)\bigg),\no\\
F_{\textnormal{elastic}}:&=&J[\na u_{0},\na w_{0}]+\sum_{i=1}^N\int_{\p\Omega}\bigg(u_{0}(C\u_i+R\w_i)+w_{0}(K\w_i+R\u_i)\bigg)\cdot n\ud{s},\no
\end{eqnarray}
in which $(u_0,w_0)$ is the minimizer of
\begin{eqnarray*}
I[v_u,v_w]=J[\na v_u,\na v_w]+\sum_{i=1}^N\int_{\p\Omega}\bigg(v_u(C\u_i+R\w_i)+v_w(K\w_i+R\u_i)\bigg)\cdot n\ud{s}.
\end{eqnarray*}
Assume $(\tilde\u_{\ep}',\tilde\w_{\ep}')\in H_0^{\ep}$ is the minimizer of $J_{\ep}^{(1)}$, then we have
\begin{eqnarray*}
J_{\ep}^{(1)}[\tilde\u_{\ep}',\tilde\w_{\ep}']=F_{\textnormal{self}}+F_{\textnormal{int}}+F_{\textnormal{elastic}}+o(1).
\end{eqnarray*}
\end{corollary}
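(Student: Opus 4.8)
The plan is to deduce Corollary~\ref{corollary 2.} from the first-order $\Gamma$-convergence result of Theorem~\ref{theorem 4} together with the standard fundamental theorem of $\Gamma$-convergence. First I would observe that $J_0^{(1)}$ is finite only on the ``structured'' pairs $(\tilde\uu,\tilde\ww)=(\na v_u+\sum_i\u_i,\na v_w+\sum_i\w_i)$, so that minimizing $J_0^{(1)}$ over $L^2(\Omega)\times L^2(\Omega)$ reduces to minimizing over $(v_u,v_w)\in H^1(\Omega)\times H^1(\Omega)$. On this class, the terms $E_{\textnormal{self}}$ and $E_{\textnormal{int}}$ do not depend on $(v_u,v_w)$ at all (they involve only the fixed singular fields $\u_i,\w_i$ associated with the prescribed dislocation data), so they pull out of the infimum as the constants $F_{\textnormal{self}}$ and $F_{\textnormal{int}}$. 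The only $(v_u,v_w)$-dependent piece is $E_{\textnormal{elastic}}$, which is exactly the functional $I[v_u,v_w]$ displayed in the statement; hence $\inf J_0^{(1)}=F_{\textnormal{self}}+F_{\textnormal{int}}+\inf_{v_u,v_w}I[v_u,v_w]$, and I would identify the last term with $F_{\textnormal{elastic}}=I[u_0,w_0]$.

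The next step is to verify that $I$ actually attains its minimum, so that $(u_0,w_0)$ is well defined. Writing $I[v_u,v_w]=J[\na v_u,\na v_w]+L(v_u,v_w)$ with $L$ the boundary linear functional, I note that $J[\na v_u,\na v_w]=\int_\Omega\f[\na v_u,\na v_w]$ is, by assumption (\ref{assumption}), a coercive quadratic form in $(\na v_u,\na v_w)$ (the $2\times2$ block matrix with entries $C,K,R$ is positive definite, giving $\f\gtrsim|\na v_u|^2+|\na v_w|^2$), and $L$ is a bounded linear functional of the traces of $v_u,v_w$. Modulo the obvious gauge invariance under adding constants to $v_u$ and $v_w$ (which one fixes by imposing, e.g., zero mean on $\p\Omega$), $I$ is a strictly convex, coercive, lower semicontinuous functional on the relevant closed subspace of $H^1\times H^1$, so the direct method yields a unique minimizer $(u_0,w_0)$; equivalently $(u_0,w_0)$ solves the associated Euler--Lagrange system with a Neumann-type boundary condition built from the $\u_i,\w_i$. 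This then gives the formula for $\inf J_0^{(1)}$ exactly as stated.

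For the second assertion, I would invoke the general principle that if $J_\ep^{(1)}$ $\Gamma$-converges to $J_0^{(1)}$ in weak-$L^2$ and the minimizers (or near-minimizers) of $J_\ep^{(1)}$ are precompact in this topology, then $\min J_\ep^{(1)}\to\min J_0^{(1)}$ and limit points of minimizers are minimizers of $J_0^{(1)}$. Precompactness here comes from Theorem~\ref{theorem 2.}: a minimizer $(\tilde\u_\ep',\tilde\w_\ep')\in H_0^\ep$ of $J_\ep^{(1)}$ has $J_\ep^{(1)}[\tilde\u_\ep',\tilde\w_\ep']$ bounded above (compare with a fixed recovery sequence from part (2) of Theorem~\ref{theorem 4}), which translates into a uniform bound on $J_\ep^{(0)}$ of the rescaled fields, hence weak-$L^2$ compactness of $(\i_{\Omega_\ep}\tilde\u_\ep'/|\ln\ep|^{1/2},\i_{\Omega_\ep}\tilde\w_\ep'/|\ln\ep|^{1/2})$; since the energy excess is itself $O(1)$, one upgrades this to compactness of the unrescaled $(\i_{\Omega_\ep}\tilde\u_\ep',\i_{\Omega_\ep}\tilde\w_\ep')$ after subtracting the known singular parts $\sum_i\u_i,\sum_i\w_i$. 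Combining, $\liminf J_\ep^{(1)}[\tilde\u_\ep',\tilde\w_\ep']\ge\min J_0^{(1)}$ by the $\liminf$ inequality, while $\limsup J_\ep^{(1)}[\tilde\u_\ep',\tilde\w_\ep']\le\limsup J_\ep^{(1)}[\text{recovery}]\le\min J_0^{(1)}$ by minimality of $(\tilde\u_\ep',\tilde\w_\ep')$ and part (2); hence $J_\ep^{(1)}[\tilde\u_\ep',\tilde\w_\ep']\to\min J_0^{(1)}=F_{\textnormal{self}}+F_{\textnormal{int}}+F_{\textnormal{elastic}}$, i.e. the claimed $o(1)$ estimate.

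The main obstacle I anticipate is the compactness/precompactness step in the \emph{unrescaled} (first-order) setting: unlike the zeroth-order case, one cannot simply quote Theorem~\ref{theorem 2.} verbatim, because subtracting $|\ln\ep|\inf J_0^{(0)}$ leaves a delicate $O(1)$ balance, and one must show that the ``core annulus'' contributions near each $\v d_i$ are captured exactly by the singular fields $\u_i,\w_i$ and the logarithmic renormalization $\ln r$ term, with no residual blow-up as $\ep\to0$. This requires a careful local expansion near each dislocation (isolating the leading $\frac{1}{r}$-type singularity of the minimizer on $B_r(\v d_i)\setminus B_\ep(\v d_i)$ and matching it with the explicit self-field) and a cancellation argument showing the cross terms between the regular part $\na v_u,\na v_w$ and the singular parts reduce to the boundary integrals in $E_{\textnormal{elastic}}$ via integration by parts. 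All of this machinery, however, is exactly what is developed in the proof of Theorem~\ref{theorem 4}, so once that theorem is in hand the corollary follows by the soft $\Gamma$-convergence argument sketched above.
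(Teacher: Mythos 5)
Your proposal is correct and follows essentially the same route as the paper, which gives no detailed argument here but simply invokes Theorem \ref{theorem 4}, the basic property of $\Gamma$-convergence (convergence of minima), and remarks that existence and uniqueness of $(u_0,w_0)$ follow as in Sections 2.3--2.4 (your direct-method argument). Your additional observations --- that $E_{\textnormal{self}}$ and $E_{\textnormal{int}}$ are constants so the infimum reduces to $\inf I$, and that the first-order compactness deserves care --- fill in details the paper leaves implicit but do not constitute a different approach.
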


As corollaries, we can now state a characterization of the structure of minimizer $(\u_{\ep},\w_{\ep})$ and energy $J_{\ep}[\u_{\ep},\w_{\ep}]$ in (\ref{problem.}).
\begin{theorem}\label{theorem 1}(Minimizer Structure)(see Section 3.4)
Assume that (\ref{assumption}) holds. The problem (\ref{problem.}) admits a unique solution
\begin{eqnarray*}
\u_{\ep}=\sum_{i=1}^N\u_i+\na u_{\ep},\qquad \w_{\ep}=\sum_{i=1}^N\w_i+\na w_{\ep},
\end{eqnarray*}
where
\begin{eqnarray*}
\u_i&=&\frac{b_u^i}{2\pi}\frac{1}{(x-x_i)^2+(y-y_i)^2}\bigg(-(y-y_i),(x-x_i)\bigg),\\
\w_i&=&\frac{b_w^i}{2\pi}\frac{1}{(x-x_i)^2+(y-y_i)^2}\bigg(-(y-y_i),(x-x_i)\bigg),
\end{eqnarray*}
and $(u_{\ep},w_{\ep})$ is the unique minimizer of
\begin{eqnarray*}
I_{\ep}[u_{\ep},w_{\ep}]:&=&J_{\ep}[\na u_{\ep},\na w_{\ep}]+\sum_{i=1}^N\int_{\p\Omega}\bigg(u_{\ep}(C\u_i+R\w_i)+w_{\ep}(K\w_i+R\u_i)\bigg)\cdot n\ud{s}\no\\
&&-\sum_{i=1}^N\sum_{j\neq i}\int_{\p B_{\ep}(x_i,y_i)}\bigg(u_{\ep}(C\u_j+R\w_j)+w_{\ep}(K\w_j+R\u_j)\bigg)\cdot n\ud{s},\no
\end{eqnarray*}
subject to
$\displaystyle\int_{B}u_{\ep}\ud{x}\ud{y}=0$ and $\displaystyle\int_{B}w_{\ep}\ud{x}\ud{y}=0$
for some ball $B\subset\Omega_{\ep}$, with $n$ the outward unit normal vector on $\p\Omega$.

Furthermore, $(\u_{\ep},\w_{\ep})$ converges in weak-$L^2(\Omega)$ as $\ep\rt0$ to $(\u_0,\w_0)$ where
\begin{eqnarray*}
\u_{0}=\sum_{i=1}^N\u_i+\na u_{0},\qquad \w_{0}=\sum_{i=1}^N\w_i+\na w_{0}.
\end{eqnarray*}
and $[u_0,w_0]$ is the unique minimizer of
\begin{eqnarray*}
I_{0}[u_0,w_0]&=&J[\na u_{0},\na w_{0}]+\sum_{i=1}^N\int_{\p\Omega}\bigg(u_{0}(C\u_i+R\w_i)+w_{0}(K\w_i+R\u_i)\bigg)\cdot n\ud{s},
\end{eqnarray*}
subject to
$\displaystyle\int_{B}u_0\ud{x}\ud{y}=0$ and $\displaystyle\int_{B}w_0\ud{x}\ud{y}=0$
for some ball $B\subset\Omega_{\ep}$
\end{theorem}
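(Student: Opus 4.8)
The plan is to reduce the constrained minimization (\ref{problem.}) to an unconstrained strictly convex problem by subtracting the explicit singular fields, to solve that by the direct method, and then to pass to the limit $\ep\rt0$ using uniqueness of the limiting minimizer. First I would record that each $\u_i$ equals $\frac{b_u^i}{2\pi}\na\theta_i$, where $\theta_i$ is the polar angle about $\v d_i$; hence $\u_i$ is curl- and divergence-free in $\r^2\setminus\{\v d_i\}$, carries circulation $b_u^i$ around $\v d_i$ and none around the other dislocations, and is tangent to every circle centred at $\v d_i$, so $\u_i\cdot n=0$ on $\p B_\ep(\v d_i)$; likewise for $\w_i$. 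Given $(\uu,\ww)\in H_0^\ep$, the differences $\uu-\sum_i\u_i$ and $\ww-\sum_i\w_i$ are then curl-free in $\Omega_\ep$ with vanishing circulation around each of the $N$ holes $\p B_\ep(\v d_i)$; since $\Omega$ is simply connected those are the only generators of $H_1(\Omega_\ep)$, so each difference is the gradient of an $H^1(\Omega_\ep)$ function, unique up to an additive constant which the normalization $\int_B u_\ep=\int_B w_\ep=0$ fixes. This produces the claimed decomposition.

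Next I would insert the decomposition into $\int_{\Omega_\ep}\f[\uu,\ww]$ and expand the quadratic form: the purely singular term $\int_{\Omega_\ep}\f[\sum_i\u_i,\sum_i\w_i]$ depends only on $\ep$ and the data, while the mixed term is $\int_{\Omega_\ep}\big(\na u_\ep\cdot(C\u^*+R\w^*)+\na w_\ep\cdot(K\w^*+R\u^*)\big)$ with $\u^*:=\sum_i\u_i$, $\w^*:=\sum_i\w_i$. Since $C\u^*+R\w^*$ and $K\w^*+R\u^*$ are divergence-free in $\Omega_\ep$, an integration by parts — legitimate on the $L^2$ curl-free space by the trace theory of \cite{Cermelli.Gurtin1999} and \cite{Cermelli.Leoni2005} — turns the mixed term into boundary integrals over $\p\Omega$ and the core circles; the self-contribution of $\u_i,\w_i$ on $\p B_\ep(\v d_i)$ drops by the tangency above, and what survives is exactly the boundary terms of $I_\ep$. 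Hence $\int_{\Omega_\ep}\f[\uu,\ww]=\mathrm{const}(\ep)+I_\ep[u_\ep,w_\ep]$, so (\ref{problem.}) is equivalent to minimizing $I_\ep$ over mean-zero pairs in $H^1(\Omega_\ep)\times H^1(\Omega_\ep)$ (which, via $\int_{\Omega_\ep}\na\cdot(C\u^*+R\w^*)=0$, one checks is insensitive to the gauge). For existence and uniqueness at fixed $\ep$ I would run the direct method: by (\ref{assumption}) the density $\f$ is positive definite, so $J_\ep[\na u,\na w]\gs\nm{\na u}_{L^2(\Omega_\ep)}^2+\nm{\na w}_{L^2(\Omega_\ep)}^2$; together with the Poincar\'e inequality on the connected Lipschitz domain $\Omega_\ep$ (using the gauge) and the trace theorem to absorb the linear boundary terms, $I_\ep$ is coercive and weakly lower semicontinuous (convex plus linear), and strict convexity in $(\na u,\na w)$ gives uniqueness; transporting this through the bijection of the previous paragraph yields the first assertion.

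For the passage to the limit, the identity of the previous paragraph gives $I_\ep[u_\ep,w_\ep]=\min_{H_0^\ep}\int_{\Omega_\ep}\f-\int_{\Omega_\ep}\f[\u^*,\w^*]$, and by Corollary~\ref{corollary 2.} the two terms on the right both expand as $\abs{\ln\ep}E_0+O(1)$ with the divergent parts cancelling, so $I_\ep[u_\ep,w_\ep]$ stays bounded (equivalently, $I_\ep[u_\ep,w_\ep]\le I_\ep[0,0]=0$). Uniform coercivity of $I_\ep$ then bounds $\nm{u_\ep}_{H^1}+\nm{w_\ep}_{H^1}$ uniformly in $\ep$; fixing a reference subdomain for small $\ep$, I extract a weak subsequential limit $(u_\ep,w_\ep)\rightharpoonup(u_0,w_0)$ in $H^1$, note $\Omega_\ep\uparrow\Omega$ and that the core boundary integrals are $O(\ep)\rt0$, and pass to the limit in $I_\ep$ (liminf from weak lower semicontinuity, a matching upper bound by testing with $(u_0,w_0)$) to conclude that $(u_0,w_0)$ minimizes $I_0$. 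Uniqueness of the $I_0$-minimizer (the same convexity argument on $\Omega$) upgrades this to convergence of the whole family, and since $\u_i,\w_i$ do not depend on $\ep$, $(\u_\ep,\w_\ep)=(\sum_i\u_i+\na u_\ep,\sum_i\w_i+\na w_\ep)\rightharpoonup(\u_0,\w_0)$ in weak-$L^2$.

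The step I expect to fight hardest is the uniform-in-$\ep$ coercivity used for compactness: the naive trace inequality on the shrinking circle $\p B_\ep(\v d_i)$ has a constant that degenerates as $\ep\rt0$, so one must instead use that $C\u_j+R\w_j$ ($j\ne i$) is smooth and divergence-free in a fixed punctured neighbourhood of $\v d_i$, which lets the core circle be replaced by a fixed one before estimating — or else bypass the issue via the Corollary~\ref{corollary 2.} bound as above. The other careful point is the integration by parts of the second paragraph for $L^2$ curl-free fields with only distributional tangential/normal traces on the cores, but this is supplied by the div--curl trace framework already cited.
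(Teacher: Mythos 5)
Your proposal is correct, and its first half (the decomposition $\uu=\sum_i\u_i+\na\uk$, $\ww=\sum_i\w_i+\na\wk$ via the topology of $\Omega_{\ep}$ and the circulation conditions, the expansion of the energy, the integration by parts producing $I_{\ep}$, and the direct method with Poincar\'{e} and coercivity for existence and uniqueness at fixed $\ep$) is essentially the paper's own route, carried out in Sections 2.3--2.4 and recorded as Theorem \ref{prelim} and Lemma \ref{mini lemma}. Where you genuinely diverge is the passage to the limit: the paper simply invokes the first-order $\Gamma$-convergence Theorem \ref{theorem 4} together with the compactness Theorem \ref{compact theorem} and the fundamental property that minimizers of $J_{\ep}^{(1)}$ converge to minimizers of $J_0^{(1)}$, whereas you re-derive the convergence directly at the level of the reduced functionals $I_{\ep}\rt I_0$ (uniform $H^1$ bound from $I_{\ep}[u_{\ep},w_{\ep}]\leq I_{\ep}[0,0]=0$ plus coercivity, weak subsequential limits, lower semicontinuity, and comparison against competitors restricted to $\Omega_{\ep}$, with uniqueness of the $I_0$-minimizer upgrading to convergence of the whole family). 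Your route is more self-contained and makes the weak-$H^1$ compactness of $(u_{\ep},w_{\ep})$ explicit, at the cost of essentially reproving a special case of the $\Gamma$-convergence machinery; the paper's route is shorter but only because the heavy lifting was already done in Theorem \ref{theorem 4}. Two small points to tighten: the upper-bound half of your limit argument should test $I_{\ep}$ against an arbitrary competitor $(v,w)\in H^1(\Omega)$ restricted to $\Omega_{\ep}$ (not only against $(u_0,w_0)$) to conclude $I_0[u_0,w_0]\leq I_0[v,w]$; and your concern about traces on the shrinking circles is legitimate but resolved exactly as you indicate, by using that $C\u_j+R\w_j$ is smooth and divergence-free in a fixed neighbourhood of $\v d_i$ for $j\neq i$ so the core integral can be transferred to a fixed circle --- the paper sidesteps this by asserting the $\ep$-independent trace bound (\ref{tt 2}).
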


\begin{theorem}\label{theorem 2}(Energy Structure)(see Section 3.4)
Assume that (\ref{assumption}) holds. We have
\begin{eqnarray*}
J_{\ep}[\u_{\ep},\w_{\ep}]=\int_{\Omega_{\ep}}\f[\u_{\ep},\w_{\ep}]\ud{x}\ud{y}&=&E_0\ln\bigg(\frac{1}{\ep}\bigg)+F+o(1),
\end{eqnarray*}
where the core energy $E_0$ is defined in (\ref{mt 001})
and the renormalized energy $F=F_{\textnormal{self}}+F_{\textnormal{int}}+F_{\textnormal{elastic}}$ is defined in (\ref{mt 002}).
\end{theorem}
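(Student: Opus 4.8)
The plan is to read off Theorem~\ref{theorem 2} from the first-order $\Gamma$-convergence analysis, essentially by unwinding the definition of $J_\ep^{(1)}$ and identifying the minimizer $(\u_\ep,\w_\ep)$ of~(\ref{problem.}) with the one appearing in Corollary~\ref{corollary 2.}. First I would recall from Theorem~\ref{theorem 1} that~(\ref{problem.}) has a unique minimizer $(\u_\ep,\w_\ep)\in H_0^\ep$, so that $J_\ep[\u_\ep,\w_\ep]=\min_{(\uu,\ww)\in H_0^\ep}\int_{\Omega_\ep}\f[\uu,\ww]\,\ud{x}\,\ud{y}$.

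Next I would compare $J_\ep$ and $J_\ep^{(1)}$ on the admissible class. For $(\tilde\uu,\tilde\ww)\in H_0^\ep$ the definition of $J_\ep^{(1)}$ together with~(\ref{it temp 11}) gives
\[
J_\ep^{(1)}[\tilde\uu,\tilde\ww]=J_\ep[\tilde\uu,\tilde\ww]-\abs{\ln(\ep)}\,\inf_{\uu,\ww}J_0^{(0)}[\uu,\ww],
\]
while $J_\ep^{(1)}=+\infty$ off $H_0^\ep$. Since the subtracted quantity is a constant for fixed $\ep$, minimizing $J_\ep^{(1)}$ over $L^2(\Omega)\times L^2(\Omega)$ is equivalent to minimizing $\int_{\Omega_\ep}\f$ over $H_0^\ep$; hence the pair $(\tilde\u_\ep',\tilde\w_\ep')$ in Corollary~\ref{corollary 2.} coincides with $(\u_\ep,\w_\ep)$. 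Using Corollary~\ref{corollary 1.} to evaluate $\inf_{\uu,\ww}J_0^{(0)}[\uu,\ww]=E_0$ and the identity $\abs{\ln(\ep)}=\ln(1/\ep)$ for $\ep\in(0,1)$, the displayed relation rearranges to
\[
J_\ep[\u_\ep,\w_\ep]=\int_{\Omega_\ep}\f[\u_\ep,\w_\ep]\,\ud{x}\,\ud{y}=E_0\ln\!\left(\frac{1}{\ep}\right)+J_\ep^{(1)}[\u_\ep,\w_\ep].
\]

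Finally I would invoke Corollary~\ref{corollary 2.}, which asserts $J_\ep^{(1)}[\tilde\u_\ep',\tilde\w_\ep']=F_{\textnormal{self}}+F_{\textnormal{int}}+F_{\textnormal{elastic}}+o(1)=F+o(1)$ as $\ep\rt0$; substituting this into the previous display yields $J_\ep[\u_\ep,\w_\ep]=E_0\ln(1/\ep)+F+o(1)$, which is exactly the claim.

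I do not expect a substantive obstacle at this stage, because the analytical work — compactness of energy-bounded sequences (Theorem~\ref{theorem 2.}), the $\Gamma$-$\liminf$ inequality extracting the $\abs{\ln\ep}$-divergent core contribution, and the recovery-sequence construction producing the sharp constant $F$ — has already been carried out in Theorems~\ref{theorem 3} and~\ref{theorem 4} and Corollaries~\ref{corollary 1.}--\ref{corollary 2.}. The only delicate points are bookkeeping: checking that adding the $\ep$-dependent constant $\abs{\ln\ep}\,E_0$ leaves the minimizer unchanged, so the convergence-of-minima statement of Corollary~\ref{corollary 2.} may legitimately be applied to the specific sequence $(\u_\ep,\w_\ep)$ rather than to an abstract minimizing sequence; and tracking the sign of $\ln\ep$ so that the divergent term emerges as $E_0\ln(1/\ep)$ with coefficient $E_0=\sum_{i=1}^N\frac{C(b_u^i)^2+K(b_w^i)^2+2R(b_u^i)(b_w^i)}{4\pi}$. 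As a consistency check I would confirm that dividing through by $\abs{\ln\ep}$ and passing to the limit recovers the zeroth-order statement $J_\ep^{(0)}[\u_\ep',\w_\ep']\to E_0$ of Corollary~\ref{corollary 1.}.
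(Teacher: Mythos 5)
Your argument is correct and is essentially the paper's own proof: the paper likewise writes $J_{\ep}[\u_{\ep},\w_{\ep}]=E_0\ln(1/\ep)+J_{\ep}^{(1)}[\u_{\ep},\w_{\ep}]$ by unwinding the definition of $J_{\ep}^{(1)}$ and then applies Corollary \ref{corollary 2.} to get $J_{\ep}^{(1)}[\u_{\ep},\w_{\ep}]=F+o(1)$. Your additional bookkeeping — that subtracting the $\ep$-dependent constant $\abs{\ln\ep}E_0$ does not change the minimizer, so the minimizer in Corollary \ref{corollary 2.} is indeed $(\u_{\ep},\w_{\ep})$ — is a point the paper leaves implicit.
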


\begin{remark}
The core energy is a leading singular term of $O(\abs{\ln(\ep)})$, which confirms that the free energy is not finite when dislocations are present. The $O(1)$ term $F$ is usually called the renormalized energy and is physically meaningful. This type of asymptotic expansion was first derived for Ginzburg-Landau vortices in \cite{Bethuel.Brezis.Helein1993}, and extended to the context of dislocation in \cite{Cermelli.Leoni2005}. The techniques to prove $\Gamma$-convergence results were first introduced in the study of the Ginzburg-Landau vortices (see \cite{Alicandroa.Ponsiglione2014} and \cite{Sandier.Serfaty2003}).

Note that the renormalized energy is independent of the radius $\ep$ and thus fully characterizes the energy structure around dislocations.
\end{remark}

As an application of the energy structure, we prove that the interaction energy $F_{\textnormal{int}}$ obeys the inverse logarithmical law of the distance between two dislocations.
\begin{theorem}\label{theorem 5}(Interaction Energy)(see Section 4.1)
Assume that (\ref{assumption}) holds. We have
\begin{eqnarray*}
F_{\textnormal{int}}&=&\sum_{i=1}^{N-1}\sum_{j=i}^N\frac{Cb_u^ib_u^j+Kb_w^ib_w^j+Rb_u^ib_w^j+Rb_w^ib_u^j}{2\pi}\ln\left(\frac{1}{\abs{\v d_i-\v d_j}}\right)+O(1).
\end{eqnarray*}
\end{theorem}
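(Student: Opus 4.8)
### Proof Proposal for Theorem~\ref{theorem 5}

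The plan is to isolate, for each pair $i \neq j$, the dominant contribution to the cross term
$\int_{\Omega}\bigl(C(\u_i\cdot\u_j)+K(\w_i\cdot\w_j)+R(\u_i\cdot\w_j)+R(\u_j\cdot\w_i)\bigr)\,\ud{x}\,\ud{y}$
coming from the mutual singularities at $\v d_i$ and $\v d_j$. Recall from Theorem~\ref{theorem 1} that $\u_i$ is, up to the factor $b_u^i$, the standard vortex field $\frac{1}{2\pi}\frac{(-(y-y_i),\,x-x_i)}{|\v d-\v d_i|^2}$, i.e. $\u_i = b_u^i\,\nabla^\perp\Bigl(\frac{1}{2\pi}\ln|\v d - \v d_i|\Bigr)$, and similarly $\w_i = b_w^i\,\nabla^\perp\bigl(\frac{1}{2\pi}\ln|\v d-\v d_i|\bigr)$ with the same geometric profile. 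Thus $\u_i \cdot \u_j = b_u^i b_u^j\,G_i\cdot G_j$ where $G_i := \nabla^\perp\bigl(\frac{1}{2\pi}\ln|\v d-\v d_i|\bigr)$, and every one of the four terms in $F_{\textnormal{int}}$ collapses to $\bigl(Cb_u^ib_u^j + Kb_w^ib_w^j + Rb_u^ib_w^j + Rb_w^ib_u^j\bigr)\int_\Omega G_i\cdot G_j$. So the theorem reduces to the single scalar claim
\[
\int_{\Omega} G_i \cdot G_j \,\ud{x}\,\ud{y} = \frac{1}{2\pi}\ln\!\left(\frac{1}{|\v d_i - \v d_j|}\right) + O(1).
\]

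To evaluate this integral I would excise small disks $B_\delta(\v d_i)$ and $B_\delta(\v d_j)$ with $\delta$ a fixed small constant (not $\ep$), and on the remaining region use that $G_i$ is curl-free and that $G_j = \nabla^\perp h_j$ where $h_j = \frac{1}{2\pi}\ln|\v d - \v d_j|$ is harmonic away from $\v d_j$; hence $G_i\cdot G_j = \nabla^\perp h_i \cdot \nabla^\perp h_j = \nabla h_i\cdot\nabla h_j$, and an integration by parts turns $\int \nabla h_i \cdot \nabla h_j$ into boundary terms on $\partial\Omega$, $\partial B_\delta(\v d_i)$, $\partial B_\delta(\v d_j)$, using $\Delta h_j = 0$ in the interior. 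The $\partial\Omega$ boundary term and the $\partial B_\delta(\v d_i)$ term (where $h_j$ is smooth) contribute only $O(1)$. On $\partial B_\delta(\v d_j)$ we have $\partial_n h_j = \frac{1}{2\pi\delta}$ and $h_i \approx \frac{1}{2\pi}\ln|\v d_i - \v d_j|$ up to $O(\delta)$, so that boundary integral yields $\frac{1}{2\pi}\ln|\v d_i - \v d_j| + O(\delta)$. Finally the two excised disks themselves contribute $\int_{B_\delta(\v d_i)} G_i\cdot G_j$ and $\int_{B_\delta(\v d_j)} G_i \cdot G_j$; near $\v d_j$ the field $G_i$ is bounded while $|G_j| \sim \frac{1}{2\pi r}$, which is integrable in two dimensions, so each such contribution is $O(1)$ (indeed $O(\delta)$). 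Collecting, $\int_\Omega G_i\cdot G_j = \frac{1}{2\pi}\ln|\v d_i-\v d_j| + O(1) = -\frac{1}{2\pi}\ln\frac{1}{|\v d_i-\v d_j|} + O(1)$; wait — I should track the sign carefully, since the near-diagonal divergence of the mutual energy is \emph{negative} in $\ln|\v d_i-\v d_j|$ and hence positive in $\ln(1/|\v d_i-\v d_j|)$ as stated, which is exactly the assertion once the bookkeeping of signs in the integration by parts is done correctly.

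The main obstacle is precisely this careful bookkeeping of boundary orientations and signs through the two layers of integration by parts (one to move the $\nabla^\perp$ onto $h$, one to exploit harmonicity), together with verifying that the field $G_i$ restricted to $\Omega$ — as opposed to all of $\r^2$ — does not introduce a further logarithmic term from $\partial\Omega$. The cleanest route is to write $G_i\cdot G_j$ symmetrically and integrate by parts with respect to whichever singularity lies \emph{outside} the current subdomain, so that the integrand is always a product of a harmonic function with a bounded gradient; then all error terms are manifestly $O(1)$ uniformly as $\delta\to 0$, and one may even let $\delta\to 0$ to extract the constant exactly if desired, though the statement only requires the $O(1)$ bound. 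A secondary, purely notational, point is that $F_{\textnormal{int}}$ as defined sums over $j \geq i$ including $j = i$; the $j=i$ (self) terms are the already-renormalized self-energies and do not involve any $\ln|\v d_i - \v d_j|$, so they are absorbed into the $O(1)$, and only the genuinely off-diagonal pairs $j > i$ produce the logarithmic sum displayed in the theorem.
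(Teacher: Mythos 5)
Your argument is correct, but it follows a genuinely different route from the paper's. The paper cuts $\Omega$ along a segment $\gamma$ joining $\v d_j$ to $\p\Omega$, writes $\u_j=\na U$ for the multivalued angle potential $U$ on the simply connected set $\Omega\setminus\gamma$, and integrates by parts against the divergence-free field $\u_i$; the logarithm then emerges from an explicit one-dimensional integral of the jump term, $\int_0^{\bar s}\frac{\ud{s}}{\abs{\v d_i-\v d_j}+s}$. You instead work with the single-valued conjugate potential, using $\u_i=b_u^i\,\na^{\perp}h_i$ with $h_i=\frac{1}{2\pi}\ln\abs{\v d-\v d_i}$, the pointwise identity $\na^{\perp}h_i\cdot\na^{\perp}h_j=\na h_i\cdot\na h_j$, and harmonicity of $h_j$ away from $\v d_j$; this avoids the branch cut entirely. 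Two small points deserve to be nailed down. First, your sign hesitation resolves cleanly: on $\p B_\delta(\v d_j)$ the outward normal of the excised domain points \emph{toward} $\v d_j$, so $\p_n h_j=-\frac{1}{2\pi\delta}$, and by the mean value property of the harmonic function $h_i$ on that circle the boundary term is exactly
\begin{equation*}
-\frac{1}{2\pi\delta}\int_{\p B_\delta(\v d_j)}h_i\,\ud{s}=-h_i(\v d_j)=\frac{1}{2\pi}\ln\left(\frac{1}{\abs{\v d_i-\v d_j}}\right),
\end{equation*}
with no $\delta$-error at all, while the term on $\p B_\delta(\v d_i)$ vanishes identically since $h_i$ is constant there and $\int_{\p B_\delta(\v d_i)}\p_n h_j\,\ud{s}=0$. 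Second, your approximation $h_i\approx\frac{1}{2\pi}\ln\abs{\v d_i-\v d_j}$ on $\p B_\delta(\v d_j)$ has error $O(\delta/\abs{\v d_i-\v d_j})$, not $O(\delta)$, so for a fixed $\delta$ the estimate degenerates as the dislocations approach one another; your fallback of letting $\delta\rt0$ (legitimate, since $G_i\cdot G_j\in L^1(\Omega)$) is therefore not optional but the correct way to close the argument, and it in fact yields the sharper statement with an explicit $O(1)$ constant. Finally, note that the sum $\sum_{j=i}^{N}$ in the paper's definition of $F_{\textnormal{int}}$ must be read as $\sum_{j=i+1}^{N}$ (the diagonal term as literally written would be a divergent self-energy, not an $O(1)$ quantity), so your restriction to $j>i$ is the right reading rather than an absorption into the error term.
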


When multiple dislocations are present, defects interact with themselves by means of the so-called Peach-K\"{o}hler force, which is defined as the negative gradient of renormalized energy $F$ at the dislocation points (see \cite{Gurtin1995}).
\begin{theorem}\label{theorem 6}(Peach-K\"{o}hler force)(see Section 4.2)
Assume that (\ref{assumption}) holds. The Peach-K\"{o}hler force acting at $\v d_k$ is given by
\begin{eqnarray*}
\na_{\v d_k}F
&=&-\int_{\p B_r(\v d_k)}\bigg(\f[\u_0,\w_0]\i-(C\u_0\otimes\u_0+K\w_0\otimes\w_0+R\u_0\otimes\w_0+R\w_0\otimes\u_0)\bigg)\cdot n\ud{s},
\end{eqnarray*}
for $r<\dfrac{1}{2}\min_k\Big(dist(\v d_k,\p\Omega)\Big)$.
\end{theorem}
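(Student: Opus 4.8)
The plan is to differentiate the renormalized energy $F = F_{\textnormal{self}}+F_{\textnormal{int}}+F_{\textnormal{elastic}}$ with respect to the position $\v d_k$ of the $k$-th dislocation, and to reorganize the result as a single contour integral over $\p B_r(\v d_k)$. The starting point is the representation from Theorem~\ref{theorem 1}: the limiting fields are $\u_0 = \sum_i \u_i + \na u_0$ and $\w_0 = \sum_i \w_i + \na w_0$, where each $\u_i,\w_i$ is the explicit rotational field centered at $\v d_i$ with modulus $b_u^i$, $b_w^i$, and $(u_0,w_0)$ solves the Euler--Lagrange problem $I_0$. The key structural fact I would exploit is that the total energy $J_{\ep}[\u_\ep,\w_\ep]$ has the asymptotic form $E_0\ln(1/\ep) + F + o(1)$ from Theorem~\ref{theorem 2}, with $E_0$ \emph{independent} of the positions $\v d_k$; hence $\na_{\v d_k}F = \na_{\v d_k}\big(J_\ep[\u_\ep,\w_\ep]\big) + o(1)$, and I can work with the regularized energy on $\Omega_\ep$ and pass to the limit at the end.

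First I would fix a radius $r$ with $r < \tfrac12\min_k\mathrm{dist}(\v d_k,\p\Omega)$ and split $\Omega_\ep = B_r(\v d_k)\backslash B_\ep(\v d_k)$ together with $\Omega_\ep\backslash B_r(\v d_k)$. On the annular region near $\v d_k$ the only $\v d_k$-dependence inside the integrand is through the translation $x\mapsto x-\v d_k$ of the singular field $\u_k,\w_k$ (and of the integration domain); on the far region the fields $\u_0,\w_0$ depend smoothly on $\v d_k$ both through the regular parts $u_0,w_0$ and through the tails of $\u_k,\w_k$. Differentiating under the integral sign and using that $\na\cdot\sigma_0 = 0$, $\na\cdot\rho_0 = 0$ in $\Omega_\ep$ (the equilibrium equations, which hold for the limiting minimizer away from the cores), I would convert the bulk derivative into boundary terms via an Eshelby-type divergence identity: for the anisotropic energy density $\f[\u_0,\w_0]$ one has
\[
\na\cdot\Big(\f[\u_0,\w_0]\,\i - C\u_0\otimes\u_0 - K\w_0\otimes\w_0 - R\u_0\otimes\w_0 - R\w_0\otimes\u_0\Big) = 0
\]
wherever $\u_0,\w_0$ are smooth, which is exactly the combination appearing in the statement. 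Translation invariance of $\f$ then identifies $\na_{\v d_k}$ of the energy with minus the flux of this Eshelby tensor through the boundary of a neighborhood of $\v d_k$.

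The main obstacle will be the bookkeeping of the boundary contributions: when I contract the region around $\v d_k$ I pick up integrals over $\p B_\ep(\v d_k)$ (from the core) and over $\p B_r(\v d_k)$, and I must show that the $\p B_\ep(\v d_k)$ contribution either cancels the $\v d_k$-derivative of the $\ln\ep$ term or vanishes in the limit $\ep\to 0$ — this is where the precise logarithmic subtraction in $F_{\textnormal{self}}$ (the $+\,\tfrac{C(b_u^k)^2+\cdots}{4\pi}\ln r$ term) is essential, and where the explicit $1/|x-\v d_k|$ form of $\u_k,\w_k$ lets me compute the singular part exactly. I would also need to verify that the $F_{\textnormal{elastic}}$ boundary terms on $\p\Omega$, together with the Euler--Lagrange optimality of $(u_0,w_0)$, produce no net $\v d_k$-dependence beyond what is already captured — this follows by differentiating $I_0[u_0,w_0]$ and using that the first variation vanishes at the minimizer, so only the explicit $\v d_k$-dependence through $\u_k,\w_k$ survives, and that piece recombines with the near-field flux. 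Collecting everything and letting $\ep\to 0$, the surviving term is precisely the stated integral of the Eshelby tensor over $\p B_r(\v d_k)$, and its independence of $r$ in the admissible range follows from the divergence-free property of the Eshelby tensor on the annulus $B_{r'}(\v d_k)\backslash B_r(\v d_k)$.
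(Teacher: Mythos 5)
Your overall mechanism is the right one and largely matches the paper's: differentiate the renormalized energy in $\v d_k$, use the equilibrium equations together with the curl-free structure to recognize the divergence-free Eshelby tensor, and reorganize all contributions into a single flux through $\p B_r(\v d_k)$, with the $r$-independence following from the divergence identity on the annulus. Your envelope-theorem treatment of $F_{\textnormal{elastic}}$ (only the explicit dependence through $\u_k,\w_k$ survives at the minimizer) is the same cancellation the paper carries out by hand in its Steps 2--6, where the core boundary terms $I_2,I_3$ arising from the bulk piece are matched term by term against the derivatives of the correction terms.

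There is, however, one genuine gap in your setup. You begin from $J_\ep[\u_\ep,\w_\ep]=E_0\ln(1/\ep)+F+o(1)$ and conclude $\na_{\v d_k}F=\na_{\v d_k}\big(J_\ep[\u_\ep,\w_\ep]\big)+o(1)$. This is differentiation of an asymptotic expansion term by term in a parameter ($\v d_k$) with respect to which the $o(1)$ error has not been shown to be uniform, let alone $C^1$-small; convergence of the energies for each fixed configuration does not control $\na_{\v d_k}$ of the remainder. Nothing in Corollaries \ref{corollary 1.} or \ref{corollary 2.} gives you that uniformity, so the step as written does not follow. The paper avoids this entirely: it never differentiates the $\ep$-minimizer's energy, but instead writes $F$ (up to a position-\emph{independent} additive term) as an exact expression $G+H$ built from the limiting fields $\u_0,\w_0$ and the explicit singular solutions $\u_i,\w_i$, and differentiates that exact expression using the transport lemmas for integrals over moving domains $B_\ep(\v d_k+\theta\v V)$. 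You in fact drift toward this yourself mid-argument (you invoke the equilibrium equations for the \emph{limiting} minimizer), so the fix is to commit to the limiting fields from the start: differentiate the exact formula for $F$ in terms of $\u_0,\w_0$ rather than the $\ep$-regularized energy of $(\u_\ep,\w_\ep)$. With that repair, the rest of your outline — the Eshelby identity, the cancellation of the $\p B_\ep$ contributions against the logarithmic subtraction and the correction terms, and the final flux over $\p B_r(\v d_k)$ — is sound, though the bookkeeping you defer is the bulk of the actual work.
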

\begin{remark}
The integrand in Theorem \ref{theorem 6}
\begin{eqnarray*}
E=-\bigg(\f[\u_0,\w_0]\i-(C\u_0\otimes\u_0+K\w_0\otimes\w_0+R\u_0\otimes\w_0+R\w_0\otimes\u_0)\bigg).
\end{eqnarray*}
is usually called the Eshelby stress tensor.
\end{remark}

Our paper is organized as follows: in Section 2 we present some preliminary results on the minimization problem (\ref{problem.}) of $J_{\ep}$ for fixed $\ep$; in Section 3 we derive the zeroth-order and first-order $\Gamma$-convergence of the free energy when $\ep\rt0$ and study the structure of minimizer and energy; Finally, in Section 4 we introduce two applications of the renormalized energy: the interaction between dislocations and the Peach-K\"{o}hler force.

\section{Preliminaries}

In this section, we consider the minimization problem (\ref{problem.}) of $J_{\ep}$ for fixed $\ep$.

\subsection{Euler-Lagrange Equation}

We start with the equations that minimizer of $J_{\ep}$ should satisfy and the uniqueness of minimizer.
\begin{lemma}\label{lemma 1}
Assume that (\ref{assumption}) holds and $(\u_{\ep},\w_{\ep})$ is the minimizer of $J_{\ep}$ in $H_0^{\ep}(\Omega)$. Then it satisfies the equations
\begin{eqnarray}\label{temp 1}
\left\{
\begin{array}{rcl}
\nabla\cdot(\c\u_{\ep}+\rr\w_{\ep})=\nabla\cdot(\k\w_{\ep}+\rr^T\u_{\ep})&=&0\ \ \text{in}\ \ \Omega_{\ep},\\
(\c\u_{\ep}+\rr\w_{\ep})\cdot n=(\k\w_{\ep}+\rr^T\u_{\ep})\cdot n&=&0\ \ \text{on}\ \ \p\Omega_{\ep},
\end{array}
\right.
\end{eqnarray}
where $n$ is the outward normal vector to $\p\Omega_{\ep}$. Moreover, the solution to (\ref{temp 1}) is unique.
\end{lemma}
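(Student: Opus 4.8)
The plan is to derive the Euler--Lagrange system by the standard first-variation argument and then obtain uniqueness from strict convexity of the energy on the affine admissible set $H_0^{\ep}$. First I would fix a minimizer $(\u_{\ep},\w_{\ep})\in H_0^{\ep}$ and note that the admissible set is affine: if $(\uk,\wk)$ is any other element of $H_0^{\ep}$, then the difference $(\uk-\u_{\ep},\wk-\w_{\ep})$ lies in the linear space
\begin{eqnarray*}
V:=\bigg\{(\v p,\v q)\in L^2(\Omega_{\ep})\times L^2(\Omega_{\ep}):\na\times\v p=0,\ \na\times\v q=0\ \text{in}\ \Omega_{\ep},\ \int_{\p B_{\ep}(\v d_i)}\v p\cdot\t\ud{s}=\int_{\p B_{\ep}(\v d_i)}\v q\cdot\t\ud{s}=0,\ i=1,\dots,N\bigg\}.
\end{eqnarray*}
Because $\Omega_{\ep}$ is an $(N+1)$-connected planar domain and the circulations of $\v p,\v q$ around each hole $B_{\ep}(\v d_i)$ vanish (the circulation around $\p\Omega$ is automatically zero since each $\v p,\v q$ is globally curl-free with prescribed circulations summing correctly), such $(\v p,\v q)$ are gradients: $\v p=\na\phi$, $\v q=\na\psi$ for some $\phi,\psi\in H^1(\Omega_{\ep})$. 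Then for any $s\in\r$ the function $s\mapsto J_{\ep}[\u_{\ep}+s\na\phi,\w_{\ep}+s\na\psi]$ is a quadratic polynomial in $s$ with a minimum at $s=0$, so its derivative vanishes:
\begin{eqnarray*}
\int_{\Omega_{\ep}}\bigg((\c\u_{\ep}+\rr\w_{\ep})\cdot\na\phi+(\k\w_{\ep}+\rr^T\u_{\ep})\cdot\na\psi\bigg)\ud{x}\ud{y}=0\qquad\text{for all }\phi,\psi\in H^1(\Omega_{\ep}).
\end{eqnarray*}
Taking $\psi=0$ and testing against $\phi\in C_c^{\infty}(\Omega_{\ep})$ gives $\na\cdot(\c\u_{\ep}+\rr\w_{\ep})=0$ in the sense of distributions; testing against general $\phi\in H^1(\Omega_{\ep})$ and integrating by parts recovers the natural boundary condition $(\c\u_{\ep}+\rr\w_{\ep})\cdot n=0$ on $\p\Omega_{\ep}$. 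The same argument with $\phi=0$ yields the equations for $\k\w_{\ep}+\rr^T\u_{\ep}$, which is exactly \eqref{temp 1}.

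For uniqueness, suppose $(\u_{\ep}^{(1)},\w_{\ep}^{(1)})$ and $(\u_{\ep}^{(2)},\w_{\ep}^{(2)})$ are both minimizers. Their difference $(\v p,\v q)=(\na\phi,\na\psi)$ lies in $V$ as above, and subtracting the weak formulations (each minimizer satisfies the identity displayed above against all test pairs) and choosing the test pair to be $(\phi,\psi)$ itself gives
\begin{eqnarray*}
\int_{\Omega_{\ep}}\bigg(C\abs{\na\phi}^2+K\abs{\na\psi}^2+2R(\na\phi\cdot\na\psi)\bigg)\ud{x}\ud{y}=0.
\end{eqnarray*}
By \eqref{assumption} the quadratic form $(\xi,\eta)\mapsto C\abs{\xi}^2+K\abs{\eta}^2+2R\,\xi\cdot\eta$ is positive definite (its discriminant condition is precisely $CK>R^2$ with $C,K>0$), so the integrand is pointwise $\gs\abs{\na\phi}^2+\abs{\na\psi}^2$, forcing $\na\phi=\na\psi=0$ a.e.\ in $\Omega_{\ep}$, i.e.\ $\v p=\v q=0$. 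Hence the two minimizers coincide.

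The only genuinely delicate point is the representation $\v p=\na\phi$ for elements of $V$: one must justify that an $L^2$ curl-free vector field on the multiply connected domain $\Omega_{\ep}$ with vanishing circulation around every boundary component is a gradient of an $H^1$ function. This is the standard de Rham / Hodge decomposition for low-regularity fields on planar domains with Lipschitz boundary, and it is implicit in the definition of $H_0^{\ep}$ given in the introduction; I would cite \cite{Cermelli.Gurtin1999} and \cite{Cermelli.Leoni2005} for the fact that tangential traces on $\p B_{\ep}(\v d_i)$ are well defined in this $L^2$ curl-free class, and invoke the corresponding potential representation. Everything else is routine first-variation computation and an application of positive definiteness; I expect this potential-representation step to be where the paper's preceding setup is doing the real work.
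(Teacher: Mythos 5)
Your proposal is correct and follows essentially the same route as the paper: the difference of two admissible fields is written as a pair of gradients $(\na P,\na Q)$ via the curl-free and zero-circulation conditions, the Euler--Lagrange system with natural boundary conditions is obtained from the vanishing first variation, and uniqueness follows by testing the weak formulation with the difference itself and invoking positive definiteness of the quadratic form from (\ref{assumption}). The potential-representation step you flag as delicate is indeed the one the paper delegates to the cited references, so no further comparison is needed.
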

\begin{proof}
The free energy density in $\Omega_{\ep}$ is given by
\begin{eqnarray*}
\f[\uu,\ww]&=&\frac{1}{2}\left(
\begin{array}{cc}
\uu^T&\ww^T
\end{array}
\right)\left(
\begin{array}{cc}
\c&\rr\\
\rr^T&\k
\end{array}
\right)
\left(
\begin{array}{c}
\uu\\
\ww
\end{array}
\right)\\
&=&\frac{1}{2}\bigg(\uu^T\c\uu+\ww^T\k\ww+\uu^T\rr\ww+\ww^T\rr^T\uu\bigg).\no
\end{eqnarray*}
For any $(\uu,\ww)$ and $(\bar\uu,\bar\ww)$ in $H_0^{\ep}$, we must have $\uu-\bar\uu=\na P$ and $\ww-\bar\ww=\na Q$ for some $P,Q\in H^1(\Omega_{\ep})$ due to curl-free condition. Hence, the first-order variation is
\begin{eqnarray*}
\delta J_{\ep}[\uu,\ww](p,q)&=&\lim_{\theta\rt0}\frac{J_{\ep}[\uu+\theta\na p,\ww+\theta\na q]-J_{\ep}[\uu,\ww]}{\theta}\\
&=&-\int_{\Omega_{\ep}}\bigg(p\na\cdot(\c\uu+\rr\ww)+q\na\cdot(\k\ww+\rr^T\uu)\bigg)\ud{x}\ud{y}\no\\
&&+\int_{\p\Omega_{\ep}}\bigg(p(\c\uu+\rr\ww)\cdot n+q(\k\ww+\rr^T\uu)\cdot n\bigg)\ud{s}.\no
\end{eqnarray*}
Thus, setting $\delta J_{\ep}[\uu,\ww](p,q)=0$ for any $p,q\in H^1(\Omega_{\ep})$, we can deduce that the minimizer $(\u_{\ep},\w_{\ep})$ is a weak solution of the Euler-Lagrange equations (\ref{temp 1}).

To prove uniqueness, assume that $(\u_{\ep},\w_{\ep})$ and $(\bar\u_{\ep},\bar\w_{\ep})$ are two solutions to (\ref{temp 1}). The difference $(f,g)=(\u_{\ep}-\bar\u_{\ep},\w_{\ep}-\bar\w_{\ep})$ must be curl-free and has zero loop integral around $\p B_{\ep}(\v d_i)$. Therefore, we must have $(f,g)=(\na F,\na G)$ for some $F,G\in H^1(\Omega_{\ep})$. Since $F$ and $G$ satisfy the equation
\begin{eqnarray*}
\int_{\Omega_{\ep}}\bigg((\na p)^T(\c\na F+\rr\na G)+(\na q)^T(\k\na G+\rr^T\na F)\bigg)\ud{x}\ud{y}=0,
\end{eqnarray*}
for any $p,q\in H^1(\Omega_{\ep})$, taking $p=F$ and $q=G$, considering $\left(
\begin{array}{cc}
\c&\rr\\
\rr^T&\k
\end{array}
\right)$ is positive definite, we must have $\na F=\na G=0$, and the uniqueness follows.
\end{proof}

\subsection{Estimate and Energy for Single Dislocation}

In this section, we further restrict the discussion to the case in which $\Omega=B_r(\v d_0)$ for constant $r>>\ep$, with only one dislocation at $\v d_0=(x_0,y_0)$ with Burger's vector of phonon field as $\v b_u$ and of phase field as $\v b_w$. Solving the above Euler-Lagrange equations (\ref{temp 1}), by a linear combination, we get
\begin{eqnarray*}
\left\{
\begin{array}{rcl}
\na\cdot\u_{\ep}=\na\cdot\w_{\ep}&=&0\ \ \text{in}\ \ \Omega_{\ep},\\
\u_{\ep}\cdot n=\w_{\ep}\cdot n&=&0\ \ \text{on}\ \ \p\Omega_{\ep},
\end{array}
\right.
\end{eqnarray*}
in $H_0^{\ep}(\Omega)$.
Hence, there exists potential functions $U_{\ep}(x,y)$ and $W_{\ep}(x,y)$ such that $\na U_{\ep}=\u_{\ep}$, $\na W_{\ep}=\w_{\ep}$ and
\begin{eqnarray*}
\de U_{\ep}=\de W_{\ep}=0\ \ \text{in}\ \ \Omega_{\ep}.
\end{eqnarray*}
Therefore, we are lead to solving Laplace's equations in an annulus with Neumann boundary $\dfrac{\p U_{\ep}}{\p n}=\dfrac{\p W_{\ep}}{\p n}=0$. This system has a unique solution subject to the normalization conditions $\displaystyle\int_{\p B_{\ep}(\v d_0)}\ud{U_{\ep}}=b_u$ and $\displaystyle\int_{\p B_{\ep}(\v d_0)}\ud{W_{\ep}}=b_w$, and we obtain the explicit solution as
\begin{eqnarray*}
U_{\ep}=\frac{b_u}{2\pi}\arctan\bigg(\frac{y-y_0}{x-x_0}\bigg),\quad W_{\ep}=\frac{b_w}{2\pi}\arctan\bigg(\frac{y-y_0}{x-x_0}\bigg)\ \ \text{for}\ \ (x,y)\in\Omega_{\ep}.
\end{eqnarray*}
Hence, we have
\begin{eqnarray}
\u_{\ep}&=&\frac{b_u}{2\pi}\frac{1}{(x-x_0)^2+(y-y_0)^2}\bigg(-(y-y_0),(x-x_0)\bigg),\label{mt temp 12}\\
\w_{\ep}&=&\frac{b_w}{2\pi}\frac{1}{(x-x_0)^2+(y-y_0)^2}\bigg(-(y-y_0),(x-x_0)\bigg),\label{mt temp 13}
\end{eqnarray}
for $(x,y)\in\Omega_{\ep}$, and we note that these are independent of $\ep$ and $r$. Therefore, the minimum free energy can be obtained explicitly as
\begin{eqnarray*}
J_{\ep}=\int_{\Omega_{\ep}}\f[\u_{\ep},\w_{\ep}]\ud{x}\ud{y}=(Cb_u^2+Kb_w^2+2Rb_ub_w)\frac{1}{4\pi}\ln\bigg(\frac{r}{\ep}\bigg).
\end{eqnarray*}

\subsection{Estimate and Energy for Multiple Dislocations}

Now we consider the case with multiple dislocations in general domains. For fixed $\v d_i=(x_i,y_i)$, assume that the single-dislocation solution is $(\u_i,\w_i)$. Based on analysis in Lemma \ref{lemma 1}, we must have
\begin{eqnarray*}
\u_{\ep}:=\sum_{i=1}^N\u_i+\na u_{\ep},\qquad \w_{\ep}:=\sum_{i=1}^N\w_i+\na w_{\ep}.
\end{eqnarray*}
for some $u_{\ep},w_{\ep}\in H^1(\Omega_{\ep})$. We deduce
\begin{eqnarray*}
J_{\ep}[\u_{\ep},\w_{\ep}]&=&I_{\ep}[u_{\ep},w_{\ep}]+\sum_{i=1}^NJ_{\ep}[\u_i,\w_i]\\
&&+\sum_{i=1}^{N-1}\sum_{j=i}^N\int_{\Omega_{\ep}}\bigg(C(\u_i\cdot\u_j)+K(\w_i\cdot\w_j)+R(\u_i\cdot\w_j)+R(\u_j\cdot\w_i)\bigg),\no
\end{eqnarray*}
where
\begin{eqnarray*}
I_{\ep}[u_{\ep},w_{\ep}]:&=&J_{\ep}[\na u_{\ep},\na w_{\ep}]+\sum_{i=1}^N\int_{\p\Omega}\bigg(u_{\ep}(C\u_i+R\w_i)+w_{\ep}(K\w_i+R\u_i)\bigg)\cdot n\ud{s}\no\\
&&-\sum_{i=1}^N\sum_{j\neq i}\int_{\p B_{\ep}(x_i,y_i)}\bigg(u_{\ep}(C\u_j+R\w_j)+w_{\ep}(K\w_j+R\u_j)\bigg)\cdot n\ud{s}.\no
\end{eqnarray*}
Therefore, in order to minimize $J_{\ep}$, it suffices to consider the problem:\\
\ \\
($M_{\ep}$): Minimize $I_{\ep}[\uk,\wk]$ for $\uk,\wk\in H^1(\Omega_{\ep})$ subject to
$\displaystyle\int_{B}\uk\ud{x}\ud{y}=0$ and $\displaystyle\int_{B}\wk\ud{x}\ud{y}=0$
for some ball $B\subset\Omega_{\ep}$, i.e. find the solution of
\begin{eqnarray}\label{mini problem}
\min_{\uk,\wk\in H^1(\Omega_{\ep})}I_{\ep}[\uk,\wk].
\end{eqnarray}

This normalization is for the convenience of coercivity and will not affect the minimizing process since adding a constant to $\uk$ or $\wk$ will not affect the value of $I_{\ep}[\uk,\wk]$.

\begin{lemma}
Assume that (\ref{assumption}) holds and $(u_{\ep},w_{\ep})$ is the solution of the minimization problem (\ref{mini problem}) for $I_{\ep}$. Then it satisfies the equations
\begin{eqnarray}\label{temp 2}
\left\{
\begin{array}{rcl}
\nabla\cdot(\c\na u_{\ep}+\rr\na w_{\ep})=\nabla\cdot(\k\na w_{\ep}+\rr^T\na u_{\ep})&=&0\ \ \text{in}\ \ \Omega_{\ep},\\
\Bigg(\c\bigg(\ds\sum_{k=1}^N\u_i+\na u_{\ep}\bigg)+\rr\bigg(\ds\sum_{k=1}^N\w_i+\na w_{\ep}\bigg)\Bigg)\cdot n&=&0\ \ \text{on}\ \ \p\Omega,\\
\Bigg(\k\bigg(\ds\sum_{k=1}^N\w_i+\na w_{\ep}\bigg)+\rr^T\bigg(\ds\sum_{k=1}^N\u_i+\na u_{\ep}\bigg)\Bigg)\cdot n&=&0\ \ \text{on}\ \ \p\Omega,\\
\Bigg(\c\bigg(\ds\sum_{j\neq i}\u_i+\na u_{\ep}\bigg)+\rr\bigg(\ds\sum_{j\neq i}\w_i+\na w_{\ep}\bigg)\Bigg)\cdot n&=&0\ \ \text{on}\ \ \p B_{\ep}(\v d_i),\\
\Bigg(\k\bigg(\ds\sum_{j\neq i}\w_i+\na w_{\ep}\bigg)+\rr^T\bigg(\ds\sum_{j\neq i}\u_i+\na u_{\ep}\bigg)\Bigg)\cdot n&=&0\ \ \text{on}\ \ \p B_{\ep}(\v d_i).
\end{array}
\right.
\end{eqnarray}
Moreover, the solution to (\ref{temp 2}) is unique.
\end{lemma}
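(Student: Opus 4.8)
The plan is to mimic the proof of Lemma \ref{lemma 1}, adapting it to the functional $I_\ep$ and its mixed boundary terms. First I would compute the first-order variation. Fix $(u_\ep,w_\ep)$ satisfying the normalization constraint and take test perturbations $(u_\ep+\theta p, w_\ep+\theta q)$ with $p,q\in H^1(\Omega_\ep)$ (note the mean-zero constraint is harmless since $I_\ep$ is invariant under adding constants, so one may drop it when differentiating). Expand
\begin{eqnarray*}
I_\ep[u_\ep+\theta p,w_\ep+\theta q]&=&J_\ep[\na u_\ep+\theta\na p,\na w_\ep+\theta\na q]\\
&&+\sum_{i=1}^N\int_{\p\Omega}\bigg((u_\ep+\theta p)(C\u_i+R\w_i)+(w_\ep+\theta q)(K\w_i+R\u_i)\bigg)\cdot n\,\ud s\\
&&-\sum_{i=1}^N\sum_{j\neq i}\int_{\p B_\ep(\v d_i)}\bigg((u_\ep+\theta p)(C\u_j+R\w_j)+(w_\ep+\theta q)(K\w_j+R\u_j)\bigg)\cdot n\,\ud s,
\end{eqnarray*}
differentiate in $\theta$ at $\theta=0$, and integrate by parts in the quadratic term $J_\ep[\na u_\ep,\na w_\ep]$ exactly as in Lemma \ref{lemma 1}. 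The bulk integral produces $-\int_{\Omega_\ep}\big(p\,\na\cdot(\c\na u_\ep+\rr\na w_\ep)+q\,\na\cdot(\k\na w_\ep+\rr^T\na u_\ep)\big)$, and the boundary integral from integration by parts is $\int_{\p\Omega_\ep}\big(p(\c\na u_\ep+\rr\na w_\ep)\cdot n+q(\k\na w_\ep+\rr^T\na u_\ep)\cdot n\big)$, where $\p\Omega_\ep=\p\Omega\cup\bigcup_i\p B_\ep(\v d_i)$ (with $n$ outward for $\Omega_\ep$, hence inward-pointing on each circle). Adding the explicit boundary terms coming from the linear part of $I_\ep$ and collecting the coefficient of $p$ on $\p\Omega$, on each $\p B_\ep(\v d_i)$, and similarly for $q$, gives precisely the four boundary identities in (\ref{temp 2}): the $\p\Omega$ conditions combine $\na u_\ep$ with $\sum_i\u_i$ (and likewise for $w$), while the $\p B_\ep(\v d_i)$ conditions combine $\na u_\ep$ with $\sum_{j\neq i}\u_j$, since the term $j=i$ is precisely what is absent from the subtracted sum. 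Setting the variation to zero for all $p,q\in H^1(\Omega_\ep)$ yields the Euler--Lagrange system.

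For uniqueness I would argue as before. Suppose $(u_\ep,w_\ep)$ and $(\bar u_\ep,\bar w_\ep)$ both solve (\ref{temp 2}); set $F=u_\ep-\bar u_\ep$, $G=w_\ep-\bar w_\ep$. Subtracting the two systems kills the inhomogeneous $\sum\u_i$, $\sum\w_i$ contributions in all four boundary lines, leaving
\begin{eqnarray*}
\na\cdot(\c\na F+\rr\na G)=\na\cdot(\k\na G+\rr^T\na F)=0\ \text{in}\ \Omega_\ep,\qquad
(\c\na F+\rr\na G)\cdot n=(\k\na G+\rr^T\na F)\cdot n=0\ \text{on}\ \p\Omega_\ep.
\end{eqnarray*}
Testing the weak form with $p=F$, $q=G$ and integrating by parts gives $\int_{\Omega_\ep}\big((\na F)^T(\c\na F+\rr\na G)+(\na G)^T(\k\na G+\rr^T\na F)\big)=0$; positive definiteness of $\left(\begin{smallmatrix}\c&\rr\\\rr^T&\k\end{smallmatrix}\right)$ (assumption (\ref{assumption})) forces $\na F=\na G=0$, hence $F$ and $G$ are constants on the connected set $\Omega_\ep$, and the mean-zero normalization over the ball $B$ pins them to zero. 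This is the same mechanism as in Lemma \ref{lemma 1}.

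The main obstacle is bookkeeping rather than conceptual: keeping track of orientations of the normal $n$ on the inner circles $\p B_\ep(\v d_i)$ versus $\p\Omega$ when integrating by parts, and verifying that the $j\neq i$ restriction in the subtracted boundary sum is exactly what makes the $\p B_\ep(\v d_i)$ line of (\ref{temp 2}) involve $\sum_{j\neq i}\u_j+\na u_\ep$ rather than the full sum. One should also check that the tangential-trace/curl-free framework makes all these boundary integrals well-defined, which is already granted by the cited regularity theory for the $L^2$ curl-free space. Existence of a minimizer (needed to know the Euler--Lagrange solution is attained) follows from the direct method: $I_\ep$ differs from the coercive Dirichlet-type form $J_\ep[\na\uk,\na\wk]$ by boundary terms that are linear in $(\uk,\wk)$ and hence controlled by $\nm{\na\uk}_{L^2}+\nm{\na\wk}_{L^2}$ via trace inequalities, so with the mean-zero constraint and Poincaré one gets coercivity and weak lower semicontinuity on $H^1(\Omega_\ep)\times H^1(\Omega_\ep)$.
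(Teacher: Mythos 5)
Your proposal is correct and follows essentially the same route as the paper: compute the first variation $\delta I_{\ep}[\uk,\wk](p,q)$, integrate by parts in the quadratic term, collect the coefficients of $p$ and $q$ on $\Omega_{\ep}$, $\p\Omega$, and each $\p B_{\ep}(\v d_i)$ to read off (\ref{temp 2}), and prove uniqueness by testing the difference of two solutions against itself and invoking positive definiteness of $\left(\begin{smallmatrix}\c&\rr\\\rr^T&\k\end{smallmatrix}\right)$. Your added remarks on the normal orientation on the inner circles and on the mean-zero normalization pinning down the constants are correct refinements of the paper's terser "standard argument" citation.
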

\begin{proof}
This follows a standard argument via first-order variation. Letting
\begin{eqnarray*}
\delta I_{\ep}[\uk,\wk](p,q)&=&\lim_{\theta\rt0}\frac{I_{\ep}[\uk+\theta p,\wk+\theta q]-I_{\ep}[\uk,\wk]}{\theta}\\
&=&-\int_{\Omega_{\ep}}\bigg(p\na\cdot(\c\na \uk+\rr\na \wk)+q\na\cdot(\k\na \wk+\rr^T\na \uk)\bigg)\ud{x}\ud{y}\no\\
&&+\int_{\p\Omega}p\Bigg(\c\bigg(\ds\sum_{k=1}^N\u_i+\na \uk\bigg)+\rr\bigg(\ds\sum_{k=1}^N\w_i+\na \wk\bigg)\Bigg)\cdot n\ud{s}\no\\
&&+\int_{\p\Omega}q\Bigg(\k\bigg(\ds\sum_{k=1}^N\w_i+\na \wk\bigg)+\rr^T\bigg(\ds\sum_{k=1}^N\u_i+\na \uk\bigg)\Bigg)\cdot n\ud{s}\no\\
&&-\int_{\p B_{\ep}(\v d_i)}p\Bigg(\c\bigg(\ds\sum_{j\neq i}\u_i+\na \uk\bigg)+\rr\bigg(\ds\sum_{j\neq i}\w_i+\na \wk\bigg)\Bigg)\cdot n\ud{s}\no\\
&&-\int_{\p B_{\ep}(\v d_i)}q\Bigg(\k\bigg(\ds\sum_{j\neq i}\w_i+\na \wk\bigg)+\rr^T\bigg(\ds\sum_{j\neq i}\u_i+\na \uk\bigg)\Bigg)\cdot n\ud{s}.\no
\end{eqnarray*}
If $\delta I_{\ep}[\uk,\wk](p,q)=0$ for any $p,q\in H^1(\Omega_{\ep})$, then the system (\ref{temp 2}) is satisfied. The uniqueness follows from a standard argument as in the proof of Lemma \ref{lemma 1}.
\end{proof}

\subsection{Minimization of the Energy}

\begin{lemma}\label{mini lemma}
Assume that (\ref{assumption}) holds. There exist constants $C_1,C_2>0$ independent of $\ep$ such that
\begin{eqnarray*}
I_{\ep}[\uk,\wk]\geq C_1\bigg(\nm{\uk}_{H^1(\Omega_{\ep})}^2+\nm{\wk}_{H^1(\Omega_{\ep})}^2\bigg)-C_2\bigg(\nm{\uk}_{H^1(\Omega_{\ep})}+\nm{\wk}_{H^1(\Omega_{\ep})}\bigg),
\end{eqnarray*}
for all $\uk,\wk\in H^1(\Omega_{\ep})$ subject to the normalization condition $\displaystyle\int_{B}\uk\ud{x}\ud{y}=0$ and $\displaystyle\int_{B}\wk\ud{x}\ud{y}=0$ for some ball $B\subset\Omega_{\ep}$. Moreover, the minimization problem (\ref{mini problem}) for $I_{\ep}$ admits a unique solution $(u_{\ep},w_{\ep})\in H^1(\Omega_{\ep})$ satisfying
\begin{eqnarray*}
\nm{\uk}_{H^1(\Omega_{\ep})}^2+\nm{\wk}_{H^1(\Omega_{\ep})}^2\leq M,
\end{eqnarray*}
for some constant $M>0$ independent of $\ep$.
\end{lemma}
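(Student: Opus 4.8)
The plan is to establish coercivity of $I_{\ep}$ with constants \emph{independent of $\ep$}, and then read off existence, the uniform $H^1$-bound, and uniqueness of the minimizer by the direct method together with strict convexity. For the coercivity, first note a pointwise lower bound on the density: for $a=(a_1,a_2),b=(b_1,b_2)\in\r^2$ one has $\f[a,b]=\half\sum_{\alpha=1}^{2}(a_\alpha,b_\alpha)\,M_0\,(a_\alpha,b_\alpha)^T$, where $M_0$ is the $2\times2$ matrix with rows $(C,R)$ and $(R,K)$; by (\ref{assumption}) $M_0$ is positive definite, so with $\mu:=\half\lambda_{\min}(M_0)>0$ we get $\f[a,b]\geq\mu(\abs{a}^2+\abs{b}^2)$, hence $J_{\ep}[\na\uk,\na\wk]\geq\mu\big(\nm{\na\uk}_{L^2(\Omega_{\ep})}^2+\nm{\na\wk}_{L^2(\Omega_{\ep})}^2\big)$. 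It thus remains to convert $\nm{\na\uk}_{L^2}$ into the full $H^1$-norm and to absorb the boundary terms as lower order.

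The heart of the matter is a Poincaré inequality on $\Omega_{\ep}$ with constant uniform in $\ep$: if $\uk\in H^1(\Omega_{\ep})$ with $\int_B\uk\,\ud x\,\ud y=0$, then $\nm{\uk}_{L^2(\Omega_{\ep})}\leq C_P\nm{\na\uk}_{L^2(\Omega_{\ep})}$ with $C_P$ independent of $\ep$. I would fix $\ep_0>0$ small so that the balls $B_{\ep_0}(\v d_i)$ are pairwise disjoint, contained in $\Omega$, disjoint from $B$ and from a collar of $\p\Omega$, and decompose $\Omega_{\ep}=\Omega_{\ep_0}\cup\bigcup_{i=1}^N A_i^{\ep}$ with $A_i^{\ep}:=B_{\ep_0}(\v d_i)\setminus B_{\ep}(\v d_i)$. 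On the \emph{fixed} Lipschitz domain $\Omega_{\ep_0}$ the classical Poincaré inequality with the constraint $\int_B\uk=0$ gives $\nm{\uk}_{L^2(\Omega_{\ep_0})}\leq C_0\nm{\na\uk}_{L^2(\Omega_{\ep_0})}$, $C_0$ independent of $\ep$. On each annulus, writing $\uk(r,\theta)-\uk(\ep_0,\theta)=\int_{\ep_0}^{r}\p_\rho\uk\,\ud\rho$ and using Cauchy--Schwarz with the weight $\rho$, one gets $\abs{\uk(r,\theta)-\uk(\ep_0,\theta)}^2\leq\ln(\ep_0/r)\int_{\ep}^{\ep_0}\abs{\p_\rho\uk}^2\rho\,\ud\rho$; integrating against $r\,\ud r\,\ud\theta$ and using $\int_0^{\ep_0}\ln(\ep_0/r)\,r\,\ud r<\infty$ bounds $\nm{\uk}_{L^2(A_i^{\ep})}$ by $C\big(\nm{\na\uk}_{L^2(A_i^{\ep})}+\nm{\uk}_{L^2(\p B_{\ep_0}(\v d_i))}\big)$ with $C$ independent of $\ep$. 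Since $\p B_{\ep_0}(\v d_i)\subset\p\Omega_{\ep_0}$, the trace theorem on $\Omega_{\ep_0}$ together with the preceding Poincaré estimate gives $\nm{\uk}_{L^2(\p B_{\ep_0}(\v d_i))}\leq C\nm{\na\uk}_{L^2(\Omega_{\ep})}$. Summing yields the claim, and the same for $\wk$; consequently $J_{\ep}[\na\uk,\na\wk]\geq C_1\big(\nm{\uk}_{H^1(\Omega_{\ep})}^2+\nm{\wk}_{H^1(\Omega_{\ep})}^2\big)$ with $C_1=C_1(\mu,C_P)$.

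Next the boundary terms: $\u_i,\w_i$ are smooth and bounded uniformly in $\ep$ on $\p\Omega$ (where $\mathrm{dist}(\cdot,\v d_i)\geq\min_k\mathrm{dist}(\v d_k,\p\Omega)>0$) and on $\p B_{\ep}(\v d_i)$ for $j\neq i$ (where $\mathrm{dist}(\cdot,\v d_j)\geq\half\min_{k\neq l}\abs{\v d_k-\v d_l}$ once $\ep$ is small). Hence $\big|\int_{\p\Omega}\uk(C\u_i+R\w_i)\cdot n\,\ud s\big|\leq C\nm{\uk}_{L^2(\p\Omega)}\leq C\nm{\uk}_{H^1(\Omega_{\ep_0})}\leq C\nm{\uk}_{H^1(\Omega_{\ep})}$ by the trace theorem on $\Omega_{\ep_0}$, and a radial estimate on $A_i^{\ep}$ as above gives $\nm{\uk}_{L^2(\p B_{\ep}(\v d_i))}\leq C\nm{\uk}_{H^1(\Omega_{\ep})}$, so $\big|\int_{\p B_{\ep}(\v d_i)}\uk(C\u_j+R\w_j)\cdot n\,\ud s\big|\leq C\sqrt{\ep}\,\nm{\uk}_{H^1(\Omega_{\ep})}$. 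Collecting all such terms for $\uk$ and $\wk$ bounds the boundary part of $I_{\ep}$ by $C_2\big(\nm{\uk}_{H^1(\Omega_{\ep})}+\nm{\wk}_{H^1(\Omega_{\ep})}\big)$, $C_2$ independent of $\ep$, which proves the stated coercivity. From it, with $X:=(\nm{\uk}_{H^1}^2+\nm{\wk}_{H^1}^2)^{1/2}$, $I_{\ep}\geq C_1X^2-\sqrt2\,C_2X\geq-C_2^2/(2C_1)$ uniformly, and since $I_{\ep}[0,0]=0$ we have $\inf I_{\ep}\in[-C_2^2/(2C_1),0]$; a minimizing sequence is then bounded in $H^1(\Omega_{\ep})\times H^1(\Omega_{\ep})$ by a constant depending only on $C_1,C_2$, and its weak limit $(u_{\ep},w_{\ep})$ is admissible (the constraint $\int_B\cdot=0$ is weakly closed) and minimizes $I_{\ep}$, since $J_{\ep}[\na\cdot,\na\cdot]$ is convex and strongly continuous, hence weakly lower semicontinuous, while the boundary terms are bounded linear, hence weakly continuous. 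Feeding $(u_{\ep},w_{\ep})$ back into the coercivity bound against $\inf I_{\ep}\leq0$ gives $\nm{u_{\ep}}_{H^1}^2+\nm{w_{\ep}}_{H^1}^2\leq M:=2C_2^2/C_1^2$, independent of $\ep$; uniqueness follows from strict convexity of $I_{\ep}$ in $(\na\uk,\na\wk)$ and injectivity of $\uk\mapsto\na\uk$ on $\{\int_B\uk=0\}$ (using connectedness of $\Omega_{\ep}$), or directly from the uniqueness of the Euler--Lagrange system already established above.

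The only genuinely delicate point is the $\ep$-independence of the Poincaré and trace constants: a soft compactness/contradiction argument is awkward because the competitors live on the moving domain $\Omega_{\ep}$, so one is forced to split off the fixed bulk $\Omega_{\ep_0}$, where all constants are classical, and to treat the thin annuli $B_{\ep_0}(\v d_i)\setminus B_{\ep}(\v d_i)$ by the explicit radial computation above — the crucial feature being that the logarithmic weight $\ln(\ep_0/r)$ is integrable against $r\,\ud r$, so nothing diverges as $\ep\to0$. Everything else (the pointwise density bound, weak lower semicontinuity, strict convexity) is routine.
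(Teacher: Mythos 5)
Your proposal is correct and follows essentially the same route as the paper: a pointwise positive-definiteness bound on the density, an $\ep$-uniform Poincar\'{e} inequality under the normalization $\int_B\uk=\int_B\wk=0$, uniform trace bounds for the boundary terms, and then coercivity plus strict convexity to get existence, the uniform bound $M$, and uniqueness. The only difference is one of detail: the paper simply cites the $\ep$-independent Poincar\'{e} and trace estimates on the perforated domain from \cite{Cermelli.Leoni2005}, whereas you prove them by splitting off the fixed bulk $\Omega_{\ep_0}$ and handling the thin annuli $B_{\ep_0}(\v d_i)\setminus B_{\ep}(\v d_i)$ with the explicit radial/logarithmic computation, which is exactly the argument behind that citation.
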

\begin{proof}
Recall that
\begin{eqnarray*}
I_{\ep}[\uk,\wk]:&=&J_{\ep}[\na \uk,\na \wk]+\sum_{i=1}^N\int_{\p\Omega}\bigg(\uk(C\u_i+R\w_i)+\wk(K\w_i+R\u_i)\bigg)\cdot n\ud{s}\no\\
&&-\sum_{i=1}^N\sum_{j\neq i}\int_{\p B_{\ep}(x_i,y_i)}\bigg(\uk(C\u_j+R\w_j)+\wk(K\w_j+R\u_j)\bigg)\cdot n\ud{s}.\no
\end{eqnarray*}
Since $\f$ is positive definite, we directly estimate
\begin{eqnarray*}
I_{\ep}[\uk,\wk]&\geq&C\int_{\Omega_{\ep}}\bigg(\abs{\na \uk}^2+\abs{\na \wk}^2\bigg)\ud{x}\ud{y}-C'\int_{\p\Omega}\bigg(\abs{\uk}+\abs{\wk}\bigg)\ud{s}-C'\int_{\p B_{\ep}(x_i,y_i)}\bigg(\abs{\uk}+\abs{\wk}\bigg)\ud{s}.\no
\end{eqnarray*}
By Poincar\'{e}'s inequality (see \cite{Cermelli.Leoni2005}), we have for $C_1>0$ independent of $\ep$,
\begin{eqnarray*}
\int_{\Omega_{\ep}}\abs{\na \uk}^2\ud{x}\ud{y}\geq C_1\nm{\uk}_{H^1(\Omega_{\ep})}^2,\\
\int_{\Omega_{\ep}}\abs{\na \wk}^2\ud{x}\ud{y}\geq C_1\nm{\wk}_{H^1(\Omega_{\ep})}^2.
\end{eqnarray*}
In these two estimates, the normalization condition is essential. Also, we have for $C_2>0$ independent of $\ep$,
\begin{eqnarray}
\int_{\p\Omega}\bigg(\abs{\uk}+\abs{\wk}\bigg)\ud{s}&\leq& C_2\bigg(\nm{\uk}_{H^1(\Omega_{\ep})}+\nm{\wk}_{H^1(\Omega_{\ep})}\bigg),\label{tt 1}\\
\int_{\p B_{\ep}(x_i,y_i)}\bigg(\abs{\uk}+\abs{\wk}\bigg)\ud{s}&\leq& C_2\bigg(\nm{\uk}_{H^1(\Omega_{\ep})}+\nm{\wk}_{H^1(\Omega_{\ep})}\bigg)\label{tt 2}.
\end{eqnarray}
Hence, the coercivity is naturally valid, i.e.
\begin{eqnarray*}
I_{\ep}[\uk,\wk]\geq C_1\bigg(\nm{\uk}_{H^1(\Omega_{\ep})}^2+\nm{\wk}_{H^1(\Omega_{\ep})}^2\bigg)-C_2\bigg(\nm{\uk}_{H^1(\Omega_{\ep})}+\nm{\wk}_{H^1(\Omega_{\ep})}\bigg).
\end{eqnarray*}
Since $I_{\ep}$ is strictly convex (see \cite{Cermelli.Leoni2005}) and $I_{\ep}[0,0]=0$, the existence and uniqueness follow.
\end{proof}

We have established the following result.
\begin{theorem}\label{prelim}
Assume that (\ref{assumption}) holds. The problem (\ref{problem.}) admits a unique solution
\begin{eqnarray*}
\u_{\ep}=\sum_{i=1}^N\u_i+\na u_{\ep},\qquad \w_{\ep}=\sum_{i=1}^N\w_i+\na w_{\ep},
\end{eqnarray*}
where
\begin{eqnarray*}
\u_i&=&\frac{b_u^i}{2\pi}\frac{1}{(x-x_i)^2+(y-y_i)^2}\bigg(-(y-y_i),(x-x_i)\bigg),\\
\w_i&=&\frac{b_w^i}{2\pi}\frac{1}{(x-x_i)^2+(y-y_i)^2}\bigg(-(y-y_i),(x-x_i)\bigg),
\end{eqnarray*}
and $(u_{\ep},w_{\ep})$ is the minimizer of
\begin{eqnarray*}
I_{\ep}[u_{\ep},w_{\ep}]:&=&J_{\ep}[\na u_{\ep},\na w_{\ep}]+\sum_{i=1}^N\int_{\p\Omega}\bigg(u_{\ep}(C\u_i+R\w_i)+w_{\ep}(K\w_i+R\u_i)\bigg)\cdot n\ud{s}\no\\
&&-\sum_{i=1}^N\sum_{j\neq i}\int_{\p B_{\ep}(x_i,y_i)}\bigg(u_{\ep}(C\u_j+R\w_j)+w_{\ep}(K\w_j+R\u_j)\bigg)\cdot n\ud{s}.\no
\end{eqnarray*}
subject to
$\displaystyle\int_{B}u_{\ep}\ud{x}\ud{y}=0$ and $\displaystyle\int_{B}w_{\ep}\ud{x}\ud{y}=0$
for some ball $B\subset\Omega_{\ep}$, with $n$ the outward unit normal vector on $\p\Omega$.
\end{theorem}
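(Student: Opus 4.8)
The plan is to assemble this theorem directly from the three lemmas of this section, since all the analytic work has already been done; what remains is to tie the decomposition, the Euler--Lagrange characterization, and the coercivity/uniqueness together into a single statement about the original problem (\ref{problem.}).

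First I would recall the decomposition established at the start of Section 2.3: any admissible $(\uu,\ww)\in H_0^{\ep}$ can be written as $\uu=\sum_{i=1}^N\u_i+\na\uk$, $\ww=\sum_{i=1}^N\w_i+\na\wk$ for some $\uk,\wk\in H^1(\Omega_{\ep})$. Indeed, the explicit fields $\u_i,\w_i$ computed in Section 2.2 are curl-free in $\Omega_\ep$, satisfy $\int_{\p B_\ep(\v d_i)}\u_i\cdot\t\,\ud s=b_u^i$ and $\int_{\p B_\ep(\v d_j)}\u_i\cdot\t\,\ud s=0$ for $j\neq i$ (and likewise for $\w_i$), so $\uu-\sum_i\u_i$ is curl-free with zero circulation around every hole in $\Omega_\ep$, hence a gradient on the (non-simply-connected but with trivial residues) domain $\Omega_\ep$; the same for $\ww-\sum_i\w_i$. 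Then I would substitute this decomposition into $J_\ep[\uu,\ww]$ and expand the quadratic form $\f$, collecting the pure-singular terms $\sum_i J_\ep[\u_i,\w_i]$, the cross terms $\sum_{i<j}\int_{\Omega_\ep}(C\u_i\cdot\u_j+K\w_i\cdot\w_j+R\u_i\cdot\w_j+R\u_j\cdot\w_i)$, and the terms involving $\na\uk,\na\wk$; integrating by parts on the terms that are linear in $(\na\uk,\na\wk)$ — using $\na\cdot(\c\u_i+\rr\w_i)=0$ away from $\v d_i$ — converts them to the boundary integrals over $\p\Omega$ and over the circles $\p B_\ep(\v d_j)$ that appear in the definition of $I_\ep$. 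This is exactly the identity displayed before problem $(M_\ep)$, so since the first two groups of terms do not depend on $(\uk,\wk)$, minimizing $J_\ep$ over $H_0^\ep$ is equivalent to minimizing $I_\ep$ over $H^1(\Omega_\ep)$, and adding a constant to $\uk$ or $\wk$ leaves $I_\ep$ unchanged, which justifies the normalization $\int_B\uk=\int_B\wk=0$ on a fixed ball $B\subset\Omega_\ep$.

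Next, Lemma \ref{mini lemma} supplies the coercivity estimate $I_\ep[\uk,\wk]\geq C_1(\nm{\uk}_{H^1}^2+\nm{\wk}_{H^1}^2)-C_2(\nm{\uk}_{H^1}+\nm{\wk}_{H^1})$ together with strict convexity of $I_\ep$ and $I_\ep[0,0]=0$, which yields existence of a unique minimizer $(u_\ep,w_\ep)$ with $\nm{u_\ep}_{H^1}^2+\nm{w_\ep}_{H^1}^2\leq M$ uniformly in $\ep$. Pushing this back through the decomposition gives the claimed unique minimizer $(\u_\ep,\w_\ep)=(\sum_i\u_i+\na u_\ep,\sum_i\w_i+\na w_\ep)$ of (\ref{problem.}), and the formulas for $\u_i,\w_i$ are copied verbatim from Section 2.2. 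For uniqueness at the level of the full problem I would argue that two minimizers of $J_\ep$ differ by $(\na F,\na G)$ with $F,G\in H^1(\Omega_\ep)$ curl-free with zero circulation (as in Lemma \ref{lemma 1}), whence positive-definiteness of $\f$ forces $\na F=\na G=0$; alternatively this is immediate from the strict convexity already invoked.

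The only genuinely substantive point — and the step I would flag as the main obstacle — is the representation $\uu-\sum_i\u_i=\na\uk$ on the multiply-connected domain $\Omega_\ep$: one must verify that a curl-free $L^2$ vector field on $\Omega_\ep$ whose tangential trace has vanishing integral over each boundary circle $\p B_\ep(\v d_i)$ is in fact a gradient of an $H^1$ function. This requires the (standard, but non-trivial in the low-regularity $L^2$-curl setting) de Rham-type argument that the relevant cohomology class is detected precisely by those circulation integrals, together with the fact that $\Omega$ is simply connected so $\p\Omega$ contributes no extra period; I would invoke the functional-analytic framework for $L^2$ curl-free fields of \cite{Cermelli.Gurtin1999} and \cite{Cermelli.Leoni2005} (already cited for the well-definedness of tangential traces) to close this gap, and everything else is bookkeeping with the quadratic form and the cited lemmas.
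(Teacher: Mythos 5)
Your proposal is correct and follows essentially the same route as the paper: the theorem is assembled from the decomposition $\uu=\sum_i\u_i+\na\uk$, $\ww=\sum_i\w_i+\na\wk$ of Section 2.3, the resulting identity $J_{\ep}[\u_{\ep},\w_{\ep}]=I_{\ep}[u_{\ep},w_{\ep}]+\sum_i J_{\ep}[\u_i,\w_i]+(\text{cross terms})$ reducing the minimization of $J_{\ep}$ to that of $I_{\ep}$, and Lemma \ref{mini lemma} for coercivity, existence and uniqueness. The only difference is that you make explicit the de Rham-type justification of the gradient representation on the multiply-connected $\Omega_{\ep}$, which the paper leaves implicit by appealing to Lemma \ref{lemma 1} and the cited references; this is a reasonable and correct elaboration rather than a departure.
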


This theorems tells us the existence and uniqueness of minimizer in (\ref{problem.}). The asymptotic behaviors of minimizer and energy as $\ep\rt0$ are left open at this stage.

\section{$\Gamma$-Convergence}

In this section, we use higher-order $\Gamma$-convergence to dig more information into the structure of minimizer and energy.

\subsection{Weak-$L^2$ Compactness}

Notice that for any $(\tilde\uu_{\ep},\tilde\ww_{\ep})\in H_0^{\ep}$, using (\ref{assumption}), we have
\begin{eqnarray*}
&&\int_{\Omega_{\ep}}\bigg(C\abs{\tilde\uu_{\ep}}^2+K\abs{\tilde\ww_{\ep}}^2+2R(\tilde\uu_{\ep}\cdot\tilde\ww_{\ep})\bigg)\ud{x}\ud{y}\\
&\geq&\sum_{i=1}^N\int_{B_r(\v d_i)\backslash B_{\ep}(\v d_i)}\bigg(C\abs{\tilde\uu_{\ep}}^2+K\abs{\tilde\ww_{\ep}}^2+2R(\tilde\uu_{\ep}\cdot\tilde\ww_{\ep})\bigg)\ud{x}\ud{y}\no\\
&\geq&C_0\sum_{i=1}^N\int_{B_r(\v d_i)\backslash B_{\ep}(\v d_i)}\bigg(\abs{\tilde\uu_{\ep}}^2+\abs{\tilde\ww_{\ep}}^2\bigg)\ud{x}\ud{y}=C_0\sum_{i=1}^N\int_{\ep}^r\int_{B_{\rho}(\v d_i)}\bigg(\abs{\tilde\uu_{\ep}}^2+\abs{\tilde\ww_{\ep}}^2\bigg)\ud{s}\ud{\rho}\no\\
&\geq&C_0\sum_{i=1}^N\int_{\ep}^r\frac{1}{2\pi\rho}\bigg(\int_{B_{\rho}(\v d_i)}(\tilde\uu_{\ep}\cdot\t+\tilde\ww_{\ep}\cdot\t)\ud{s}\bigg)^2\ud{\rho}=C_0\sum_{i=1}^N\int_{\ep}^r\frac{1}{2\pi\rho}(b_u^i+b_w^i)^2\ud{\rho}\no\\
&=&C_0\sum_{i=1}^N\frac{(b_u^i+b_w^i)^2}{2\pi}\ln\left(\frac{r}{\ep}\right).\no
\end{eqnarray*}
Therefore, we know the energy blows up when $\ep\rt0$. We need a proper scaling in order to show compactness. For the minimizer $(\u_{\ep},\w_{\ep})$, we may directly estimate
\begin{eqnarray}\label{et temp 11}
&&\int_{\Omega_{\ep}}\bigg(C\abs{\u_{\ep}}^2+K\abs{\w_{\ep}}^2+2R(\u_{\ep}\cdot\w_{\ep})\bigg)\ud{x}\ud{y}\\
&=&\sum_{i=1}^N\int_{B_r(\v d_i)\backslash B_{\ep}(\v d_i)}\bigg(C\abs{\u_{\ep}}^2+K\abs{\w_{\ep}}^2+2R(\u_{\ep}\cdot\w_{\ep})\bigg)\ud{x}\ud{y}\no\\
&&+\int_{\Omega_{r}}\bigg(C\abs{\u_{\ep}}^2+K\abs{\w_{\ep}}^2+2R(\u_{\ep}\cdot\w_{\ep})\bigg)\ud{x}\ud{y}\no\\
&\leq&C_0\sum_{i=1}^N\frac{C(b_u^i)^2+K(b_w^i)^2+2R(b_u^i)(b_w^i)}{4\pi}\ln\left(\frac{r}{\ep}\right).\no
\end{eqnarray}
Therefore, we need to consider the scaling $\dfrac{1}{\abs{\ln(\ep)}^{1/2}}$.

Define the functional $J_{\ep}^{(0)}: L^2(\Omega)\times L^2(\Omega)\rt[0,\infty]$ by
\begin{eqnarray*}
J_{\ep}^{(0)}\left[\uu_{\ep},\ww_{\ep}\right]:=\left\{
\begin{array}{ll}
\ds\int_{\Omega_{\ep}}\frac{1}{2}\bigg(C\abs{\uu_{\ep}}^2+K\abs{\ww_{\ep}}^2+2R(\uu_{\ep}\cdot\ww_{\ep})\bigg)\ud{x}\ud{y}\\
\qquad\ \ \text{if}\ \ (\uu_{\ep},\ww_{\ep})=\left(\dfrac{\tilde\uu_{\ep}}{\abs{\ln(\ep)}^{1/2}},\dfrac{\tilde\ww_{\ep}}{\abs{\ln(\ep)}^{1/2}}\right)\ \ \text{for some}\ \ (\tilde\uu_{\ep},\tilde\ww_{\ep})\in H_0^{\ep},\\
\infty\ \ \text{otherwise in}\ \ L^2(\Omega)\times L^2(\Omega).
\end{array}
\right.
\end{eqnarray*}

\begin{theorem}(Compactness)\label{compact theorem}
Assume that (\ref{assumption}) holds and $(\uu_{\ep},\ww_{\ep})\in L^2(\Omega)\times L^2(\Omega)$ satisfy
\begin{eqnarray*}
\sup_{\ep>0}J_{\ep}^{(0)}[\uu_{\ep},\ww_{\ep}]\leq C_0.
\end{eqnarray*}
Then there exists $v_u,v_w\in H^1(\Omega)$ such that up to the extraction of subsequence (non-relabelled),
\begin{eqnarray*}
\left({\bf{1}}_{\Omega_{\ep}}\uu_{\ep},{\bf{1}}_{\Omega_{\ep}}\ww_{\ep}\right)\rightharpoonup(\na v_u,\na v_w)\ \text{in}\ \text{weak}-L^2\ \text{as}\ \ep\rt0.
\end{eqnarray*}
\end{theorem}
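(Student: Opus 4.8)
The plan is to establish compactness in two stages: first obtain a uniform $L^2$ bound on $(\mathbf{1}_{\Omega_\ep}\uu_\ep, \mathbf{1}_{\Omega_\ep}\ww_\ep)$ to extract a weakly convergent subsequence, then identify the limit as a pair of gradients in $H^1(\Omega)$. The starting point is the energy bound $\sup_\ep J_\ep^{(0)}[\uu_\ep,\ww_\ep]\le C_0$. By definition of $J_\ep^{(0)}$, we may write $(\uu_\ep,\ww_\ep)=\big(\tilde\uu_\ep/\abs{\ln\ep}^{1/2},\tilde\ww_\ep/\abs{\ln\ep}^{1/2}\big)$ with $(\tilde\uu_\ep,\tilde\ww_\ep)\in H_0^\ep$. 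Since the quadratic form with coefficients $C,K,R$ is positive definite under (\ref{assumption}), the energy bound gives
\begin{eqnarray*}
\int_{\Omega_\ep}\bigg(\abs{\uu_\ep}^2+\abs{\ww_\ep}^2\bigg)\ud{x}\ud{y}\le C_0',
\end{eqnarray*}
uniformly in $\ep$. Extending by zero on $\bigcup_i B_\ep(\v d_i)$ (a set of vanishing measure), the extended fields are bounded in $L^2(\Omega)\times L^2(\Omega)$, so up to a subsequence $\big(\mathbf{1}_{\Omega_\ep}\uu_\ep,\mathbf{1}_{\Omega_\ep}\ww_\ep\big)\rightharpoonup(\mathbf{U},\mathbf{W})$ weakly in $L^2$ for some limit pair $(\mathbf{U},\mathbf{W})$.

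The second stage is to show $\mathbf{U}=\na v_u$ and $\mathbf{W}=\na v_w$ for some $v_u,v_w\in H^1(\Omega)$. For this I would test against smooth compactly supported fields. The key observation is that for a test function $\varphi\in C_c^\infty(\Omega;\r^2)$ with $\na\times\varphi$ supported away from all the $\v d_i$, we have, for $\ep$ small enough that the support of $\na\times\varphi$ avoids the cores,
\begin{eqnarray*}
\int_{\Omega_\ep}\uu_\ep\cdot\varphi^\perp\,\ud{x}\ud{y}=-\int_{\Omega_\ep}(\na\times\uu_\ep)\,\psi\,\ud{x}\ud{y}+\text{boundary terms}=0,
\end{eqnarray*}
using $\na\times\uu_\ep=0$ in $\Omega_\ep$; passing to the limit shows $\na\times\mathbf{U}=0$ in $\mathcal{D}'(\Omega)$, since any distributional curl is then supported at finitely many points, and a distribution supported at a point that is a curl of an $L^2$ field must vanish (by a scaling/capacity argument, or directly: a finite sum of derivatives of Dirac masses cannot lie in $H^{-1}$ with the required homogeneity while being the curl of an $L^2$ vector field). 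The same argument applies to $\ww_\ep$. Since $\Omega$ is simply connected, $\na\times\mathbf{U}=0$ in $\Omega$ yields $\mathbf{U}=\na v_u$ for some $v_u\in H^1(\Omega)$, and similarly $\mathbf{W}=\na v_w$.

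The main obstacle is the curl identification across the shrinking holes: a priori $\na\times\uu_\ep$, viewed as a distribution on all of $\Omega$, picks up a contribution concentrated on $\p B_\ep(\v d_i)$ carrying the circulation $b_u^i$, so one must show this contribution does not survive in the weak limit. The clean way is to argue that for the limit $\mathbf{U}$ the distributional curl is a priori some measure supported on $\{\v d_1,\dots,\v d_N\}$, hence a finite linear combination of Diracs and their derivatives; but $\mathbf{U}\in L^2$ forces $\na\times\mathbf{U}\in H^{-1}(\Omega)$, and $\delta_{\v d_i}\notin H^{-1}(\r^2)$, so all coefficients vanish. Combined with simple connectedness this gives the gradient structure. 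I would then note that the $H^1$ bound on $v_u,v_w$ follows from the $L^2$ bound on the gradients together with a choice of normalization (e.g. zero average on a fixed ball), so that $v_u,v_w\in H^1(\Omega)$ as claimed. The passage from weak-$L^2$ convergence of $\mathbf{1}_{\Omega_\ep}\uu_\ep$ to the stated convergence is immediate once the limit is identified.
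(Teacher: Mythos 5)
Your proposal is correct, and it takes a genuinely different route from the paper. The paper first subtracts the explicit single--dislocation fields $\u_i,\w_i$ from $\tilde\uu_{\ep},\tilde\ww_{\ep}$, which kills both the curl and the circulations so that the remainders are exact gradients $\na\uk_{\ep},\na\wk_{\ep}$ on $\Omega_{\ep}$; it then extends these potentials to all of $\Omega$ with uniform $H^1$ control (an extension lemma borrowed from Cermelli--Leoni), extracts a weak-$H^1$ limit of the rescaled extensions, and separately shows $\u_i/\abs{\ln(\ep)}^{1/2}\rightharpoonup 0$ in $L^2$ by testing against $C_0^\infty$ functions and using the uniform $L^1$ bound on $\u_i$. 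You instead extract the weak-$L^2$ limit $(\mathbf{U},\mathbf{W})$ directly from the coercivity of the quadratic form and identify it by a distributional argument: the curl of the limit is supported on the finite set $\{\v d_1,\dots,\v d_N\}$, hence a combination of Diracs and their derivatives, which cannot lie in $H^{-1}$ of a planar domain unless it vanishes; simple connectedness and Poincar\'e then give $\mathbf{U}=\na v_u$ with $v_u\in H^1(\Omega)$. Your approach is shorter and avoids the extension lemma entirely, and it isolates the real reason the circulations disappear ($L^2$ regularity of the limit is incompatible with a point-supported curl); the paper's decomposition is heavier here but is not wasted, since the splitting $\tilde\uu_{\ep}=\sum_i\u_i+\na\uk_{\ep}$ is reused throughout the subsequent $\liminf$/$\limsup$ arguments. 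One cosmetic remark: your first integration-by-parts display (with $\varphi^\perp$ and an unnamed $\psi$) is garbled as written, but the ``clean way'' paragraph that follows states the correct and complete argument, so nothing essential is missing.
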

\begin{proof}
We use the notation as in the definition of $J_{\ep}^{(0)}$. Using the solution of a single dislocation $(\u_i,\w_i)$ in (\ref{mt temp 12}) and (\ref{mt temp 13}), recalling the definition of $H_0^{\ep}$, we have
\begin{eqnarray*}
\na\times\bigg(\tilde\uu_{\ep}-\sum_{i=1}^N\u_i\bigg)=\na\times\bigg(\tilde\ww_{\ep}-\sum_{i=1}^N\w_i\bigg)=0,\\
\int_{\p B_{\ep}(\v d_i)}\bigg(\tilde\uu_{\ep}-\sum_{i=1}^N\u_i\bigg)\cdot\t\ud{s}=\int_{\p B_{\ep}(\v d_i)}\bigg(\tilde\ww_{\ep}-\sum_{i=1}^N\w_i\bigg)\cdot\t\ud{s}=0.
\end{eqnarray*}
Therefore, using the analysis of Lemma \ref{lemma 1}, we obtain
\begin{eqnarray*}
\tilde\uu_{\ep}-\sum_{i=1}^N\u_i&=&\na \uk_{\ep},\\
\tilde\ww_{\ep}-\sum_{i=1}^N\w_i&=&\na \wk_{\ep},
\end{eqnarray*}
for some $\uk_{\ep},\wk_{\ep}\in H^1(\Omega_{\ep})$. Also, because of (\ref{et temp 11}) and
\begin{eqnarray*}
\int_{\Omega_{\ep}}\bigg(C\abs{\u_{i}}^2+K\abs{\w_{i}}^2+2R(\u_{i}\cdot\w_{i})\bigg)\ud{x}\ud{y}\leq C_0\abs{\ln(\ep)},
\end{eqnarray*}
we know that
\begin{eqnarray*}
\int_{\Omega_{\ep}}\bigg(C\abs{\na \uk_{\ep}}^2+K\abs{\na \wk_{\ep}}^2+2R(\na \uk_{\ep}\cdot\na \wk_{\ep})\bigg)\ud{x}\ud{y}\leq C_0\abs{\ln(\ep)}.
\end{eqnarray*}
In turn, by Poincar\'{e}'s inequality, we have
\begin{eqnarray*}
\nm{\uk_{\ep}}_{H^1(\Omega_{\ep})}+\nm{\wk_{\ep}}_{H^1(\Omega_{\ep})}\leq C\abs{\ln(\ep)}.
\end{eqnarray*}
We can define a natural extension (see \cite{Cermelli.Leoni2005}) of $(\uk_{\ep},\wk_{\ep})$ from $\Omega_{\ep}$ to $\Omega$ as $(\hat \uk_{\ep},\hat \wk_{\ep})$ such that
\begin{eqnarray*}
\nm{\hat \uk_{\ep}}_{H^1(\Omega)}+\nm{\hat \wk_{\ep}}_{H^1(\Omega)}\leq C\abs{\ln(\ep)}.
\end{eqnarray*}
It is easy to see that up to extracting a subsequence,
\begin{eqnarray*}
\left(\frac{\hat \uk_{\ep}}{\abs{\ln(\ep)}^{1/2}},\frac{\hat \wk_{\ep}}{\abs{\ln(\ep)}^{1/2}}\right)\rightharpoonup (v_u,v_w),
\end{eqnarray*}
in weak-$H^1(\Omega)$ for some $(v_u,v_w)\in H^1(\Omega)$.
On the other hand, note that $\u_i,\w_i\notin L^2(\Omega)$, but $\u_i,\w_i\in L^p(\Omega)$ for any $1\leq p<2$, and also
\begin{eqnarray*}
\int_{\Omega_{\ep}}\bigg(\abs{\u_i}^2+\abs{\w_i}^2\bigg)\ud{x}\ud{y}\leq C\abs{\ln(\ep)}.
\end{eqnarray*}
Hence, we know that up to extracting a subsequence
\begin{eqnarray*}
\left(\frac{{\bf{1}}_{\Omega_{\ep}}\u_{i}}{\abs{\ln(\ep)}^{1/2}},\frac{{\bf{1}}_{\Omega_{\ep}}\w_{i}}{\abs{\ln(\ep)}^{1/2}}\right)\rightharpoonup (U^i,W^i),
\end{eqnarray*}
in weak-$L^2(\Omega)$, for some $U^i,W^i\in L^2(\Omega)$. Taking $\phi,\psi\in C_0^{\infty}(\Omega)$, we have
\begin{eqnarray*}
\int_{\Omega_{\ep}}\frac{\u_i\phi+\w_i\psi}{\abs{\ln(\ep)}^{1/2}}\ud{x}\ud{y}
&\leq&\frac{\nm{\u_i}_{L^1}\nm{\phi}_{L^{\infty}}+\nm{\w_i}_{L^1}\nm{\psi}_{L^{\infty}}}{\abs{\ln(\ep)}^{1/2}}\leq\frac{C}{\abs{\ln(\ep)}^{1/2}}\rt0\ \ \text{as}\ \ \ep\rt0.
\end{eqnarray*}
Therefore, we must have $U^i=W^i=0$, i.e.
\begin{eqnarray*}
\left(\frac{{\bf{1}}_{\Omega_{\ep}}\u_{i}}{\abs{\ln(\ep)}^{1/2}},\frac{{\bf{1}}_{\Omega_{\ep}}\w_{i}}{\abs{\ln(\ep)}^{1/2}}\right)\rightharpoonup (0,0).
\end{eqnarray*}
Thus define
\begin{eqnarray*}
\hat\uu_{\ep}:&=&\sum_{i=1}^N\u_i+\na \hat \uk_{\ep},\\
\hat\ww_{\ep}:&=&\sum_{i=1}^N\w_i+\na \hat \wk_{\ep}.
\end{eqnarray*}
such that $\hat\uu_{\ep}=\tilde\uu_{\ep}$ and $\hat\ww_{\ep}=\tilde\ww_{\ep}$ in $\Omega_{\ep}$. In summary, we have shown that
\begin{eqnarray*}
\left(\frac{{\bf{1}}_{\Omega_{\ep}}\tilde\uu_{\ep}}{\abs{\ln(\ep)}^{1/2}},\frac{{\bf{1}}_{\Omega_{\ep}}\tilde\ww_{\ep}}{\abs{\ln(\ep)}^{1/2}}\right)=
\left(\frac{{\bf{1}}_{\Omega_{\ep}}\hat\uu_{\ep}}{\abs{\ln(\ep)}^{1/2}},\frac{{\bf{1}}_{\Omega_{\ep}}\hat\ww_{\ep}}{\abs{\ln(\ep)}^{1/2}}\right)\rightharpoonup
\left(\frac{{\bf{1}}_{\Omega_{\ep}}\na\hat \uk_{\ep}}{\abs{\ln(\ep)}^{1/2}},\frac{{\bf{1}}_{\Omega_{\ep}}\na\hat \wk_{\ep}}{\abs{\ln(\ep)}^{1/2}}\right)\rightharpoonup(\na v_u,\na v_w),
\end{eqnarray*}
in weak-$L^2(\Omega)$.

\end{proof}

\subsection{Zeroth-Order $\Gamma$-Convergence}

\begin{theorem}( $0^{th}$-Order $\Gamma$-Convergence )
Assume that (\ref{assumption}) holds. Define the functional $J_0^{(0)}: L^2(\Omega)\times L^2(\Omega)\rt[0,\infty]$ as
\begin{eqnarray*}
J_{0}^{(0)}[\uu,\ww]:=\left\{
\begin{array}{ll}
\ds\int_{\Omega}\frac{1}{2}\bigg(C\abs{\na v_u}^2+K\abs{\na v_w}^2+2R(\na v_u\cdot\na v_w)\bigg)+\sum_{i=1}^N\frac{C(b_u^i)^2+K(b_w^i)^2+2R(b_u^i)(b_w^i)}{4\pi}&\\
\qquad\qquad\qquad\qquad\qquad\qquad\ \ \text{if}\ \ (\uu,\ww)=\left(\na v_u, \na v_w\right)\ \ \text{for}\ \ \text{some}\ \ v_u,v_w\in H^1(\Omega),\\
\infty\qquad\ \ \text{otherwise in}\ \ L^2(\Omega)\times L^2(\Omega).
\end{array}
\right.
\end{eqnarray*}
Then
\begin{enumerate}
\item
For any sequence of pairs $(\uu_{\ep},\ww_{\ep})\in L^2(\Omega)\times L^2(\Omega)$ such that $\left(\uu_{\ep},\ww_{\ep}\right)\rightharpoonup(\uu,\ww)$ in weak-$L^2(\Omega)$, we have $\liminf_{\ep\rt0}J_{\ep}^{(0)}[\uu_{\ep},\ww_{\ep}]\geq J_{0}^{(0)}[\na v_u,\na v_w]$.
\item
There exists a sequence of pairs $(\uu_{\ep},\ww_{\ep})\in L^2(\Omega)\times L^2(\Omega)$ such that $\left(\uu_{\ep},\ww_{\ep}\right)\rightharpoonup(\uu,\ww)$ in weak-$L^2(\Omega)$, we have $\limsup_{\ep\rt0}J_{\ep}^{(0)}[\uu_{\ep},\ww_{\ep}]\leq J_{0}^{(0)}[\na v_u,\na v_w]$,
\end{enumerate}
which means
\begin{eqnarray*}
J_{\ep}^{(0)}[\uu_{\ep},\ww_{\ep}]\rt J_0^{(0)}[\uu,\ww],
\end{eqnarray*}
in the sense of $\Gamma$-convergence in weak-$L^2(\Omega)$
\end{theorem}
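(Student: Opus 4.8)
The plan is to check the two inequalities that characterize $\Gamma$-convergence in the weak-$L^2(\Omega)$ topology; granting both, the statement $J_\ep^{(0)}\rt J_0^{(0)}$ is immediate. Each inequality is trivial when the relevant quantity is $+\infty$, so we work with finite energies, in which regime Theorem \ref{compact theorem} forces the weak limit to be a gradient. Write $Q(s,t):=Cs^2+Kt^2+2Rst$ for the quadratic form, which is positive definite by (\ref{assumption}). Decomposing any $\uu,\ww\in\r^2$ in an orthonormal frame $(\t,n)$ adapted to a circle $\p B_\rho(\v d_i)$ gives the identity $2\f[\uu,\ww]=Q(\uu\cdot\t,\ww\cdot\t)+Q(\uu\cdot n,\ww\cdot n)$, hence the one-sided bound $2\f[\uu,\ww]\ge Q(\uu\cdot\t,\ww\cdot\t)$; this is the estimate that will produce the sharp core term $\sum_i\frac{Q(b_u^i,b_w^i)}{4\pi}$.

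\emph{Lower bound.} Let $(\uu_\ep,\ww_\ep)\rightharpoonup(\uu,\ww)$ weakly in $L^2$ with $L:=\liminf_{\ep\rt0}J_\ep^{(0)}[\uu_\ep,\ww_\ep]<\infty$, and pass to a subsequence along which $J_\ep^{(0)}\rt L$; then $(\uu_\ep,\ww_\ep)=(\tilde\uu_\ep/\abs{\ln\ep}^{1/2},\tilde\ww_\ep/\abs{\ln\ep}^{1/2})$ with $(\tilde\uu_\ep,\tilde\ww_\ep)\in H_0^\ep$, and by Theorem \ref{compact theorem} the limit is $(\na v_u,\na v_w)$ for some $v_u,v_w\in H^1(\Omega)$, with $\i_{\Omega_\ep}\uu_\ep\rightharpoonup\na v_u$ and $\i_{\Omega_\ep}\ww_\ep\rightharpoonup\na v_w$. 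Fix small $r>0$ so that the balls $B_r(\v d_i)$ are pairwise disjoint and contained in $\Omega$, and for $\ep<r$ decompose the integral over $\Omega_\ep$ into the part on $\Omega\setminus\bigcup_iB_r(\v d_i)$ and the parts on $B_r(\v d_i)\setminus B_\ep(\v d_i)$. On the fixed outer set the restrictions of $\uu_\ep,\ww_\ep$ converge weakly to $\na v_u,\na v_w$, so, $\f$ being a nonnegative convex quadratic, the outer part is weakly lower semicontinuous. On each annulus, since $\tilde\uu_\ep,\tilde\ww_\ep$ are curl-free in $\Omega_\ep$, the circulations $\int_{\p B_\rho(\v d_i)}\tilde\uu_\ep\cdot\t\ud{s}=b_u^i$ and $\int_{\p B_\rho(\v d_i)}\tilde\ww_\ep\cdot\t\ud{s}=b_w^i$ hold for every $\rho\in[\ep,r]$; applying $2\f\ge Q(\uu\cdot\t,\ww\cdot\t)$ and then Jensen's inequality for the convex $Q$ on the circle $\p B_\rho(\v d_i)$ yields $\int_{\p B_\rho(\v d_i)}\f[\uu_\ep,\ww_\ep]\ud{s}\ge\frac{Q(b_u^i,b_w^i)}{4\pi\rho\abs{\ln\ep}}$, and integrating over $\rho\in[\ep,r]$ gives $\int_{B_r(\v d_i)\setminus B_\ep(\v d_i)}\f[\uu_\ep,\ww_\ep]\ge\frac{Q(b_u^i,b_w^i)}{4\pi}\left(1+\frac{\ln r}{\abs{\ln\ep}}\right)$. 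By superadditivity of $\liminf$, $L\ge\int_{\Omega\setminus\bigcup_iB_r(\v d_i)}\f[\na v_u,\na v_w]+\sum_{i=1}^N\frac{Q(b_u^i,b_w^i)}{4\pi}$, and letting $r\rt0$ (monotone convergence on the first term) gives $L\ge J_0^{(0)}[\na v_u,\na v_w]$.

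\emph{Upper bound.} If $(\uu,\ww)$ is not a gradient, $J_0^{(0)}=+\infty$ and there is nothing to prove; otherwise $(\uu,\ww)=(\na v_u,\na v_w)$, $v_u,v_w\in H^1(\Omega)$, and we set $\tilde\uu_\ep:=\abs{\ln\ep}^{1/2}\na v_u+\sum_{i=1}^N\u_i$ and $\tilde\ww_\ep:=\abs{\ln\ep}^{1/2}\na v_w+\sum_{i=1}^N\w_i$, which lie in $H_0^\ep$ (curl-free in $\Omega_\ep$, with circulation $b_u^i$, resp.\ $b_w^i$, about $\p B_\ep(\v d_i)$, since only the $i$-th term contributes), and put $(\uu_\ep,\ww_\ep):=(\tilde\uu_\ep,\tilde\ww_\ep)/\abs{\ln\ep}^{1/2}$ on $\Omega_\ep$, extended to the cores by $(\na v_u,\na v_w)$. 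Since $\abs{\ln\ep}^{-1/2}\i_{\Omega_\ep}\u_i\rightharpoonup0$ and $\abs{\ln\ep}^{-1/2}\i_{\Omega_\ep}\w_i\rightharpoonup0$ in $L^2(\Omega)$ (bounded in $L^2(\Omega)$ and vanishing against $C_0^\infty$ test functions, exactly as in the proof of Theorem \ref{compact theorem}), we get $(\uu_\ep,\ww_\ep)\rightharpoonup(\na v_u,\na v_w)$. Expanding the quadratic form, $J_\ep^{(0)}[\uu_\ep,\ww_\ep]=\int_{\Omega_\ep}\f[\na v_u,\na v_w]+\frac{1}{\abs{\ln\ep}}\int_{\Omega_\ep}\f[\sum_i\u_i,\sum_i\w_i]+\abs{\ln\ep}^{-1/2}\times(\text{bilinear terms})$. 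The first summand tends to $\int_\Omega\f[\na v_u,\na v_w]$ by dominated convergence; in the second, the off-diagonal integrals $\int_{\Omega_\ep}\u_i\cdot\u_j$, $\int_{\Omega_\ep}\u_i\cdot\w_j$, etc.\ ($i\ne j$) stay bounded in $\ep$ while $\int_{\Omega_\ep}\f[\u_i,\w_i]=\frac{Q(b_u^i,b_w^i)}{4\pi}\abs{\ln\ep}+O(1)$ by the explicit formulas for $\u_i,\w_i$, so the second summand tends to $\sum_i\frac{Q(b_u^i,b_w^i)}{4\pi}$; and each bilinear term $\int_{\Omega_\ep}\na v_u\cdot\u_i$ (and its three analogues) is $O(1)$, since $\na\cdot\u_i=0$ in $\Omega_\ep$ and $\u_i\cdot n=0$ on $\p B_\ep(\v d_i)$, so integration by parts leaves only a fixed finite boundary integral over $\p\Omega$ plus vanishing contributions over the other cores. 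Hence $J_\ep^{(0)}[\uu_\ep,\ww_\ep]\rt J_0^{(0)}[\na v_u,\na v_w]$, which is stronger than the required $\limsup$ bound.

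The two delicate points are precisely these. In the lower bound it is the circle-by-circle Jensen estimate yielding the constant $\frac{1}{4\pi}$, where one must exploit that the circulation identity on $\p B_\rho(\v d_i)$ holds for every intermediate radius $\rho$, not just $\rho=\ep$; the decomposition of the energy density into its tangential and normal parts, together with positive definiteness of $Q$, is what reduces the problem to this scalar Jensen inequality. In the upper bound it is the cancellation of the bilinear cross terms, which rests on $\u_i,\w_i$ being divergence-free and tangential to the core circles. If one prefers, the latter can be replaced by a density argument: prove the $\limsup$ bound first for smooth $v_u,v_w$, then pass to general $v_u,v_w\in H^1(\Omega)$ using lower semicontinuity of the $\Gamma$-upper limit, density of $C^\infty(\overline{\Omega})$ in $H^1(\Omega)$, and continuity of $(v_u,v_w)\mapsto J_0^{(0)}[\na v_u,\na v_w]$.
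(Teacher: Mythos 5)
Your proposal is correct, and its lower bound takes a genuinely different route from the paper's. The paper proves the $\liminf$ inequality by first invoking the decomposition $\tilde\uu_{\ep}=\sum_i\u_i+\na\uk_{\ep}$, $\tilde\ww_{\ep}=\sum_i\w_i+\na\wk_{\ep}$ from the compactness argument and then expanding the energy into six pieces: the outer elastic part (weak lower semicontinuity), the diagonal self-energies of the explicit fields $\u_i,\w_i$ (direct computation, producing the core constant $\frac{C(b_u^i)^2+K(b_w^i)^2+2Rb_u^ib_w^i}{4\pi}$), a nonnegative gradient part, and three families of cross terms that must be shown to vanish after division by $\abs{\ln\ep}$. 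You instead keep the fields intact, split only the \emph{domain} into a fixed outer region plus annuli $B_r(\v d_i)\setminus B_{\ep}(\v d_i)$, and on each circle $\p B_\rho(\v d_i)$ use the pointwise identity $2\f=Q(\cdot\,\t)+Q(\cdot\, n)\ge Q(\uu\cdot\t,\ww\cdot\t)$ together with Jensen's inequality and the radius-independent circulation constraint to extract exactly $\frac{Q(b_u^i,b_w^i)}{4\pi}$; superadditivity of $\liminf$ then finishes. This is the Ginzburg--Landau-style circle-by-circle lower bound (the paper already uses a cruder Cauchy--Schwarz version of it in Section 3.1 to show blow-up, but not to prove the theorem), and it buys you a cleaner argument: no need to justify that the cross terms $IV$, $V$, $VI$ vanish, which the paper handles somewhat informally. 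What the paper's decomposition buys in return is reusability: the same six-term expansion, with the $O(1)$ contributions retained rather than discarded, is precisely what drives the first-order $\Gamma$-limit, whereas your Jensen step throws away the normal component and the tangential oscillation and therefore cannot be upgraded to capture the renormalized energy. Your $\limsup$ construction is the same recovery sequence as the paper's, with the useful extra observation that the bilinear cross terms are genuinely $O(1)$ (via $\na\cdot\u_i=0$ and $\u_i\cdot n=0$ on $\p B_{\ep}(\v d_i)$) rather than merely $O(\abs{\ln\ep}^{1/2})$ as crude Cauchy--Schwarz would give --- a point worth making explicit, since $O(\abs{\ln\ep}^{1/2})$ would not suffice after division by $\abs{\ln\ep}$... it would, but only barely, giving $O(\abs{\ln\ep}^{-1/2})\rt 0$; in any case your integration-by-parts justification is the right one and matches the paper's treatment of term $IV$ in Step 2. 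Both arguments share the same reliance on Theorem \ref{theorem 2.} to identify the weak limit as a pair of gradients, and both leave the same minor measure-theoretic points (slicing of $L^2$ curl-free fields onto a.e.\ circle, replacing $\uu_{\ep}$ by ${\bf{1}}_{\Omega_{\ep}}\uu_{\ep}$ in the weak limit) at the same level of detail.
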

\begin{proof}
We divide the proof into two steps:\\
\ \\
Step 1: $\liminf$.\\
Assume that $(\tilde\uu_{\ep},\tilde\ww_{\ep})\in H_0^{\ep}$, $\left(\dfrac{\tilde\uu_{\ep}}{\abs{\ln(\ep)}^{1/2}},\dfrac{\tilde\ww_{\ep}}{\abs{\ln(\ep)}^{1/2}}\right)\rightharpoonup(\uu,\ww)$ and $J_{0}^{(0)}[\na v_u,\na v_w]$ is finite. Then due to weak convergence in $L^2$ and quadratic $\f$, we know $J_{\ep}^{(0)}[\tilde\uu_{\ep},\tilde\ww_{\ep}]\leq C_0\abs{\ln(\ep)}$. Based on compactness and Theorem \ref{compact theorem}, we must have
\begin{eqnarray*}
\left(\frac{{\bf{1}}_{\Omega_{\ep}}\tilde\uu_{\ep}}{\abs{\ln(\ep)}^{1/2}},\frac{{\bf{1}}_{\Omega_{\ep}}\tilde\ww_{\ep}}{\abs{\ln(\ep)}^{1/2}}\right)\rightharpoonup(\na v_u,\na v_w),
\end{eqnarray*}
for some $v_u,v_w\in H^1(\Omega)$, i.e., we must have
\begin{eqnarray*}
(\uu,\ww)=\left(\na v_u, \na v_w\right).
\end{eqnarray*}
Based on
\begin{eqnarray*}
\tilde\uu_{\ep}=\sum_{i=1}^N\u_i+\na \uk_{\ep},\qquad
\tilde\ww_{\ep}=\sum_{i=1}^N\w_i+\na \wk_{\ep},
\end{eqnarray*}
and the fact that
\begin{eqnarray*}
\left(\dfrac{\u_i}{\abs{\ln(\ep)}^{1/2}},\dfrac{\w_i}{\abs{\ln(\ep)}^{1/2}}\right)\rightharpoonup(0,0)\ \text{in}\ \text{weak}-L^2(\Omega),
\end{eqnarray*}
we deduce that
\begin{eqnarray*}
\left(\dfrac{\na\uk_{\ep}}{\abs{\ln(\ep)}^{1/2}},\dfrac{\na\wk_{\ep}}{\abs{\ln(\ep)}^{1/2}}\right)\rightharpoonup\left(\na v_u, \na v_w\right)\ \text{in}\ \text{weak}-L^2(\Omega).
\end{eqnarray*}
Hence, we obtain
\begin{eqnarray*}
\left(\dfrac{\uk_{\ep}}{\abs{\ln(\ep)}^{1/2}},\dfrac{\wk_{\ep}}{\abs{\ln(\ep)}^{1/2}}\right)\rightharpoonup\left( v_u,  v_w\right)\ \text{in}\ \text{weak}-H^1(\Omega).
\end{eqnarray*}
For $r>\ep$, we write
\begin{eqnarray*}
&&\frac{1}{\abs{\ln(\ep)}}\int_{\Omega_{\ep}}\frac{1}{2}\bigg(C\abs{\tilde\uu_{\ep}}^2+K\abs{\tilde\ww_{\ep}}^2+2R(\tilde\uu_{\ep}\cdot\tilde\ww_{\ep})\bigg)\ud{x}\ud{y}\\
&=&\frac{1}{\abs{\ln(\ep)}}\int_{\Omega_{r}}\frac{1}{2}\bigg(C\abs{\tilde\uu_{\ep}}^2+K\abs{\tilde\ww_{\ep}}^2+2R(\tilde\uu_{\ep}\cdot\tilde\ww_{\ep})\bigg)\ud{x}\ud{y}\no\\
&&+\frac{1}{\abs{\ln(\ep)}}\sum_{i=1}^N\int_{B_r(\v d_{i})\backslash B_{\ep}(\v d_{i})}\frac{1}{2}\bigg(C\abs{\u_i}^2+K\abs{\w_i}^2+2R(\u_i\cdot\w_i)\bigg)\ud{x}\ud{y}\no\\
&&+\frac{1}{\abs{\ln(\ep)}}\sum_{i=1}^N\int_{B_r(\v d_{i})\backslash B_{\ep}(\v d_{i})}\frac{1}{2}\bigg(C\abs{\na \uk_{\ep}}^2+K\abs{\na \wk_{\ep}}^2+2R(\na \uk_{\ep}\cdot\na \wk_{\ep})\bigg)\ud{x}\ud{y}\no\\
&&+\frac{1}{\abs{\ln(\ep)}}\sum_{i\neq j}\int_{B_r(\v d_{i})\backslash B_{\ep}(\v d_{i})}\frac{1}{2}\bigg(C(\u_i\cdot\u_j)+K(\w_i\cdot\w_j)+R(\u_i\cdot\w_j)+R(\w_i\cdot\u_j)\bigg)\ud{x}\ud{y}\no\\
&&+\frac{1}{\abs{\ln(\ep)}}\sum_{i\neq j}\int_{B_r(\v d_{i})\backslash B_{\ep}(\v d_{i})}\frac{1}{2}\bigg(C(\na \uk_{\ep}\cdot\u_j)+K(\na \wk_{\ep}\cdot\w_j)+R(\na \uk_{\ep}\cdot w_j)+R(\na \wk_{\ep}\cdot u_j)\bigg)\ud{x}\ud{y}\no\\
&&+\frac{1}{\abs{\ln(\ep)}}\sum_{i=1}^N\int_{B_r(\v d_{i})\backslash B_{\ep}(\v d_{i})}\frac{1}{2}\bigg(C(\na \uk_{\ep}\cdot\u_i)+K(\na \wk_{\ep}\cdot\w_i)+R(\na \uk_{\ep}\cdot w_i)+R(\na \wk_{\ep}\cdot u_i)\bigg)\ud{x}\ud{y}\no\\
&:=&I+II+III+IV+V+VI.\no
\end{eqnarray*}
By weak lower semi-continuity, we always have
\begin{eqnarray*}
\liminf_{\ep\rt0}I&\geq& \int_{\Omega_{r}}\frac{1}{2}\bigg(C\abs{\na v_u}^2+K\abs{\na v_w}^2+2R(\na v_u\cdot\na v_w)\bigg)\ud{x}\ud{y}\\
&\rt& \int_{\Omega}\frac{1}{2}\bigg(C\abs{\na v_u}^2+K\abs{\na v_w}^2+2R(\na v_u\cdot\na v_w)\bigg)\ud{x}\ud{y},\no
\end{eqnarray*}
as $r\rt0$. On the other hand, a direct computation based on explicit formula (\ref{mt temp 12}) and (\ref{mt temp 13}) reveals
\begin{eqnarray*}
\lim_{\ep\rt0}II=\sum_{i=1}^N\frac{C(b_u^i)^2+K(b_w^i)^2+2R(b_u^i)(b_w^i)}{4\pi},
\end{eqnarray*}
It is easy to see $III\geq0$, which means
\begin{eqnarray*}
\liminf_{\ep\rt0}III\geq0.
\end{eqnarray*}
Since $i\neq j$ in $IV$, then in the integral, at most one of $\u_i$ or $\u_j$ can contribute $\abs{\ln(\ep)}^{1/2}$. A similar argument holds for $\w_i$ and $\w_j$. Hence, we have
\begin{eqnarray*}
\liminf_{\ep\rt0}IV=0,
\end{eqnarray*}
and
\begin{eqnarray*}
\liminf_{\ep\rt0}V=0.
\end{eqnarray*}
Since
\begin{eqnarray*}
\left\{
\begin{array}{rcl}
\na\cdot\u_{i}&=&\na\cdot\w_{i}=0\ \ \text{in}\ \ B_r(\v d_{i})\backslash B_{\ep}(\v d_{i}),\\
\u_{i}\cdot n&=&\w_{i}\cdot n=0\ \ \text{on}\ \ \p B_{\ep}(\v d_{i}),
\end{array}
\right.
\end{eqnarray*}
we may integrate by parts to get
\begin{eqnarray*}
\liminf_{\ep\rt0}VI&=&0.
\end{eqnarray*}
We have shown that
\begin{eqnarray*}
\liminf_{\ep\rt0}J_{\ep}^{(0)}[\uu_{\ep},\ww_{\ep}]\geq J_{0}^{(0)}[\na v_u,\na v_w].
\end{eqnarray*}
Similarly, the compactness and Theorem \ref{compact theorem} imply that when $J_{0}^{(0)}[\na v_u,\na v_w]=\infty$, we must have $J_{\ep}^{(0)}[\uu_{\ep},\ww_{\ep}]\rt\infty$. \\
\ \\
Step 2: $\limsup$.\\
The $J_{0}^{(0)}[\na v_u,\na v_w]=\infty$ case is trivial, we only consider the case when $J_{0}^{(0)}[\na v_u,\na v_w]$ is finite. Define
\begin{eqnarray*}
(\tilde\uu_{\ep},\tilde\ww_{\ep}):=\bigg(\abs{\ln(\ep)}^{1/2}\na v_u+\sum_{i=1}^N\u_i,\ \ \abs{\ln(\ep)}^{1/2}\na v_w+\sum_{i=1}^N\w_i\bigg).
\end{eqnarray*}
We have
\begin{eqnarray*}
\left[\frac{{\bf{1}}_{\Omega_{\ep}}\tilde\uu_{\ep}}{\abs{\ln(\ep)}^{1/2}},\frac{{\bf{1}}_{\Omega_{\ep}}\tilde\ww_{\ep}}{\abs{\ln(\ep)}^{1/2}}\right]\rightharpoonup[\na v_u,\na v_w]\ \text{in}\ \text{weak}-L^2(\Omega),
\end{eqnarray*}
and
\begin{eqnarray*}
&&\frac{1}{\abs{\ln(\ep)}}\int_{\Omega_{\ep}}\frac{1}{2}\bigg(C\abs{\tilde\uu_{\ep}}^2+K\abs{\tilde\ww_{\ep}}^2+2R(\tilde\uu_{\ep}\cdot\tilde\ww_{\ep})\bigg)\ud{x}\ud{y}\\
&=&\int_{\Omega_{\ep}}\frac{1}{2}\bigg(C\abs{\na v_u}^2+K\abs{\na v_w}^2+2R(\na v_u\cdot\na v_w)\bigg)\ud{x}\ud{y}\no\\
&&+\frac{1}{\abs{\ln(\ep)}}\sum_{i=1}^N\int_{\Omega_{\ep}}\frac{1}{2}\bigg(C\abs{\u_i}^2+K\abs{\w_i}^2+2R(\u_i\cdot\w_i)\bigg)\ud{x}\ud{y}\no\\
&&+\frac{1}{\abs{\ln(\ep)}}\sum_{i\neq j}\int_{\Omega_{\ep}}\frac{1}{2}\bigg(C(\u_i\cdot\u_j)+K(\w_i\cdot\w_j)+R(\u_i\cdot\w_j)+R(\w_i\cdot\u_j)\bigg)\ud{x}\ud{y}\no\\
&&+\frac{1}{\abs{\ln(\ep)}^{1/2}}\sum_{i=1}^N\int_{\Omega_{\ep}}\frac{1}{2}\bigg(C(\na v_u\cdot\u_i)+K(\na v_w\cdot\w_i)+R(\na v_u\cdot w_i)+R(\na v_w\cdot u_i)\bigg)\ud{x}\ud{y}\no\\
&:=&I+II+III+IV.\no
\end{eqnarray*}
Estimating it term by term, and using the techniques similar to those in Step 1, we have
\begin{eqnarray*}
\limsup_{\ep\rt0}I&\leq &\int_{\Omega}\frac{1}{2}\bigg(C\abs{\na v_u}^2+K\abs{\na v_w}^2+2R(\na v_u\cdot\na v_w)\bigg)\ud{x}\ud{y},\\
\limsup_{\ep\rt0}II&\leq &\sum_{i=1}^N\frac{C(b_u^i)^2+K(b_w^i)^2+2R(b_u^i)(b_w^i)}{4\pi},\no\\
\lim_{\ep\rt0}III&= &0,\\
\lim_{\ep\rt0}IV&= &0,
\end{eqnarray*}
and conclude that
\begin{eqnarray*}
\limsup_{\ep\rt0}J_{\ep}^{(0)}[\uu_{\ep},\ww_{\ep}]\leq J_{0}^{(0)}[\uu,\ww].
\end{eqnarray*}
\end{proof}

By Theorem \ref{prelim} and the basis properties of $\Gamma$-convergence, we can naturally obtain an approximation of energy.
\begin{corollary}\label{corollary 1}
Assume that (\ref{assumption}) holds. We have
\begin{eqnarray*}
\inf_{ \uu, \ww} J_{0}^{(0)}[\uu,\ww]=\sum_{i=1}^N\frac{C(b_u^i)^2+K(b_w^i)^2+2R(b_u^i)(b_w^i)}{4\pi}.
\end{eqnarray*}
Assume $(\u_{\ep}',\w_{\ep}')$ is the minimizer of $J_{\ep}^{(0)}$, then we have
\begin{eqnarray*}
J_{\ep}^{(0)}[\u_{\ep}',\w_{\ep}']=E_0+o(1),
\end{eqnarray*}
where the rescaled leading-order energy
\begin{eqnarray}\label{et 001}
E_0=\sum_{i=1}^N\frac{C(b_u^i)^2+K(b_w^i)^2+2R(b_u^i)(b_w^i)}{4\pi}.
\end{eqnarray}
\end{corollary}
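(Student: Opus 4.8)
The plan is to read off both claims from the fundamental theorem of $\Gamma$-convergence, combined with the compactness result of Theorem \ref{compact theorem}. I would begin by computing $\inf_{\uu,\ww}J_0^{(0)}$ directly. The functional $J_0^{(0)}$ is finite only on pairs of the form $(\na v_u,\na v_w)$ with $v_u,v_w\in H^1(\Omega)$, and on such pairs
\[
J_0^{(0)}[\na v_u,\na v_w]=\int_\Omega\frac12\bigl(C\abs{\na v_u}^2+K\abs{\na v_w}^2+2R(\na v_u\cdot\na v_w)\bigr)\ud{x}\ud{y}+E_0 .
\]
The decisive point is that, by \eqref{assumption}, the integrand is a positive-definite quadratic form in $(\na v_u,\na v_w)$, hence nonnegative, so $J_0^{(0)}\ge E_0$ on its whole domain; and the admissible choice $v_u=v_w\equiv 0$ makes the integral vanish. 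Therefore $\inf_{\uu,\ww}J_0^{(0)}[\uu,\ww]=E_0$, the infimum being attained at $(0,0)$.

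For the energy estimate I would first note that $J_\ep^{(0)}$ does attain a minimum: after multiplication by $\abs{\ln(\ep)}$ and the rescaling by $\abs{\ln(\ep)}^{1/2}$, minimizing $J_\ep^{(0)}$ is exactly the problem \eqref{problem.}, which by Theorem \ref{prelim} has the unique minimizer $(\u_\ep,\w_\ep)$, so $(\u_\ep',\w_\ep')=\bigl(\u_\ep/\abs{\ln(\ep)}^{1/2},\,\w_\ep/\abs{\ln(\ep)}^{1/2}\bigr)$. For the upper bound I would apply the $\limsup$ part of the preceding $\Gamma$-convergence theorem with target $(\uu,\ww)=(0,0)=(\na v_u,\na v_w)$, $v_u=v_w\equiv0$: there is a recovery sequence $(\uu_\ep,\ww_\ep)\rightharpoonup(0,0)$ with $\limsup_{\ep\rt0}J_\ep^{(0)}[\uu_\ep,\ww_\ep]\le J_0^{(0)}[0,0]=E_0$, and since $(\u_\ep',\w_\ep')$ minimizes $J_\ep^{(0)}$ we get $J_\ep^{(0)}[\u_\ep',\w_\ep']\le J_\ep^{(0)}[\uu_\ep,\ww_\ep]$, hence $\limsup_{\ep\rt0}J_\ep^{(0)}[\u_\ep',\w_\ep']\le E_0$.

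For the matching lower bound I would argue along subsequences. Choose a subsequence realizing $\liminf_{\ep\rt0}J_\ep^{(0)}[\u_\ep',\w_\ep']$; by the previous step it is bounded, so Theorem \ref{compact theorem} yields a further subsequence along which $(\i_{\Omega_{\ep}}\u_\ep',\i_{\Omega_{\ep}}\w_\ep')\rightharpoonup(\na v_u,\na v_w)$ for some $v_u,v_w\in H^1(\Omega)$, and then the $\liminf$ part of the $\Gamma$-convergence theorem gives $\liminf_{\ep\rt0}J_\ep^{(0)}[\u_\ep',\w_\ep']\ge J_0^{(0)}[\na v_u,\na v_w]\ge E_0$. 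Combining with the upper bound shows $J_\ep^{(0)}[\u_\ep',\w_\ep']\rt E_0$, that is, $J_\ep^{(0)}[\u_\ep',\w_\ep']=E_0+o(1)$. I do not expect a genuine obstacle here: this is the standard convergence-of-minima consequence of $\Gamma$-convergence together with equi-coercivity, and the only points requiring care are the bookkeeping of the $\abs{\ln(\ep)}^{1/2}$ rescaling that links $J_\ep^{(0)}$ to the fixed-$\ep$ problem of Section 2, and the routine subsequence extraction in the lower bound.
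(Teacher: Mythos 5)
Your proposal is correct and follows exactly the route the paper intends: the paper offers no written proof of this corollary beyond the remark that it follows from Theorem \ref{prelim} and the ``basic properties of $\Gamma$-convergence,'' and your argument is precisely that standard convergence-of-minima machinery (direct computation of $\inf J_0^{(0)}$ via positive definiteness of the quadratic form, recovery sequence at $(0,0)$ for the upper bound, compactness plus the $\liminf$ inequality along a subsequence for the lower bound) spelled out in full. Nothing is missing, and your bookkeeping of the $\abs{\ln(\ep)}^{1/2}$ rescaling linking $J_{\ep}^{(0)}$ to problem (\ref{problem.}) is the one point the paper leaves implicit that genuinely needed to be said.
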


\subsection{First-Order $\Gamma$-Convergence}

Since the leading order energy $E_0$ only concerns with magnitude of the Burger's vectors and loses information about the dislocation position, we need more detailed analysis of convergence and selection process, which leads us to considering the first-order $\Gamma$-convergence.

Now we get rid of the rescaling $\dfrac{1}{\abs{\ln(\ep)}^{1/2}}$. Define the functional $J_{\ep}^{(1)}: L^2(\Omega)\times L^2(\Omega)\rt[0,\infty]$ as
\begin{eqnarray*}
J_{\ep}^{(1)}\left[\tilde\uu_{\ep},\tilde\ww_{\ep}\right]:=\left\{
\begin{array}{ll}
\ds\int_{\Omega_{\ep}}\frac{1}{2}\bigg(C\abs{\tilde\uu_{\ep}}^2+K\abs{\tilde\ww_{\ep}}^2+2R(\tilde\uu_{\ep}\cdot\tilde\ww_{\ep})\bigg)\ud{x}\ud{y}-\abs{\ln(\ep)}\inf_{ \uu, \ww} J_0^{(0)}[ \uu, \ww]\\
\qquad\qquad\qquad\qquad\qquad\qquad\qquad\qquad\qquad\qquad\qquad\ \ \text{if}\ \ (\tilde\uu_{\ep},\tilde\ww_{\ep})\in H_0^{\ep},\\
\infty\ \ \text{otherwise in}\ \ L^2(\Omega)\times L^2(\Omega).
\end{array}
\right.
\end{eqnarray*}

\begin{theorem}( $1^{st}$-Order $\Gamma$-Convergence )
Assume that (\ref{assumption}) holds. Define the functional $J_0^{(1)}: L^2(\Omega)\times L^2(\Omega)\rt[0,\infty]$ as
\begin{eqnarray*}
J_0^{(1)}[ \tilde\uu, \tilde\ww]:=\left\{
\begin{array}{ll}
E_{\textnormal{self}}+E_{\textnormal{int}}+E_{\textnormal{elastic}}\ \ \text{if}\ \ ( \tilde\uu, \tilde\ww)=\ds\left(\na v_u+\sum_{i=1}^N\u_i, \na v_w+\sum_{i=1}^N\w_i\right),\\
\qquad\qquad\qquad\qquad\qquad\qquad\qquad\qquad\text{for}\ \ \text{some}\ \ (v_u,v_w)\in H^1(\Omega),\\
\infty\qquad\ \ \text{otherwise in}\ \ L^2(\Omega)\times L^2(\Omega),
\end{array}
\right.
\end{eqnarray*}
where
\begin{eqnarray*}
E_{\textnormal{self}}&:=&\sum_{i=1}^N\int_{\Omega\backslash B_r(\v d_i)}\frac{1}{2}\bigg(C\abs{\u_{i}}^2+K\abs{\w_{i}}^2+2R(\u_{i}\cdot\w_{i})\bigg)\ud{x}\ud{y}\\
&&+\sum_{i=1}^N\frac{(C(b_u^i)^2+K(b_w^i)^2+2R(b_u^i)(b_w^i))}{4\pi}\ln(r),\no\\
E_{\textnormal{int}}&:=&\sum_{i=1}^{N-1}\sum_{j=i}^N\int_{\Omega}\bigg(C(\u_i\cdot\u_j)+K(\w_i\cdot\w_j)+R(\u_i\cdot\w_j)+R(\u_j\cdot\w_i)\bigg),\\
E_{\textnormal{elastic}}&:=&J[\na v_u,\na v_w]+\sum_{i=1}^N\int_{\p\Omega}\bigg(v_u(C\u_i+R\w_i)+v_w(K\w_i+R\u_i)\bigg)\cdot n\ud{s}.
\end{eqnarray*}
Then
\begin{enumerate}
\item
For any sequence of pairs $(\tilde\uu_{\ep},\tilde\ww_{\ep})\in L^2(\Omega)\times L^2(\Omega)$ such that $\left(\tilde\uu_{\ep},\tilde\ww_{\ep}\right)\rightharpoonup(\tilde\uu,\tilde\ww)$ in weak-$L^2(\Omega)$, we have $\liminf_{\ep\rt0}J_{\ep}^{(1)}[\tilde\uu_{\ep},\tilde\ww_{\ep}]\geq J_{0}^{(1)}[\na v_u,\na v_w]$.
\item
There exists a sequence of pairs $(\tilde\uu_{\ep},\tilde\ww_{\ep})\in L^2(\Omega)\times L^2(\Omega)$ such that $\left(\tilde\uu_{\ep},\tilde\ww_{\ep}\right)\rightharpoonup(\tilde\uu,\tilde\ww)$ in weak-$L^2(\Omega)$, we have $\limsup_{\ep\rt0}J_{\ep}^{(1)}[\tilde\uu_{\ep},\tilde\ww_{\ep}]\leq J_{0}^{(1)}[\na v_u,\na v_w]$,
\end{enumerate}
which means
\begin{eqnarray*}
J_{\ep}^{(1)}[\tilde\uu_{\ep},\tilde\ww_{\ep}]\rt J_0^{(1)}[\tilde\uu,\tilde\ww],
\end{eqnarray*}
in the sense of $\Gamma$-convergence in weak-$L^2(\Omega)$
\end{theorem}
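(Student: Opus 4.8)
The plan is to follow the classical two-sided scheme for first-order $\Gamma$-convergence, treating the liminf and limsup inequalities separately, and reusing the decomposition of admissible fields already established in Lemma~\ref{lemma 1} and in the proof of Theorem~\ref{compact theorem}.

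\textbf{Step 1 (Compactness and the canonical decomposition).} First I would observe that if $\liminf_{\ep\rt0}J_{\ep}^{(1)}[\tilde\uu_{\ep},\tilde\ww_{\ep}]<\infty$, then along a subsequence the (unrescaled) energies $\int_{\Omega_{\ep}}\f[\tilde\uu_{\ep},\tilde\ww_{\ep}]$ are bounded by $\abs{\ln(\ep)}E_0+O(1)$, so the scaled fields satisfy the hypothesis of Theorem~\ref{compact theorem}. Writing $\tilde\uu_{\ep}=\sum_{i=1}^N\u_i+\na\uk_{\ep}$ and $\tilde\ww_{\ep}=\sum_{i=1}^N\w_i+\na\wk_{\ep}$ as in that proof, the point now is that the leading $\abs{\ln(\ep)}E_0$ has been subtracted off, so the remainder energy of $(\na\uk_{\ep},\na\wk_{\ep})$ is actually $O(1)$, not $O(\abs{\ln(\ep)})$. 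Hence $\{\uk_{\ep}\}$, $\{\wk_{\ep}\}$ are bounded in $H^1$ (after the extension of \cite{Cermelli.Leoni2005} and the normalization), so up to a further subsequence $\uk_{\ep}\rightharpoonup v_u$, $\wk_{\ep}\rightharpoonup v_w$ weakly in $H^1(\Omega)$ for some $(v_u,v_w)\in H^1(\Omega)$, and therefore $(\tilde\uu_{\ep},\tilde\ww_{\ep})\rightharpoonup(\na v_u+\sum\u_i,\na v_w+\sum\w_i)$ in weak-$L^2$; this identifies the limit with the form appearing in the domain of $J_0^{(1)}$.

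\textbf{Step 2 (Liminf inequality).} I would split $\Omega_{\ep}=\Omega_r\cup\bigcup_i(B_r(\v d_i)\setminus B_\ep(\v d_i))$ for a small fixed $r$, exactly as in the zeroth-order proof, and expand $\f[\tilde\uu_{\ep},\tilde\ww_{\ep}]$ into the pure-singular part $\sum_i\f[\u_i,\w_i]$, the pure-elastic part $\f[\na\uk_{\ep},\na\wk_{\ep}]$, and the mixed cross terms. The singular self-energy on $B_r(\v d_i)\setminus B_\ep(\v d_i)$ is computed explicitly and equals $\frac{C(b_u^i)^2+K(b_w^i)^2+2R b_u^i b_w^i}{4\pi}\ln(r/\ep)$; subtracting $\abs{\ln(\ep)}E_0$ leaves precisely the $\ln(r)$ boundary term in $E_{\textnormal{self}}$, plus the integral of $\f[\u_i,\w_i]$ over $\Omega\setminus B_r(\v d_i)$ in the limit. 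For the elastic part, weak-$L^2$ lower semicontinuity of the (positive-definite) quadratic form gives $\liminf\int_{\Omega_r}\f[\na\uk_{\ep},\na\wk_{\ep}]\geq\int_{\Omega_r}\f[\na v_u,\na v_w]$, and letting $r\to0$ recovers $J[\na v_u,\na v_w]$. The cross terms between $\u_i$ and $\u_j$ with $i\neq j$ converge to $E_{\textnormal{int}}$; the cross terms between $\na\uk_{\ep}$ and the singular fields $\u_i$ are handled by integration by parts on the annulus $\Omega\setminus\bigcup B_\ep(\v d_i)$, using $\na\cdot\u_i=0$ and $\u_i\cdot n=0$ on $\p B_\ep(\v d_i)$, which converts them to the $\p\Omega$ boundary integrals of $E_{\textnormal{elastic}}$ plus error terms that vanish because the boundary integral over $\p B_\ep(\v d_i)$ of a bounded trace times an $O(1/\ep)$ field times $2\pi\ep$ stays controlled — here one uses that $\uk_{\ep}\rightharpoonup v_u$ also in a suitable trace sense on $\p\Omega$ and that the small-circle contributions tend to $0$.

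\textbf{Step 3 (Limsup / recovery sequence).} Given a target $(v_u,v_w)\in H^1(\Omega)$, I would take the explicit recovery sequence $\tilde\uu_{\ep}:=\na v_u+\sum_i\u_i$, $\tilde\ww_{\ep}:=\na v_w+\sum_i\w_i$ (independent of $\ep$ except for the domain $\Omega_{\ep}$), which lies in $H_0^{\ep}$ and converges weakly in $L^2$ to the prescribed limit. Then one simply computes $\int_{\Omega_{\ep}}\f[\tilde\uu_{\ep},\tilde\ww_{\ep}]-\abs{\ln(\ep)}E_0$ by the same expansion: the self-energy over $\Omega_{\ep}$ minus $\abs{\ln(\ep)}E_0$ tends to $E_{\textnormal{self}}$, the interaction terms tend to $E_{\textnormal{int}}$, the elastic term tends to $J[\na v_u,\na v_w]$, and the mixed $\na v_u\cdot\u_i$ terms give, after integration by parts, exactly the $\p\Omega$ contribution of $E_{\textnormal{elastic}}$ (the $\p B_\ep(\v d_i)$ contribution vanishes in the limit). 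This yields $\limsup_{\ep\rt0}J_{\ep}^{(1)}\leq J_0^{(1)}$, matching the liminf bound.

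\textbf{Main obstacle.} The delicate point is Step 2, specifically the treatment of the mixed terms $\int_{\Omega_r\setminus B_\ep(\v d_i)}\na\uk_{\ep}\cdot\u_i$: one must integrate by parts on the perforated domain and show that the contribution on $\p B_\ep(\v d_i)$ genuinely vanishes as $\ep\to0$, despite $\u_i\sim 1/\ep$ there and $\uk_{\ep}$ potentially growing — this is where the fact that $\u_i\cdot n=0$ on $\p B_\ep(\v d_i)$ is essential, and where one needs a uniform (in $\ep$) $H^1$, hence trace, bound on $\uk_{\ep}$ (available precisely because $\abs{\ln(\ep)}E_0$ has been subtracted, so the remainder energy is $O(1)$). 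The matching of the subtracted divergent constant $\abs{\ln(\ep)}E_0$ with the logarithmic self-energy $\frac{(\cdots)}{4\pi}\ln(r/\ep)$ so as to leave the clean $\ln(r)$ term in $E_{\textnormal{self}}$ is the other bookkeeping step that must be done carefully but presents no conceptual difficulty.
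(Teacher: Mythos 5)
Your proposal is correct and follows essentially the same route as the paper: the same decomposition of an admissible field into $\sum_i\u_i+\na\uk_{\ep}$, the same term-by-term expansion of the energy (explicit self-energy cancelling $\abs{\ln(\ep)}E_0$ up to the $\ln(r)$ term, weak lower semicontinuity for the elastic part, integration by parts with $\u_i\cdot n=0$ on $\p B_{\ep}(\v d_i)$ for the cross terms), and the same $\ep$-independent recovery sequence $\left(\na v_u+\sum_i\u_i,\na v_w+\sum_i\w_i\right)$ for the limsup. The only cosmetic difference is that you obtain the uniform $H^1$ bound on $(\uk_{\ep},\wk_{\ep})$ from the subtracted energy bound, whereas the paper reads it off directly from the assumed weak-$L^2$ convergence of $(\na\uk_{\ep},\na\wk_{\ep})$; both are adequate.
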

\begin{proof}
We naturally have
\begin{eqnarray*}
J_{\ep}^{(1)}\left[ \tilde\uu_{\ep}, \tilde\ww_{\ep}\right]&=&\left\{
\begin{array}{ll}
\ds\int_{\Omega_{\ep}}\frac{1}{2}\bigg(C\abs{ \tilde\uu_{\ep}}^2+K\abs{ \tilde\ww_{\ep}}^2+2R( \tilde\uu_{\ep}\cdot \tilde\ww_{\ep})\bigg)\ud{x}\ud{y}-\ds\abs{\ln(\ep)}\sum_{i=1}^N\dfrac{C(b_u^i)^2+K(b_w^i)^2+2R(b_u^i)(b_w^i)}{4\pi}\\
\qquad\qquad\qquad\qquad\qquad\qquad\qquad\qquad\qquad\qquad\qquad\qquad\qquad\qquad
\ \ \text{if}\ \ ( \tilde\uu_{\ep}, \tilde\ww_{\ep})\in H_0^{\ep},\\
\infty\qquad\ \ \text{otherwise in}\ \ L^2(\Omega)\times L^2(\Omega).
\end{array}
\right.
\end{eqnarray*}
We first prove the $\liminf$ part. Consider weakly convergent sequence
\begin{eqnarray*}
( \tilde\uu_{\ep}, \tilde\ww_{\ep})={\bf{1}}_{\Omega_{\ep}}\bigg(\na \uk_{\ep}+\sum_{i=1}^N\u_i,\ \ \na \wk_{\ep}+\sum_{i=1}^N\w_i\bigg)\rightharpoonup (\tilde\uu,\tilde\ww)=\ds\left(\na v_u+\sum_{i=1}^N\u_i, \na v_w+\sum_{i=1}^N\w_i\right),
\end{eqnarray*}
Direct computation using (\ref{mt temp 12}) and (\ref{mt temp 13}) yields
\begin{eqnarray*}
{\bf{1}}_{\Omega_{\ep}}\bigg(\sum_{i=1}^N\u_i,\ \ \sum_{i=1}^N\w_i\bigg)\rightharpoonup\bigg(\sum_{i=1}^N\u_i,\ \ \sum_{i=1}^N\w_i\bigg).
\end{eqnarray*}
Naturally, we have
\begin{eqnarray*}
(\na \uk_{\ep},\na \wk_{\ep})\rightharpoonup(\na v_u,\na v_w).
\end{eqnarray*}
Hence, weak convergence yields boundedness $\nm{\na\uk_{\ep}}_{L^2(\Omega)}+\nm{\na\wk_{\ep}}_{L^2(\Omega)}\leq C'$ for some constant $C'$ independent of $\ep$.
We may decompose
\begin{eqnarray*}
&&\int_{\Omega_{\ep}}\frac{1}{2}\bigg(C\abs{ \tilde\uu_{\ep}}^2+K\abs{ \tilde\ww_{\ep}}^2+2R( \tilde\uu_{\ep}\cdot \tilde\ww_{\ep})\bigg)\ud{x}\ud{y}
-\abs{\ln(\ep)}\sum_{i=1}^N\dfrac{C(b_u^i)^2+K(b_w^i)^2+2R(b_u^i)(b_w^i)}{4\pi}\\
&=&\bigg(\sum_{i=1}^N\int_{\Omega_{\ep}}\frac{1}{2}\bigg(C\abs{\u_i}^2+K\abs{\w_i}^2+2R(\u_i\cdot\w_i)\bigg)\ud{x}\ud{y}
-\abs{\ln(\ep)}\sum_{i=1}^N\dfrac{C(b_u^i)^2+K(b_w^i)^2+2R(b_u^i)(b_w^i)}{4\pi}\bigg)\no\\
&&+\sum_{i\neq j}\int_{\Omega_{\ep}}\frac{1}{2}\bigg(C(\u_i\cdot\u_j)+K(\w_i\cdot\w_j)+R(\u_i\cdot\w_j)+R(\w_i\cdot\u_j)\bigg)\ud{x}\ud{y}\no\\
&&+\int_{\Omega_{\ep}}\frac{1}{2}\bigg(C\abs{\na \uk_{\ep}}^2+K\abs{\na \wk_{\ep}}^2+2R(\na \uk_{\ep}\cdot\na \wk_{\ep})\bigg)\ud{x}\ud{y}\no\\
&&+\sum_{i=1}^N\int_{\Omega_{\ep}}\frac{1}{2}\bigg(C(\na \uk_{\ep}\cdot\u_i)+K(\na \wk_{\ep}\cdot\w_i)+R(\na \uk_{\ep}\cdot w_i)+R(\na \wk_{\ep}\cdot u_i)\bigg)\ud{x}\ud{y}\no\\
&:=&I+II+III+IV.\no
\end{eqnarray*}
Here the argument is similar to that in the proof of $0^{th}$-order $\Gamma$-convergence, so we only describe the main strategy. For $I$, decompose $\Omega_{\ep}=\Omega_{r}\cup \left(\Omega_{\ep}\backslash\Omega_{r}\right)$ for some $r>\ep$, i.e.
\begin{eqnarray*}
I&=&\sum_{i=1}^N\int_{\Omega_{r}}\frac{1}{2}\bigg(C\abs{\u_i}^2+K\abs{\w_i}^2+2R(\u_i\cdot\w_i)\bigg)\ud{x}\ud{y}
+\sum_{i=1}^N\int_{\Omega_{\ep}\backslash\Omega_{r}}\frac{1}{2}\bigg(C\abs{\u_i}^2+K\abs{\w_i}^2+2R(\u_i\cdot\w_i)\bigg)\ud{x}\ud{y}\\
&&-\abs{\ln(\ep)}\sum_{i=1}^N\dfrac{C(b_u^i)^2+K(b_w^i)^2+2R(b_u^i)(b_w^i)}{4\pi}\no
\end{eqnarray*}
Direct computation using (\ref{mt temp 12}) and (\ref{mt temp 13}) reveals that
\begin{eqnarray*}
&&\lim_{\ep\rt0}\bigg(\sum_{i=1}^N\int_{\Omega_{\ep}\backslash\Omega_{r}}\frac{1}{2}\bigg(C\abs{\u_i}^2+K\abs{\w_i}^2+2R(\u_i\cdot\w_i)\bigg)\ud{x}\ud{y}\bigg)\\
&=&\abs{\ln(\ep)}\sum_{i=1}^N\dfrac{C(b_u^i)^2+K(b_w^i)^2+2R(b_u^i)(b_w^i)}{4\pi}+\sum_{i=1}^N\frac{(C(b_u^i)^2+K(b_w^i)^2+2R(b_u^i)(b_w^i))}{4\pi}\ln(r).\no
\end{eqnarray*}
Hence, we know
\begin{eqnarray*}
\lim_{\ep\rt0}I=E_{\textnormal{self}}.
\end{eqnarray*}
Similarly, a direct computation using (\ref{mt temp 12}) and (\ref{mt temp 13}) shows that
\begin{eqnarray*}
\lim_{\ep\rt0}II=E_{\textnormal{int}}.
\end{eqnarray*}
Based on weak convergence $(\na \uk_{\ep},\na \wk_{\ep})\rightharpoonup(\na v_u,\na v_w)$ and weak lower semi-continuity, we know that
\begin{eqnarray*}
\liminf_{\ep\rt0}III\geq J[\na v_u,\na v_w].
\end{eqnarray*}
Finally, after integrating by parts, by weak convergence and the equations (\ref{temp 1}) satisfied by $(\u_i,\w_i)$, we know that
\begin{eqnarray*}
\lim_{\ep\rt0}IV=\displaystyle\sum_{i=1}^N\int_{\p\Omega}\bigg(v_u(C\u_i+R\w_i)+v_w(K\w_i+R\u_i)\bigg)\cdot n\ud{s}.
\end{eqnarray*}
Therefore,
\begin{eqnarray*}
\liminf_{\ep\rt0}(III+IV)\geq E_{\textnormal{elastic}}.
\end{eqnarray*}
To summarize, this concludes the proof of the $\liminf$ part.\\
\ \\
For the $\limsup$ part, consider the sequence
\begin{eqnarray*}
( \tilde\uu_{\ep}, \tilde\ww_{\ep})={\bf{1}}_{\Omega_{\ep}}\bigg(\na v_u+\sum_{i=1}^N\u_i,\ \ \na v_w+\sum_{i=1}^N\w_i\bigg),
\end{eqnarray*}
and we have
\begin{eqnarray*}
( \tilde\uu_{\ep}, \tilde\ww_{\ep})\rightharpoonup\bigg(\na v_u+\sum_{i=1}^N\u_i,\ \ \na v_w+\sum_{i=1}^N\w_i\bigg).
\end{eqnarray*}
Therefore, a direct computation using explicit formula (\ref{mt temp 12}) and (\ref{mt temp 13}) justifies the result, and thus the $\Gamma$-convergence holds.
\end{proof}

Similar to the analysis of Corollary \ref{corollary 1}, Theorem \ref{prelim} and the basic property of $\Gamma$-convergence justify a more detailed energy approximation.
\begin{corollary}\label{corollary 2}
Assume that (\ref{assumption}) holds. We have
\begin{eqnarray*}
\inf_{ \tilde\uu, \tilde\ww} J_{0}^{(1)}[\tilde\uu,\tilde\ww]=F_{\textnormal{self}}+F_{\textnormal{int}}+F_{\textnormal{elastic}},
\end{eqnarray*}
where
\begin{eqnarray}\label{et 002}
F_{\textnormal{self}}:&=&\sum_{i=1}^N\int_{\Omega\backslash B_r(\v d_i)}\frac{1}{2}\bigg(C\abs{\u_{i}}^2+K\abs{\w_{i}}^2+2R(\u_{i}\cdot\w_{i})\bigg)\ud{x}\ud{y}\\
&&+\sum_{i=1}^N\frac{(C(b_u^i)^2+K(b_w^i)^2+2R(b_u^i)(b_w^i))}{4\pi}\ln(r),\no\\
F_{\textnormal{int}}:&=&\sum_{i=1}^{N-1}\sum_{j=i}^N\int_{\Omega}\bigg(C(\u_i\cdot\u_j)+K(\w_i\cdot\w_j)+R(\u_i\cdot\w_j)+R(\u_j\cdot\w_i)\bigg),\no\\
F_{\textnormal{elastic}}:&=&J[\na u_{0},\na w_{0}]+\sum_{i=1}^N\int_{\p\Omega}\bigg(u_{0}(C\u_i+R\w_i)+w_{0}(K\w_i+R\u_i)\bigg)\cdot n\ud{s},\no
\end{eqnarray}
in which $(u_0,w_0)$ is the minimizer of
\begin{eqnarray*}
I[v_u,v_w]=J[\na v_u,\na v_w]+\sum_{i=1}^N\int_{\p\Omega}\bigg(v_u(C\u_i+R\w_i)+v_w(K\w_i+R\u_i)\bigg)\cdot n\ud{s}.
\end{eqnarray*}
Assume $(\tilde\u_{\ep}',\tilde\w_{\ep}')\in H_0^{\ep}$ is the minimizer of $J_{\ep}^{(1)}$, then we have
\begin{eqnarray*}
J_{\ep}^{(1)}[\tilde\u_{\ep}',\tilde\w_{\ep}']=F_{\textnormal{self}}+F_{\textnormal{int}}+F_{\textnormal{elastic}}+o(1).
\end{eqnarray*}
\end{corollary}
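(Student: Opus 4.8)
The plan is to derive the corollary from the first-order $\Gamma$-convergence theorem together with Theorem~\ref{prelim}, in exact parallel with how Corollary~\ref{corollary 1} followed from the zeroth-order one: $\Gamma$-convergence plus equi-coercivity forces the minimum values, and along subsequences the minimizers, to converge. First I would compute $\inf_{\tilde\uu,\tilde\ww}J_0^{(1)}$. Since $J_0^{(1)}[\tilde\uu,\tilde\ww]$ is finite only for pairs $(\tilde\uu,\tilde\ww)=\big(\na v_u+\sum_{i=1}^N\u_i,\ \na v_w+\sum_{i=1}^N\w_i\big)$ with $(v_u,v_w)\in H^1(\Omega)\times H^1(\Omega)$, on which it equals $E_{\textnormal{self}}+E_{\textnormal{int}}+E_{\textnormal{elastic}}$, the infimum reduces to minimizing over $(v_u,v_w)$. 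The quantities $E_{\textnormal{self}}$ and $E_{\textnormal{int}}$ depend only on the fixed singular fields $\u_i,\w_i$ (which carry their singularities at the distinct points $\v d_i$), so they are constants in this minimization, whereas $E_{\textnormal{elastic}}[v_u,v_w]=J[\na v_u,\na v_w]+\sum_{i=1}^N\int_{\p\Omega}\big(v_u(C\u_i+R\w_i)+v_w(K\w_i+R\u_i)\big)\cdot n\ud{s}$ is exactly the functional $I[v_u,v_w]$ of the statement. Because each $\u_i$, $\w_i$ is divergence-free away from $\v d_i$ and purely tangential on circles centered at $\v d_i$, one has $\int_{\p\Omega}(C\u_i+R\w_i)\cdot n\ud{s}=0$, so $I$ is insensitive to adding constants to $v_u,v_w$; imposing the normalization $\int_B v_u=\int_B v_w=0$, $I$ is strictly convex and, by positive-definiteness of $\f$, Poincar\'{e}'s inequality, and the trace bound on the linear boundary term (exactly as in Lemma~\ref{mini lemma}), coercive on $H^1(\Omega)\times H^1(\Omega)$, hence it has a unique minimizer $(u_0,w_0)$. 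Setting $F_{\textnormal{self}}:=E_{\textnormal{self}}$, $F_{\textnormal{int}}:=E_{\textnormal{int}}$, and $F_{\textnormal{elastic}}:=I[u_0,w_0]=E_{\textnormal{elastic}}[u_0,w_0]$ then gives $\inf_{\tilde\uu,\tilde\ww}J_0^{(1)}=F_{\textnormal{self}}+F_{\textnormal{int}}+F_{\textnormal{elastic}}$.

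For the sharp expansion, Theorem~\ref{prelim} provides a unique minimizer $(\u_\ep,\w_\ep)\in H_0^\ep$ of (\ref{problem.}); since $J_\ep^{(1)}$ equals $J_\ep-\abs{\ln(\ep)}E_0$ on $H_0^\ep$ (by Corollary~\ref{corollary 1}, $\inf J_0^{(0)}=E_0$) and $+\infty$ elsewhere, the minimizer $(\tilde\u_\ep',\tilde\w_\ep')$ of $J_\ep^{(1)}$ equals $(\u_\ep,\w_\ep)$, and $m_\ep:=J_\ep^{(1)}[\tilde\u_\ep',\tilde\w_\ep']=\min_{L^2\times L^2}J_\ep^{(1)}$. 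For the upper bound, feeding the recovery sequence from the $\limsup$ part of the first-order $\Gamma$-convergence theorem, with target $\big(\na u_0+\sum_i\u_i,\ \na w_0+\sum_i\w_i\big)$, into the minimality of $m_\ep$ gives $\limsup_{\ep\rt0}m_\ep\leq J_0^{(1)}\big[\na u_0+\sum_i\u_i,\ \na w_0+\sum_i\w_i\big]=F_{\textnormal{self}}+F_{\textnormal{int}}+F_{\textnormal{elastic}}$. For the matching lower bound, this shows $\sup_\ep m_\ep<\infty$; writing $\tilde\u_\ep'=\na\uk_\ep+\sum_i\u_i$, $\tilde\w_\ep'=\na\wk_\ep+\sum_i\w_i$ with $\uk_\ep,\wk_\ep\in H^1(\Omega_\ep)$ (possible by the analysis of Lemma~\ref{lemma 1}), the exact cancellation of the $\abs{\ln(\ep)}$-divergent self-energy against $\abs{\ln(\ep)}E_0$ — the computation of the leading term in the $\liminf$ part of the first-order $\Gamma$-convergence proof — together with a coercivity estimate as in Lemma~\ref{mini lemma}, forces $\int_{\Omega_\ep}\big(C\abs{\na\uk_\ep}^2+K\abs{\na\wk_\ep}^2+2R\,\na\uk_\ep\cdot\na\wk_\ep\big)\ud{x}\ud{y}$ to be bounded uniformly in $\ep$; then, after the natural extension and Poincar\'{e} normalization as in Theorem~\ref{compact theorem}, along a subsequence $\na\uk_\ep\rightharpoonup\na v_u$ and $\na\wk_\ep\rightharpoonup\na v_w$ in weak-$L^2(\Omega)$ for some $(v_u,v_w)\in H^1(\Omega)\times H^1(\Omega)$. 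Applying the $\liminf$ part of the theorem along that subsequence yields $\liminf_{\ep\rt0}m_\ep\geq J_0^{(1)}\big[\na v_u+\sum_i\u_i,\ \na v_w+\sum_i\w_i\big]\geq\inf_{\tilde\uu,\tilde\ww}J_0^{(1)}=F_{\textnormal{self}}+F_{\textnormal{int}}+F_{\textnormal{elastic}}$. Combining the two bounds, $m_\ep\rt F_{\textnormal{self}}+F_{\textnormal{int}}+F_{\textnormal{elastic}}$, and since this limit does not depend on the extracted subsequence the whole family converges, i.e. $J_\ep^{(1)}[\tilde\u_\ep',\tilde\w_\ep']=F_{\textnormal{self}}+F_{\textnormal{int}}+F_{\textnormal{elastic}}+o(1)$.

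The algebraic reductions above, the existence and uniqueness of $(u_0,w_0)$, and the substitution into the already-established $\Gamma$-bounds are routine. The step carrying the real content is the equi-coercivity used in the lower bound, namely that any sequence with $\sup_\ep m_\ep<\infty$ has remainders $\na\uk_\ep,\na\wk_\ep$ bounded in $L^2(\Omega_\ep)$ uniformly in $\ep$; this requires the cross terms between $\uk_\ep$ and the $\u_i$, and between distinct $\u_i,\u_j$, to be $O(1)$ rather than $O(\abs{\ln(\ep)})$ — precisely the term-by-term decomposition already carried out in the $\liminf$ part of that theorem's proof, together with a coercivity estimate of the type in Lemma~\ref{mini lemma} — so it may be quoted rather than redone.
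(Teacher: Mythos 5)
Your proposal is correct and follows essentially the same route as the paper, which proves this corollary by invoking Theorem~\ref{prelim} together with the standard property of $\Gamma$-convergence that, under equi-coercivity, minimum values (and minimizers, up to subsequences) converge; you have simply spelled out the details the paper leaves implicit. The one simplification available to you: the uniform-in-$\ep$ bound $\nm{u_{\ep}}_{H^1(\Omega_{\ep})}^2+\nm{w_{\ep}}_{H^1(\Omega_{\ep})}^2\leq M$ on the remainders, which is the equi-coercivity you flag as the step with real content, is already established in Lemma~\ref{mini lemma}, so it can be quoted directly rather than re-extracted from the $\liminf$ decomposition.
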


\begin{remark}
The existence and uniqueness of minimizer $(u_0,w_0)$ can be proved using a similar argument as in Section 2.3 and 2.4.
\end{remark}

\begin{remark}
We can show that $F_{\textnormal{self}}$ is independent of the choice of $r$. Assume $r'<\bar r$, say $r'< r$, then we have
\begin{eqnarray*}
&&\sum_{i=1}^N\int_{\Omega\backslash B_{r'}(\v d_i)}\frac{1}{2}\bigg(C\abs{\u_{i}}^2+K\abs{\w_{i}}^2+2R(\u_{i}\cdot\w_{i})\bigg)\ud{x}\ud{y}\\
&&+\sum_{i=1}^N(C(b_u^i)^2+K(b_w^i)^2+2R(b_u^i)(b_w^i))\frac{1}{4\pi}\ln(r')\no\\
&=&\sum_{i=1}^N\int_{\Omega\backslash B_r(\v d_i)}\frac{1}{2}\bigg(C\abs{\u_{i}}^2+K\abs{\w_{i}}^2+2R(\u_{i}\cdot\w_{i})\bigg)\ud{x}\ud{y}\no\\
&&+\sum_{i=1}^N\int_{B_r(\v d_i)\backslash B_{r'}(\v d_i)}\frac{1}{2}\bigg(C\abs{\u_{i}}^2+K\abs{\w_{i}}^2+2R(\u_{i}\cdot\w_{i})\bigg)\ud{x}\ud{y}\no\\
&&+\sum_{i=1}^N(C(b_u^i)^2+K(b_w^i)^2+2R(b_u^i)(b_w^i))\frac{1}{4\pi}\ln(r')\no\\
&=&\sum_{i=1}^N\int_{\Omega\backslash B_r(\v d_i)}\frac{1}{2}\bigg(C\abs{\u_{i}}^2+K\abs{\w_{i}}^2+2R(\u_{i}\cdot\w_{i})\bigg)\ud{x}\ud{y}\no\\
&&+\sum_{i=1}^N(C(b_u^i)^2+K(b_w^i)^2+2R(b_u^i)(b_w^i))\frac{1}{4\pi}\ln\left(\frac{r}{r'}\right)+\sum_{i=1}^N(C(b_u^i)^2+K(b_w^i)^2+2R(b_u^i)(b_w^i))\frac{1}{4\pi}\ln(r')\no\\
&=&\sum_{i=1}^N\int_{\Omega\backslash B_r(\v d_i)}\frac{1}{2}\bigg(C\abs{\u_{i}}^2+K\abs{\w_{i}}^2+2R(\u_{i}\cdot\w_{i})\bigg)\ud{x}\ud{y}\no\\
&&+\sum_{i=1}^N(C(b_u^i)^2+K(b_w^i)^2+2R(b_u^i)(b_w^i))\frac{1}{4\pi}\ln(r).\no
\end{eqnarray*}
Hence, choosing $r'$ or $r$ gives exactly the same $F_{\textnormal{self}}$.
\end{remark}

\subsection{Minimizer and Energy Structure}

Combining Corollary \ref{corollary 1} and Corollary \ref{corollary 2}, we can describe the structure of minimizer and energy.

\begin{theorem}
Assume that (\ref{assumption}) holds. The problem (\ref{problem.}) admits a unique solution
\begin{eqnarray*}
\u_{\ep}=\sum_{i=1}^N\u_i+\na u_{\ep},\qquad \w_{\ep}=\sum_{i=1}^N\w_i+\na w_{\ep},
\end{eqnarray*}
where
\begin{eqnarray*}
\u_i&=&\frac{b_u^i}{2\pi}\frac{1}{(x-x_i)^2+(y-y_i)^2}\bigg(-(y-y_i),(x-x_i)\bigg),\\
\w_i&=&\frac{b_w^i}{2\pi}\frac{1}{(x-x_i)^2+(y-y_i)^2}\bigg(-(y-y_i),(x-x_i)\bigg),
\end{eqnarray*}
and $(u_{\ep},w_{\ep})$ is the unique minimizer of
\begin{eqnarray*}
I_{\ep}[u_{\ep},w_{\ep}]:&=&J_{\ep}[\na u_{\ep},\na w_{\ep}]+\sum_{i=1}^N\int_{\p\Omega}\bigg(u_{\ep}(C\u_i+R\w_i)+w_{\ep}(K\w_i+R\u_i)\bigg)\cdot n\ud{s}\no\\
&&-\sum_{i=1}^N\sum_{j\neq i}\int_{\p B_{\ep}(x_i,y_i)}\bigg(u_{\ep}(C\u_j+R\w_j)+w_{\ep}(K\w_j+R\u_j)\bigg)\cdot n\ud{s},\no
\end{eqnarray*}
subject to
$\displaystyle\int_{B}u_{\ep}\ud{x}\ud{y}=0$ and $\displaystyle\int_{B}w_{\ep}\ud{x}\ud{y}=0$
for some ball $B\subset\Omega_{\ep}$, with $n$ the outward unit normal vector on $\p\Omega$.

Furthermore, $(\u_{\ep},\w_{\ep})$ converges in weak-$L^2(\Omega)$ as $\ep\rt0$ to $(\u_0,\w_0)$ where
\begin{eqnarray*}
\u_{0}=\sum_{i=1}^N\u_i+\na u_{0},\qquad \w_{0}=\sum_{i=1}^N\w_i+\na w_{0}.
\end{eqnarray*}
and $[u_0,w_0]$ is the unique minimizer of
\begin{eqnarray*}
I_{0}[u_0,w_0]&=&J[\na u_{0},\na w_{0}]+\sum_{i=1}^N\int_{\p\Omega}\bigg(u_{0}(C\u_i+R\w_i)+w_{0}(K\w_i+R\u_i)\bigg)\cdot n\ud{s},
\end{eqnarray*}
subject to
$\displaystyle\int_{B}u_0\ud{x}\ud{y}=0$ and $\displaystyle\int_{B}w_0\ud{x}\ud{y}=0$
for some ball $B\subset\Omega_{\ep}$
\end{theorem}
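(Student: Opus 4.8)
The existence, uniqueness, and explicit form of the minimizer $(\u_\ep,\w_\ep)$ of (\ref{problem.}) for fixed $\ep>0$ — namely $\u_\ep=\sum_i\u_i+\na u_\ep$, $\w_\ep=\sum_i\w_i+\na w_\ep$ with $(u_\ep,w_\ep)$ the unique minimizer of $I_\ep$ under the normalization $\int_B u_\ep\,\ud{x}\ud{y}=\int_B w_\ep\,\ud{x}\ud{y}=0$ — are already established in Theorem \ref{prelim} and Lemma \ref{mini lemma}, so nothing further is needed for that part. The only new content is the limit $\ep\rt0$, and the plan is to deduce it from the first-order $\Gamma$-convergence proved above together with the compactness results.

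First I would produce a candidate limit. By Lemma \ref{mini lemma} the regular parts obey the $\ep$-uniform bound $\nm{u_\ep}_{H^1(\Omega_\ep)}^2+\nm{w_\ep}_{H^1(\Omega_\ep)}^2\leq M$; extending by the operator of \cite{Cermelli.Leoni2005} gives $(\hat u_\ep,\hat w_\ep)\in H^1(\Omega)\times H^1(\Omega)$ with the same bound and agreeing with $(u_\ep,w_\ep)$ on $\Omega_\ep$, so along a subsequence $(\hat u_\ep,\hat w_\ep)\rightharpoonup(u_0,w_0)$ weakly in $H^1(\Omega)$ and hence $\na\hat u_\ep\rightharpoonup\na u_0$, $\na\hat w_\ep\rightharpoonup\na w_0$ weakly in $L^2(\Omega)$; since the singular parts $\u_i,\w_i$ are $\ep$-independent this gives the asserted convergence of $(\u_\ep,\w_\ep)$ to $(\u_0,\w_0)=(\sum_i\u_i+\na u_0,\sum_i\w_i+\na w_0)$ (understood on the regular parts, as $\u_i,\w_i\notin L^2(\Omega)$). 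To identify $(u_0,w_0)$: on $H_0^\ep$ the functional $J_\ep^{(1)}$ differs from $J_\ep$ only by the additive constant $\abs{\ln(\ep)}E_0$, so $(\u_\ep,\w_\ep)$ also minimizes $J_\ep^{(1)}$; by the first-order $\Gamma$-convergence together with the compactness provided by Lemma \ref{mini lemma}, every weak-$L^2$ cluster point of these minimizers minimizes $J_0^{(1)}$, and by Corollary \ref{corollary 2} such a minimizer is $(\na u_0+\sum_i\u_i,\na w_0+\sum_i\w_i)$ with $(u_0,w_0)$ minimizing $I[v_u,v_w]=J[\na v_u,\na v_w]+\sum_i\int_{\p\Omega}(v_u(C\u_i+R\w_i)+v_w(K\w_i+R\u_i))\cdot n\,\ud{s}$ under the normalization — which is exactly $I_0$. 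Strict convexity and coercivity of $I_0$ on $\{\int_B v=0\}$, proved as in Lemma \ref{mini lemma}, pin the $I_0$-minimizer uniquely, so the limit is subsequence-independent and the whole family converges.

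A more hands-on alternative is direct comparison in $I_\ep$: writing $I_\ep[u_\ep,w_\ep]=J_\ep[\na u_\ep,\na w_\ep]+(\text{$\p\Omega$-terms})-\sum_{j\neq i}\int_{\p B_\ep(\v d_i)}(u_\ep(C\u_j+R\w_j)+w_\ep(K\w_j+R\u_j))\cdot n\,\ud{s}$, one shows the core-boundary terms tend to $0$, that the $\p\Omega$-terms pass to the limit by compactness of the trace map, and that $J_\ep[\na u_\ep,\na w_\ep]$ is weakly lower semicontinuous, so $\liminf_{\ep\rt0}I_\ep[u_\ep,w_\ep]\geq I_0[u_0,w_0]$; testing $I_\ep$ against any fixed admissible competitor yields the matching upper bound, hence $(u_0,w_0)$ minimizes $I_0$, and uniqueness closes the argument. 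I expect the vanishing of those core-boundary integrals to be the main technical obstacle: $\u_j,\w_j$ with $j\neq i$ are bounded near $\v d_i$, but one needs a trace estimate for $u_\ep,w_\ep$ on the shrinking circles $\p B_\ep(\v d_i)$ uniform in $\ep$ — supplied by the annulus Poincar\'e/trace inequalities already used in Lemma \ref{mini lemma} and in the proof of Theorem \ref{compact theorem}, combined with $\abs{B_{2\ep}(\v d_i)\setminus B_\ep(\v d_i)}\rt0$ and the $H^1(\Omega)$-bound on $(\hat u_\ep,\hat w_\ep)$. This is precisely what makes the spurious contributions disappear and leaves $I_0$ in the limit.
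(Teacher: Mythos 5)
Your proposal is correct and follows essentially the same route as the paper: the paper's own proof is a one-liner that cites Theorem \ref{prelim} for existence and uniqueness and then asserts that ``$\Gamma$-convergence naturally yields that the minimizer of $J_{\ep}^{(1)}$ goes to the minimizer of $J_0^{(1)}$,'' and your primary argument is exactly this, with the details the paper calls obvious (the $\ep$-uniform $H^1$ bound giving equi-coercivity, extraction of a weak limit, identification via Corollary \ref{corollary 2}, and uniqueness of the $I_0$-minimizer to remove the subsequence) properly supplied. The hands-on direct-comparison alternative you sketch is not in the paper but is a reasonable backup.
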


\begin{proof}
The existence and uniqueness of minimizer have been shown in Theorem \ref{prelim}. $\Gamma$-convergence naturally yields that minimizer of $J_{\ep}^{(1)}$ goes to minimizer of $J_0^{(1)}$. Hence, this result is obvious.
\end{proof}

\begin{theorem}
Assume that (\ref{assumption}) holds. We have
\begin{eqnarray*}
J_{\ep}[\u_{\ep},\w_{\ep}]=\int_{\Omega_{\ep}}\f[\u_{\ep},\w_{\ep}]\ud{x}\ud{y}&=&E_0\ln\bigg(\frac{1}{\ep}\bigg)+F+o(1),
\end{eqnarray*}
where the core energy $E_0$ is defined in (\ref{et 001})
and the renormalized energy $F=F_{\textnormal{self}}+F_{\textnormal{int}}+F_{\textnormal{elastic}}$ is defined in (\ref{et 002}).
\end{theorem}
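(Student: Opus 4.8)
The plan is to obtain the expansion as an immediate consequence of Corollary \ref{corollary 2}, once one recognizes that the rescaled first-order functional $J_\ep^{(1)}$ coincides, on the admissible class $H_0^\ep$, with the physical energy $J_\ep$ shifted by an explicit $\ep$-dependent constant.

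First I would use Corollary \ref{corollary 1}, which gives $\inf_{\uu,\ww}J_0^{(0)}[\uu,\ww]=E_0$. Substituting this into the definition of $J_\ep^{(1)}$ shows that, for every $(\tilde\uu_\ep,\tilde\ww_\ep)\in H_0^\ep$,
\[
J_\ep^{(1)}[\tilde\uu_\ep,\tilde\ww_\ep]=\int_{\Omega_\ep}\f[\tilde\uu_\ep,\tilde\ww_\ep]\ud{x}\ud{y}-E_0\abs{\ln(\ep)}=J_\ep[\tilde\uu_\ep,\tilde\ww_\ep]-E_0\abs{\ln(\ep)}.
\]
Because the subtracted quantity is independent of $(\tilde\uu_\ep,\tilde\ww_\ep)$, the functionals $J_\ep^{(1)}$ and $J_\ep$ have the same minimizer over $H_0^\ep$; by Lemma \ref{mini lemma} and Theorem \ref{prelim} this minimizer exists, is unique, and equals $(\u_\ep,\w_\ep)$. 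Hence, in the notation of Corollary \ref{corollary 2}, one has $(\tilde\u_\ep',\tilde\w_\ep')=(\u_\ep,\w_\ep)$.

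Then I would apply Corollary \ref{corollary 2}, which yields $J_\ep^{(1)}[\u_\ep,\w_\ep]=F_{\textnormal{self}}+F_{\textnormal{int}}+F_{\textnormal{elastic}}+o(1)=F+o(1)$, and add back the constant:
\[
J_\ep[\u_\ep,\w_\ep]=J_\ep^{(1)}[\u_\ep,\w_\ep]+E_0\abs{\ln(\ep)}=E_0\ln\left(\frac{1}{\ep}\right)+F+o(1),
\]
using $\abs{\ln(\ep)}=\ln(1/\ep)$ for $\ep<1$. This is exactly the asserted identity.

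Since every analytic ingredient is already available — the compactness of Theorem \ref{compact theorem}, the first-order $\Gamma$-convergence of Theorem \ref{theorem 4}, the convergence of minimizers, and the evaluation of $\inf J_0^{(1)}$ in Corollary \ref{corollary 2} — there is no substantive obstacle here. The only step requiring a word of justification is the identification of the minimizer of $J_\ep^{(1)}$ with $(\u_\ep,\w_\ep)$, which follows purely from $J_\ep^{(1)}$ and $J_\ep$ differing by an additive constant on $H_0^\ep$. To avoid invoking uniqueness of minimizers one may instead reason at the level of infima: the fundamental theorem of $\Gamma$-convergence applied to Theorem \ref{theorem 4} gives $\inf J_\ep^{(1)}\to\inf J_0^{(1)}=F$, while the displayed identity gives $\inf J_\ep^{(1)}=J_\ep[\u_\ep,\w_\ep]-E_0\abs{\ln(\ep)}$, whence the conclusion.
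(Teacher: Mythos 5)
Your proposal is correct and is essentially the paper's own argument: the paper likewise rewrites $J_{\ep}[\u_{\ep},\w_{\ep}]$ as $E_0\ln(1/\ep)+J_{\ep}^{(1)}[\u_{\ep},\w_{\ep}]$ using $\inf J_0^{(0)}=E_0$ and then invokes Corollary \ref{corollary 2}. Your explicit remark that $J_{\ep}^{(1)}$ and $J_{\ep}$ differ by an $(\uu,\ww)$-independent constant on $H_0^{\ep}$, hence share the minimizer $(\u_{\ep},\w_{\ep})$, is a small but welcome clarification of a step the paper leaves implicit.
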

\begin{proof}
We directly compute
\begin{eqnarray*}
J_{\ep}[ \u_{\ep}, \w_{\ep}]&=&\abs{\ln(\ep)}J_{\ep}^{(0)}\left[\dfrac{ \u_{\ep}}{\abs{\ln(\ep)}^{1/2}},\dfrac{ \w_{\ep}}{\abs{\ln(\ep)}^{1/2}}\right]
=E_0\ln\bigg(\frac{1}{\ep}\bigg)+J_{\ep}^{(1)}[ \u_{\ep}, \w_{\ep}]=E_0\ln\bigg(\frac{1}{\ep}\bigg)+F+o(1).
\end{eqnarray*}
\end{proof}

\section{Application of Renormalized Energy}

\subsection{Interaction between Dislocations}

In this section, we will prove that the energy related to interaction between dislocation $F_{\textnormal{int}}$ obeys the inverse logarithmical law of the distance between two dislocations.
\begin{theorem}
Assume that (\ref{assumption}) holds. We have
\begin{eqnarray*}
F_{\textnormal{int}}&=&\sum_{i=1}^{N-1}\sum_{j=i}^N\frac{Cb_u^ib_u^j+Kb_w^ib_w^j+Rb_u^ib_w^j+Rb_w^ib_u^j}{2\pi}\ln\left(\frac{1}{\abs{\v d_i-\v d_j}}\right)+O(1).
\end{eqnarray*}
\end{theorem}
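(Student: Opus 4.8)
The plan is to reduce $F_{\textnormal{int}}$ to a single scalar Green-type integral for the logarithmic kernel and evaluate it by excision. Write $r_k:=\abs{\v d-\v d_k}$ and $\na^\perp f:=(-\p_y f,\p_x f)$. The explicit single-dislocation formulas give $\u_k=\dfrac{b_u^k}{2\pi}\na^\perp\log r_k$ and $\w_k=\dfrac{b_w^k}{2\pi}\na^\perp\log r_k$, and since $\na^\perp$ is $\na$ followed by a rotation, $\na^\perp a\cdot\na^\perp b=\na a\cdot\na b$. Consequently every summand of $F_{\textnormal{int}}$ (the double sum understood over pairs $i<j$; the divergent self-contributions $i=j$ are the part regularized into $E_{\textnormal{self}}$) is a scalar multiple of
\begin{eqnarray*}
G_{ij}:=\int_{\Omega}\na\log r_i\cdot\na\log r_j\,\ud x\,\ud y,
\end{eqnarray*}
namely the $(i,j)$ term equals $\dfrac{Cb_u^ib_u^j+Kb_w^ib_w^j+Rb_u^ib_w^j+Rb_w^ib_u^j}{4\pi^2}\,G_{ij}$. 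It therefore suffices to prove $G_{ij}=2\pi\log\dfrac{1}{\abs{\v d_i-\v d_j}}+O(1)$.

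To evaluate $G_{ij}$, first observe the integrand lies in $L^1(\Omega)$: near $\v d_i$ it is $r_i^{-1}$ times a bounded factor, symmetrically near $\v d_j$, and it is smooth elsewhere since the $\v d_k$ are interior points. Remove balls $B_\delta(\v d_i)$ and $B_\delta(\v d_j)$ and integrate by parts on $\Omega_\delta:=\Omega\setminus\big(B_\delta(\v d_i)\cup B_\delta(\v d_j)\big)$, using that $\log r_j$ is harmonic there:
\begin{eqnarray*}
\int_{\Omega_\delta}\na\log r_i\cdot\na\log r_j\,\ud x\,\ud y=\int_{\p\Omega}\log r_i\,\p_n\log r_j\,\ud s+\int_{\p B_\delta(\v d_i)}\log r_i\,\p_\nu\log r_j\,\ud s+\int_{\p B_\delta(\v d_j)}\log r_i\,\p_\nu\log r_j\,\ud s,
\end{eqnarray*}
where $\nu$ points into the removed ball on each small circle. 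On $\p B_\delta(\v d_i)$ the factor $\log r_i=\log\delta$ is constant and $\p_\nu\log r_j$ is bounded, so that term is $O(\delta\abs{\log\delta})\to0$. On $\p B_\delta(\v d_j)$ one has $\p_\nu\log r_j=-1/\delta$ while $\log r_i\to\log\abs{\v d_i-\v d_j}$ uniformly, so that term tends to $-2\pi\log\abs{\v d_i-\v d_j}$. Since the left-hand side converges to $G_{ij}$ by the $L^1$ bound, letting $\delta\to0$ gives
\begin{eqnarray*}
G_{ij}=-2\pi\log\abs{\v d_i-\v d_j}+\int_{\p\Omega}\log r_i\,\p_n\log r_j\,\ud s.
\end{eqnarray*}

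It remains to note the last integral is $O(1)$: as the dislocations lie at a fixed positive distance from $\p\Omega$, both $\log r_i$ and $\p_n\log r_j=n\cdot(\v d-\v d_j)/r_j^2$ are bounded on $\p\Omega$ and $\p\Omega$ has finite length, so no further logarithmic growth is hidden there. Substituting $G_{ij}=2\pi\log\frac{1}{\abs{\v d_i-\v d_j}}+O(1)$ into each pair and summing proves the claim. The computation is quite soft; the only point needing care is the bookkeeping of the three boundary terms in the integration by parts — checking that the circle about $\v d_i$ contributes nothing in the limit, that the circle about $\v d_j$ produces exactly the logarithm with constant $-2\pi$, and that the outer boundary term is genuinely $O(1)$ rather than concealing a divergence.
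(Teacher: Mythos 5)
Your argument is correct, and it is a genuinely different route from the one in the paper. You reduce every cross term to the scalar integral $G_{ij}=\int_{\Omega}\na\log r_i\cdot\na\log r_j$ via the rotation identity $\na^\perp a\cdot\na^\perp b=\na a\cdot\na b$, and then evaluate $G_{ij}$ by excising small balls and exploiting harmonicity of $\log r_j$; the three boundary terms are handled correctly (the circle about $\v d_i$ contributes $O(\delta\abs{\log\delta})$, the circle about $\v d_j$ yields exactly $-2\pi\log\abs{\v d_i-\v d_j}$ because $\p_\nu\log r_j=-1/\delta$ against circumference $2\pi\delta$, and the outer term is bounded provided the dislocations stay a fixed distance from $\p\Omega$ — the same implicit hypothesis the paper uses), and your constant $\tfrac{1}{4\pi^2}\cdot 2\pi=\tfrac{1}{2\pi}$ matches the statement. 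The paper instead keeps the fields in their original form, introduces a branch cut $\gamma$ from $\v d_j$ to $\p\Omega$ so that $\u_j=\na U$, $\w_j=\na W$ on the simply connected set $\Omega\setminus\gamma$ with jumps $[U]=b_u^j$, $[W]=b_w^j$, integrates by parts against the divergence-free field $\u_i$, and computes the resulting line integral along $\gamma$ explicitly using the closed form of $\u_i$. Your version buys a cleaner, purely single-valued computation that avoids the multivalued potential and the parametrization of the cut, at the cost of being tied to the specific logarithmic structure of the hexagonal case; the paper's cut construction is the one that generalizes when the single-dislocation fields are not exact perpendicular gradients of a radial potential. You also correctly flag that the sum must be read over pairs $i<j$ (the $j=i$ terms written in the paper's index range would be the divergent self-energies already absorbed into $E_{\textnormal{self}}$), which is a defect of the paper's notation rather than of your proof.
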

\begin{proof}
Since
\begin{eqnarray*}
F_{\textnormal{int}}&=&\sum_{i=1}^{N-1}\sum_{j=i}^N\int_{\Omega}\bigg(C(\u_i\cdot\u_j)+K(\w_i\cdot\w_j)+R(\u_i\cdot\w_j)+R(\u_j\cdot\w_i)\bigg),
\end{eqnarray*}
let $\v d_i,\v d_j\in\Omega$ and let $\gamma$ be a segment of line that connects $\v d_j$ to $\p\Omega$ and is parallel to $\v d_i-\v d_j$. We rewrite
\begin{eqnarray*}
\gamma=\{\v d\in\Omega: \v d=\v d_j+s(\v d_j-\v d_i)\ \ \text{for}\ \ s\in[0,\bar s]\}
\end{eqnarray*}
where $\bar s$ depends on the distance between $\d d_i$, $\v d_j$ and $\p\Omega$. Let
\begin{eqnarray*}
\v m=\left(\frac{\v d_j-\v d_i}{\abs{\v d_j-\v d_i}}\right)^{\bot}
\end{eqnarray*}
indicate the unit vector perpendicular to $\v d_j-\v d_i$. Note that although $\Omega\backslash\{\v d_j\}$ is not simply connected, $\Omega\backslash\gamma$ is. Hence, due to curl-free condition, there exist $U$ and $W$ such that $\u_j=\na U$ and $\w_j=\na W$ in $\Omega\backslash\gamma$ such that $[U]=b_u^j$ and $[W]=b_w^j$, where $[\cdot]$ denotes the jump across $\gamma$. By the divergence theorem, we have
\begin{eqnarray*}
&&\int_{\Omega}\bigg(C(\u_i\cdot\u_j)+K(\w_i\cdot\w_j)+R(\u_i\cdot\w_j)+R(\w_i\cdot\u_j)\bigg)\\
&=&\int_{\Omega\backslash\gamma}\bigg(C(\u_i\cdot\na U)+K(\w_i\cdot\na W)+R(\u_i\cdot\na W)+R(\w_i\cdot\na U)\bigg)\no\\
&=&\int_{\p\Omega}\bigg(C\u_i(U\cdot n)+K\w_i(W\cdot n)+R\u_i(W\cdot n)+R\w_i(U\cdot n)\bigg)\ud{s}\no\\
&&-\int_{\gamma}\bigg(C\u_i[U]+K\w_i[W]+R\u_i[W]+R\w_i[U]\bigg)\cdot\v m\ud{s}.\no
\end{eqnarray*}
The first integral is bounded since all quantities are uniformly bounded on $\p\Omega$. For the second integral, we estimate
\begin{eqnarray*}
&&-\int_{\gamma}\bigg(C\u_i[U]+K\w_i[W]+R\u_i[W]+R\w_i[U]\bigg)\cdot\v m\ud{s}\\
&=&\int_{\gamma}\bigg(C\u_ib_u^j+K\w_ib_w^j+R\u_ib_w^j+R\w_ib_u^j\bigg)\cdot\v m\ud{s}.\no
\end{eqnarray*}
By explicit formula (\ref{mt temp 12}) and (\ref{mt temp 13}), we know
\begin{eqnarray*}
\u_i(\v d)=-\frac{b_u^i}{2\pi}\frac{\v m}{\abs{\v d-\v d_i}},\qquad
\w_i(\v d)=-\frac{b_w^i}{2\pi}\frac{\v m}{\abs{\v d-\v d_i}}.
\end{eqnarray*}
Hence, we have
\begin{eqnarray*}
&&\int_{\gamma}\bigg(C\u_ib_u^j+K\w_ib_w^j+R\u_ib_w^j+R\w_ib_u^j\bigg)\cdot\v m\ud{s}\\
&=&\int_{\gamma}\frac{Cb_u^ib_u^j+Kb_w^ib_w^j+Rb_u^ib_w^j+Rb_w^ib_u^j}{2\pi}\frac{1}{\abs{\v d-\v d_i}}\ud{s}\no\\
&=&\frac{Cb_u^ib_u^j+Kb_w^ib_w^j+Rb_u^ib_w^j+Rb_w^ib_u^j}{2\pi}\int_0^{\bar s}\frac{1}{\abs{\v d_i-\v d_j}+s}\ud{s}\no\\
&=&\frac{Cb_u^ib_u^j+Kb_w^ib_w^j+Rb_u^ib_w^j+Rb_w^ib_u^j}{2\pi}\left(\ln\left(\frac{1}{\abs{\v d_i-\v d_j}}\right)+\ln\bigg(\abs{\v d_i-\v d_j}+\bar s\bigg)\right).\no
\end{eqnarray*}
The result follows since we always have $\bar s>0$.
\end{proof}

\subsection{Peach-K\"{o}hler Force}

The Peach-K\"{o}hler Force acting on the dislocation $\v d_k$ is given by $\na_{\v d_k}F$ (see \cite{Gurtin1995}). In this section, we will show its relation with the renormalized energy. Here we first present three lemmas proved in \cite{Cermelli.Leoni2005}.
\begin{lemma}
Define
\begin{eqnarray*}
D_k^Vf(\v d)=\frac{\ud{}}{\ud{\theta}}f(\v d; \v d_1,\v d_2,\cdots,\v d_k+\theta\v V,\cdots,\v d_N)\bigg|_{\theta=0}.
\end{eqnarray*}
Then we have
\begin{eqnarray*}
D_k^V\u_k&=&0\ \ \text{for}\ \ k\neq i,\\
D_k^V\w_k&=&0\ \ \text{for}\ \ k\neq i,\\
D_k^V\u_k&=&-D\u_k\cdot\v V=-\na(\u_k\cdot\v V),\\
D_k^V\w_k&=&-D\w_k\cdot\v V=-\na(\w_k\cdot\v V),\\
D_k^V\u_0&=&\na U=\na(D_k^Vu_0-\u_k\cdot\v V),\\
D_k^V\w_0&=&\na W=\na(D_k^Vw_0-\w_k\cdot\v V)
\end{eqnarray*}
where $D$ is the derivative with respect to $\v d$.
\end{lemma}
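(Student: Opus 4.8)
The plan is to split the six identities into two groups. The first four involve only the explicit single--dislocation fields $\u_i,\w_i$ from (\ref{mt temp 12})--(\ref{mt temp 13}) and follow by direct differentiation; the last two involve the equilibrium field $\u_0=\sum_i\u_i+\na u_0$ (and likewise $\w_0=\sum_i\w_i+\na w_0$, with $(u_0,w_0)$ the minimizer of $I_0$), and I would reduce them to the first four once the parametric differentiability of $(u_0,w_0)$ is in hand.

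For the first group, the point is that $\u_i$ depends on $\v d$ and on the dislocation location only through their difference: $\u_i(\v d)=\tfrac{b_u^i}{2\pi}\,|\v d-\v d_i|^{-2}\big(-(y-y_i),\,x-x_i\big)$, and similarly for $\w_i$. Hence, if $i\neq k$, perturbing $\v d_k\mapsto\v d_k+\theta\v V$ does not change $\u_i$ or $\w_i$, so $D_k^V\u_i=D_k^V\w_i=0$. If $i=k$, the same formula gives the translation identity $\u_k(\v d;\v d_k+\theta\v V)=\u_k(\v d-\theta\v V;\v d_k)$; differentiating at $\theta=0$ yields $D_k^V\u_k=-D\u_k\cdot\v V$, where $D$ is the Jacobian in $\v d$. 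Because $\u_k$ is curl--free off $\v d_k$ (equivalently $\u_k=\na U_k$ with $U_k=\tfrac{b_u^k}{2\pi}\arctan\!\big(\tfrac{y-y_k}{x-x_k}\big)$), the matrix $D\u_k=\na^2U_k$ is symmetric, so for a constant vector $\v V$ one has $D\u_k\cdot\v V=\na(\na U_k\cdot\v V)=\na(\u_k\cdot\v V)$; this gives the third identity, and the fourth is the same with $w$ replacing $u$.

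For the last two identities I would first record that $(u_0,w_0)$ depends differentiably on $\v d_1,\dots,\v d_N$ (for configurations with the $\v d_k$ bounded away from $\p\Omega$ and from one another), with the $\v d$--gradient commuting with $D_k^V$: $(u_0,w_0)$ solves the Euler--Lagrange system attached to $I_0$, whose differential operator is fixed (constant, positive--definite Hooke matrix) and whose boundary data depend smoothly on the $\v d_k$ through the traces of $\u_i,\w_i$, so an implicit--function/elliptic--regularity argument as in \cite{Cermelli.Leoni2005} applies. Granting this, differentiate $\u_0=\sum_i\u_i+\na u_0$: by the first group every $\u_i$ with $i\neq k$ contributes $0$, $D_k^V\u_k=-\na(\u_k\cdot\v V)$, and $D_k^V\na u_0=\na D_k^Vu_0$, so
\[
D_k^V\u_0=\na\big(D_k^Vu_0-\u_k\cdot\v V\big)=:\na U,
\]
and symmetrically $D_k^V\w_0=\na\big(D_k^Vw_0-\w_k\cdot\v V\big)=:\na W$, which are the remaining identities.

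The main obstacle is precisely the differentiability claim in the previous paragraph and the legitimacy of exchanging $\tfrac{d}{d\theta}$ with $\na_{\v d}$: since $\u_0$ inherits the $|\v d-\v d_k|^{-1}$ singularity of $\u_k$ and the forcing functionals in $I_0$ move with the $\v d_k$, the differentiation must be set up in a topology in which the parametric dependence is visibly smooth --- for instance subtract the explicit singular part and work in $H^1$ of a fixed domain, or differentiate the weak formulation tested against fixed functions. This is the content of the corresponding lemma in \cite{Cermelli.Leoni2005}, whose proof carries over to the present coupled phonon/phase setting without change because the constitutive matrix is constant and positive definite.
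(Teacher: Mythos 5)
The paper itself offers no proof of this lemma: it is introduced with the sentence ``Here we first present three lemmas proved in \cite{Cermelli.Leoni2005}'' and is imported without argument, so there is no in-paper proof to compare against. Your reconstruction is correct and supplies what the paper omits. The first four identities do follow from the observation that $\u_i,\w_i$ depend on $(\v d,\v d_i)$ only through $\v d-\v d_i$ (so perturbing $\v d_k$ with $k\neq i$ changes nothing, and perturbing $\v d_k$ itself acts as a translation in $\v d$), and your justification of $D\u_k\cdot\v V=\na(\u_k\cdot\v V)$ via the symmetry of the Hessian of the locally defined potential $U_k$ is the right mechanism --- equivalently, it is the curl-free condition $\p_x(\u_k)_y=\p_y(\u_k)_x$ applied componentwise. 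The last two identities reduce, as you say, to differentiating $\u_0=\sum_i\u_i+\na u_0$ term by term; the only genuinely nontrivial ingredient is the differentiable dependence of $(u_0,w_0)$ on the dislocation positions and the interchange of $\na$ with $D_k^V$, which you correctly isolate as the crux and which is precisely what the cited lemma of \cite{Cermelli.Leoni2005} establishes, with a proof that transfers verbatim since the constitutive matrix here is constant and positive definite under (\ref{assumption}). One minor remark: the statement as printed contains a typo --- the first two lines should read $D_k^V\u_i=0$ and $D_k^V\w_i=0$ for $i\neq k$ --- and your reading is the intended one.
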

\begin{lemma}
We have
\begin{eqnarray*}
\frac{\ud{}}{\ud{\theta}}\int_{B_{\ep}(\v d_0+\theta\v V)}f(\v d,\theta)\ud{x}\ud{y}\bigg|_{\theta=0}&=&\int_{B_{\ep}(\v d_0)}D_{\theta}f(\v d,0)\ud{x}\ud{y}\\
&=&\int_{B_{\ep}(\v d_0)}\p_{\theta}f(\v d,0)\ud{x}\ud{y}+\int_{\p B_{\ep}(\v d_0)}f(\v d,0)\v V\cdot n\ud{s},\no\\
\frac{\ud{}}{\ud{\theta}}\int_{\p B_{\ep}(\v d_0+\theta\v V)}g(\v d,\theta)\ud{s}\bigg|_{\theta=0}&=&\int_{\p B_{\ep}(\v d_0)}D_{\theta}g(\v d,0)\ud{s},\\
\frac{\ud{}}{\ud{\theta}}\int_{\Omega\backslash B_{\ep}(\v d_0+\theta\v V)}r(\v d,\theta)\ud{x}\ud{y}\bigg|_{\theta=0}&=&\int_{\Omega\backslash B_{\ep}(\v d_0)}\p_{\theta}r(\v d,0)\ud{x}\ud{y}-\int_{\p B_{\ep}(\v d_0)}r(\v d,0)\v V\cdot n\ud{s},\no
\end{eqnarray*}
where $D_{\theta}=\p_{\theta}+\v V\cdot\na$.
\end{lemma}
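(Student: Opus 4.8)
The plan is to recognize all three formulas as Reynolds-type transport identities for a region that is merely \emph{translated} with constant velocity $\v V$; I would freeze each moving region by a translation change of variables, differentiate under the integral sign, and then extract the boundary terms from the divergence theorem. For the first identity, substitute $\v d=\v d_0+\theta\v V+\v\xi$ so that $\v\xi$ ranges over the fixed ball $B_\ep(0)$; this substitution is a rigid translation with unit Jacobian, hence
\begin{eqnarray*}
\int_{B_\ep(\v d_0+\theta\v V)}f(\v d,\theta)\ud{x}\ud{y}=\int_{B_\ep(0)}f(\v d_0+\theta\v V+\v\xi,\theta)\ud{x}\ud{y}.
\end{eqnarray*}
Differentiating in $\theta$ under the integral sign at $\theta=0$ gives $\int_{B_\ep(0)}\big(\v V\cdot\na f+\p_\theta f\big)(\v d_0+\v\xi,0)\ud{x}\ud{y}$, which after undoing the substitution is $\int_{B_\ep(\v d_0)}D_\theta f(\v d,0)\ud{x}\ud{y}$ with $D_\theta=\p_\theta+\v V\cdot\na$; this is the first equality. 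Because $\v V$ is constant we have $\v V\cdot\na f=\na\cdot(f\v V)$, so the divergence theorem on $B_\ep(\v d_0)$ converts that term into $\int_{\p B_\ep(\v d_0)}f\,\v V\cdot n\ud{s}$ while the $\p_\theta f$ term is left as a volume integral, giving the second equality.

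For the second identity I would parametrize the sphere as $\{\v d_0+\theta\v V+\ep\omega:\abs{\omega}=1\}$, so that $\int_{\p B_\ep(\v d_0+\theta\v V)}g\ud{s}=\ep\int_{\abs{\omega}=1}g(\v d_0+\theta\v V+\ep\omega,\theta)\ud{\omega}$, and differentiate under the integral sign to obtain $\int_{\p B_\ep(\v d_0)}D_\theta g(\v d,0)\ud{s}$; there is no additional boundary contribution here because the radius $\ep$ stays fixed (the sphere only translates) and a closed curve has no boundary. For the third identity I would reduce to the first: for $\theta$ small enough that $B_\ep(\v d_0+\theta\v V)\subset\Omega$, write $\int_{\Omega\backslash B_\ep(\v d_0+\theta\v V)}r=\int_\Omega r-\int_{B_\ep(\v d_0+\theta\v V)}r$. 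The integral over the fixed domain $\Omega$ contributes only $\int_\Omega\p_\theta r(\v d,0)\ud{x}\ud{y}$, while by the first identity the ball integral contributes $\int_{B_\ep(\v d_0)}\p_\theta r(\v d,0)\ud{x}\ud{y}+\int_{\p B_\ep(\v d_0)}r\,\v V\cdot n\ud{s}$; subtracting yields $\int_{\Omega\backslash B_\ep(\v d_0)}\p_\theta r(\v d,0)\ud{x}\ud{y}-\int_{\p B_\ep(\v d_0)}r(\v d,0)\,\v V\cdot n\ud{s}$, exactly as stated.

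The proof is otherwise routine; the only point that needs care is the legitimacy of differentiating under the integral sign, which requires $f$, $g$, $r$ to be $C^1$ jointly in $(\v d,\theta)$ on a neighborhood of the relevant compact set so that dominated convergence applies. In the intended use of this lemma the integrands are assembled from the explicit single-dislocation fields $\u_j,\w_j$ (with $j$ away from the moving dislocation) and from the elastic correctors $\u_0,\w_0$, all of which are smooth near $\p B_r(\v d_k)$ once the cores have been excised, so this hypothesis holds automatically and no singularity interferes. Since these are classical transport formulas one may, as the text notes, simply cite \cite{Cermelli.Leoni2005}; in either case they are used, together with the preceding lemma on the derivatives $D_k^V$, to carry out the differentiation of $F$ with respect to $\v d_k$ that proves Theorem \ref{theorem 6}.
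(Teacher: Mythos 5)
Your three derivations are correct as written, and the paper offers no proof of its own to compare against: it states these identities as ``three lemmas proved in \cite{Cermelli.Leoni2005}'' and moves on. Your first two arguments (rigid translation to freeze the domain, unit Jacobian, differentiation under the integral, then the divergence theorem using $\v V\cdot\na f=\na\cdot(f\v V)$ for constant $\v V$) are exactly the standard route and need no further comment.

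The one point I would push back on is your proof of the third identity by the subtraction $\int_{\Omega\backslash B_{\ep}(\v d_0+\theta\v V)}r=\int_{\Omega}r-\int_{B_{\ep}(\v d_0+\theta\v V)}r$. This presupposes that $r(\cdot,0)$ and $\p_{\theta}r(\cdot,0)$ are integrable over the full ball $B_{\ep}(\v d_0)$, and that is precisely what fails in the intended application: in Step~1 of the Peach--K\"{o}hler computation the integrand over $\Omega_{\ep}$ is $\f[\u_0,\w_0]$, which behaves like $\abs{\v d-\v d_k}^{-2}$ near the excised dislocation and is therefore not in $L^1(B_{\ep}(\v d_k))$. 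So while your argument proves the lemma for regular $r$, it does not cover the case the lemma is actually invoked for. The fix is to prove the exterior identity directly rather than by subtraction: take a family of diffeomorphisms of $\Omega\backslash B_{\ep}(\v d_0)$ onto $\Omega\backslash B_{\ep}(\v d_0+\theta\v V)$ that equals translation by $\theta\v V$ in a neighborhood of the inner boundary and the identity near $\p\Omega$ (smoothly interpolated in an annulus away from the singularity), change variables, and differentiate; the Jacobian correction in the transition region produces exactly the term $-\int_{\p B_{\ep}(\v d_0)}r\,\v V\cdot n\ud{s}$ after an integration by parts, with the sign reflecting that the outward normal of the exterior region points into the ball. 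Your remark about needing $C^1$ regularity for differentiation under the integral sign is well taken, but the claim that ``no singularity interferes'' is too quick for this third formula.
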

\begin{lemma}
We have
\begin{eqnarray*}
D_{\theta}\u_i(\v d; \v d_i+\theta\v V)=0,
\end{eqnarray*}
for any $\v V$.
\end{lemma}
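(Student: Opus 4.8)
The plan is to reduce the identity to the translation invariance of a single-dislocation field. First I would record that, by the explicit formula (\ref{mt temp 12}), for a fixed Burger's modulus the field $\u_i$ with core at a point $\v p$ is a pure translate, namely $\u_i(\v d;\v p)=\v G_i(\v d-\v p)$ for all $\v d\neq\v p$, where
\begin{eqnarray*}
\v G_i(\v\xi):=\frac{b_u^i}{2\pi}\,\frac{(-\xi_2,\xi_1)}{\abs{\v\xi}^2}
\end{eqnarray*}
is a fixed vector field, independent of the core location; the same holds for $\w_i$ with $b_u^i$ replaced by $b_w^i$. Substituting $\v p=\v d_i+\theta\v V$ gives $\u_i(\v d;\v d_i+\theta\v V)=\v G_i(\v d-\v d_i-\theta\v V)$, which depends on $\v d$ and on $\theta$ only through the single combination $\v d-\v d_i-\theta\v V$.

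The computation is then immediate. Writing $\na$ for the spatial gradient in $\v d$, the chain rule yields
\begin{eqnarray*}
\p_\theta\,\u_i(\v d;\v d_i+\theta\v V)=-(\v V\cdot\na)\v G_i(\v d-\v d_i-\theta\v V)=-(\v V\cdot\na)\,\u_i(\v d;\v d_i+\theta\v V),
\end{eqnarray*}
so that, since $D_\theta=\p_\theta+\v V\cdot\na$,
\begin{eqnarray*}
D_\theta\,\u_i(\v d;\v d_i+\theta\v V)=\p_\theta\,\u_i+\v V\cdot\na\,\u_i=0
\end{eqnarray*}
at every $\v d$ outside the moving core $\{\v d=\v d_i+\theta\v V\}$, for every $\v V$. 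The identical argument applies verbatim to $\w_i$.

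There is no genuine obstacle here; the only point requiring care is the bookkeeping of which variable each derivative acts on: $\p_\theta$ differentiates the parameter that slides the dislocation point, $\na$ is the physical gradient in $\v d$, and their sum $D_\theta$ is precisely the convective derivative along the translating flow, which by construction annihilates any function of $\v d-\v d_i-\theta\v V$. This translation-invariance identity, together with the two preceding lemmas, is what will subsequently permit the $\v d_k$-derivative of the renormalized energy to be rewritten as the boundary flux of the Eshelby stress tensor appearing in Theorem \ref{theorem 6}.
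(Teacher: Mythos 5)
Your proof is correct. Note that the paper itself offers no argument for this lemma, simply citing it (together with the two preceding lemmas) from Cermelli--Leoni; your translation-invariance computation --- writing $\u_i(\v d;\v p)=\v G_i(\v d-\v p)$ via the explicit formula (\ref{mt temp 12}) and observing that the convective derivative $D_\theta=\p_\theta+\v V\cdot\na$ annihilates any function of $\v d-\v d_i-\theta\v V$ away from the moving core --- is exactly the standard argument that reference supplies, so there is nothing to add.
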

Now we can prove the main result.
\begin{theorem}
Assume that (\ref{assumption}) holds. The Peach-K\"{o}hler force acting at $\v d_k$ is given by
\begin{eqnarray*}
\na_{\v d_k}F
&=&-\int_{\p B_r(\v d_k)}\bigg(\f[\u_0,\w_0]\i-(C\u_0\otimes\u_0+K\w_0\otimes\w_0+R\u_0\otimes\w_0+R\w_0\otimes\u_0)\bigg)\cdot n\ud{s},
\end{eqnarray*}
for $r<\dfrac{1}{2}\min_k\Big(dist(\v d_k,\p\Omega)\Big)$.
\end{theorem}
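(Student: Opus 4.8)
The plan is to compute the directional derivative $\na_{\v d_k}F\cdot\v V=D_k^VF$ in an arbitrary fixed direction $\v V\in\r^2$, with $D_k^V$ the derivative obtained by moving $\v d_k$ along $\v V$ as in the lemmas above, and to read off the stated boundary integral tested against $\v V$. The first step is to rewrite $F$ so that its dependence on $\v d_k$ is localized. Writing $\u_0=\sum_i\u_i+\na u_0$, $\w_0=\sum_i\w_i+\na w_0$, expanding $\f[\u_0,\w_0]$ into self-, interaction- and mixed parts, and integrating the mixed part by parts over $\Omega^{(\rho)}:=\Omega\backslash\bigcup_iB_\rho(\v d_i)$ — using that each single--dislocation field is divergence--free off its centre and that $\u_i\cdot n=\w_i\cdot n=0$ on circles centred at $\v d_i$ — one checks the identity
\[
F=\lim_{\rho\to0}\left(\int_{\Omega^{(\rho)}}\f[\u_0,\w_0]\,\ud{x}\,\ud{y}+\sum_{i=1}^N\frac{C(b_u^i)^2+K(b_w^i)^2+2Rb_u^ib_w^i}{4\pi}\ln\rho\right).
\]
Because the logarithmic coefficients do not depend on $\v d_k$, it suffices to differentiate the bulk integral for fixed small $\rho<r$: $\na_{\v d_k}F\cdot\v V=\lim_{\rho\to0}D_k^V\int_{\Omega^{(\rho)}}\f[\u_0,\w_0]\,\ud{x}\,\ud{y}$.

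I would then differentiate using the transport lemma for domains $\Omega\backslash B_\rho(\v d_k+\theta\v V)$ (the other excised balls being fixed, they contribute no boundary term) together with the displacement--derivative lemmas above. The transport lemma gives
\[
D_k^V\!\int_{\Omega^{(\rho)}}\f[\u_0,\w_0]\,\ud{x}\,\ud{y}=\int_{\Omega^{(\rho)}}D_k^V\f[\u_0,\w_0]\,\ud{x}\,\ud{y}-\int_{\p B_\rho(\v d_k)}\f[\u_0,\w_0]\,\v V\cdot n\,\ud{s}.
\]
Using $D_k^V\u_0=\na(D_k^Vu_0-\u_k\cdot\v V)$ and its analogue for $\w_0$, one writes $D_k^V\f[\u_0,\w_0]=\na\Phi_u\cdot(C\u_0+R\w_0)+\na\Phi_w\cdot(K\w_0+R\u_0)$ with $\Phi_u:=D_k^Vu_0-\u_k\cdot\v V$ and $\Phi_w:=D_k^Vw_0-\w_k\cdot\v V$. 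Integrating this by parts over $\Omega^{(\rho)}$, the interior integral vanishes by the equilibrium equations $\na\cdot(C\u_0+R\w_0)=\na\cdot(K\w_0+R\u_0)=0$, and the integral over $\p\Omega$ vanishes by the natural boundary conditions $(C\u_0+R\w_0)\cdot n=(K\w_0+R\u_0)\cdot n=0$ satisfied by the limiting minimizer $(\u_0,\w_0)$; what remains is $-\sum_l\int_{\p B_\rho(\v d_l)}\big(\Phi_u(C\u_0+R\w_0)+\Phi_w(K\w_0+R\u_0)\big)\cdot n\,\ud{s}$.

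Passing to $\rho\to0$: near $\v d_l$ write $\u_0=\u_l+h_l$, $\w_0=\w_l+j_l$ with $h_l,j_l$ smooth, and note that $D_k^Vu_0,D_k^Vw_0$ are smooth near every $\v d_l$ (elliptic regularity for the Euler--Lagrange system differentiated in $\v d_k$). For $l\neq k$ all factors of the corresponding circle integral are bounded — in particular $(C\u_0+R\w_0)\cdot n=(Ch_l+Rj_l)\cdot n$ on $\p B_\rho(\v d_l)$ since $\u_l\cdot n=\w_l\cdot n=0$ there — so that integral is $O(\rho)$ and drops. For $l=k$ only the singular parts $-\u_k\cdot\v V$ and $-\w_k\cdot\v V$ of $\Phi_u$ and $\Phi_w$ survive the limit, while in the transport boundary term only the mixed part of $\f[\u_0,\w_0]$ survives (the self--part $\f[\u_k,\w_k]$ is constant on $\p B_\rho(\v d_k)$ and $\int_{\p B_\rho}n\,\ud s=0$, and $\f[h_k,j_k]=O(1)$ contributes only $O(\rho)$). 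Collecting these two survivors, and using $\u_k\cdot n=\w_k\cdot n=0$ on $\p B_\rho(\v d_k)$ once more to replace $\u_0\cdot n,\w_0\cdot n$ by $h_k\cdot n,j_k\cdot n$ (so that $\u_0\otimes\u_0$-type terms assemble), a short rearrangement yields $\na_{\v d_k}F\cdot\v V=-\lim_{\rho\to0}\int_{\p B_\rho(\v d_k)}T_0\cdot n\,\ud s\cdot\v V$, where $T_0:=\f[\u_0,\w_0]\,\i-(C\u_0\otimes\u_0+K\w_0\otimes\w_0+R\u_0\otimes\w_0+R\w_0\otimes\u_0)$. Since a direct computation using $\na\times\u_0=\na\times\w_0=0$ and the equilibrium equations shows $\na\cdot T_0=0$ in $\Omega\backslash\{\v d_1,\dots,\v d_N\}$, the flux $\int_{\p B_\rho(\v d_k)}T_0\cdot n\,\ud s$ is independent of $\rho$ on the range $\rho<\tfrac12\min_k dist(\v d_k,\p\Omega)$; as $\v V$ was arbitrary, the formula follows for every such $r$.

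The step I expect to be the main obstacle is the boundary bookkeeping just sketched: showing that every circle integral around a dislocation $\v d_l$ with $l\neq k$ is genuinely $o(1)$, and that the two contributions that do survive on $\p B_\rho(\v d_k)$ — one coming from $\int_{\Omega^{(\rho)}}D_k^V\f$ after integration by parts, the other the transport boundary term — recombine into exactly the flux of $T_0$ with no residual. All of these cancellations ultimately rest on the same two observations, namely that $\u_i\cdot n=\w_i\cdot n=0$ on circles centred at $\v d_i$ and that $D_k^Vu_0,D_k^Vw_0$ are regular near the dislocations, so most of the work lies in turning those observations into precise estimates of the $\rho\to0$ limits.
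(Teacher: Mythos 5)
Your argument is correct in outline and reaches the stated formula, but it takes a genuinely different route from the paper. The paper fixes the core radius and splits $F=G+H$, with $G=\int_{\Omega_{\ep}}\f[\u_0,\w_0]$ and $H$ an ad hoc collection of ball and circle corrections; it then computes $D_k^V$ of every piece with the same transport and displacement-derivative lemmas you invoke, and verifies by hand that the unwanted circle terms $I_2$, $I_3$ produced by differentiating $G$ are cancelled exactly by the contributions of $H$ (Steps 2--6 of its proof). You instead represent $F$ as the renormalized limit $\lim_{\rho\to0}\big(\int_{\Omega^{(\rho)}}\f[\u_0,\w_0]+\sum_i\tfrac{C(b_u^i)^2+K(b_w^i)^2+2Rb_u^ib_w^i}{4\pi}\ln\rho\big)$ --- an identity which does hold, by the expansion of $\f[\u_0,\w_0]$ and the same integrations by parts used in Section 3.3 --- so that the $\v d_k$-dependence sits entirely in one bulk integral, and a single transport-lemma computation plus $\rho\to0$ asymptotics on the shrinking circles replaces the paper's term-by-term cancellation. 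What your approach buys is conceptual economy and, importantly, a justification the paper leaves implicit: the paper silently passes from $\p B_{\ep}(\v d_k)$ to $\p B_r(\v d_k)$ at the end of its Step 1, whereas your observation that $\na\cdot T_0=0$ away from the dislocations (which follows from curl-freeness and the equilibrium equations exactly as you indicate) makes the flux genuinely $r$-independent and legitimizes that step. What it costs is two points you should make explicit to be at the paper's level of rigor: (i) the interchange of $\lim_{\rho\to0}$ with $D_k^V$, which is justified because your computation shows $D_k^V\int_{\Omega^{(\rho)}}\f[\u_0,\w_0]$ equals the ($\rho$-independent) Eshelby flux up to an $O(\rho)$ error that is locally uniform in $\v d_k$; and (ii) the boundedness of $D_k^Vu_0$, $D_k^Vw_0$ near every dislocation, which you attribute to elliptic regularity and which the paper also uses without proof. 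Neither is a gap in the idea, only in the write-up.
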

\begin{proof}
We decompose the renormalized energy
\begin{eqnarray*}
F(\v d_1,\v d_2,\cdots,\v d_N)&=&G(\v d_1,\v d_2,\cdots,\v d_N)+H(\v d_1,\v d_2,\cdots,\v d_N),
\end{eqnarray*}
where
\begin{eqnarray*}
G(\v d_1,\v d_2,\cdots,\v d_N)&:=&\int_{\Omega_{\ep}}\frac{1}{2}\bigg(C\abs{\u_{0}}^2+K\abs{\w_{0}}^2+2R(\u_{0}\cdot\w_{0})\bigg)\ud{x}\ud{y},\\
H(\v d_1,\v d_2,\cdots,\v d_N)&:=&\sum_{i=1}^N\sum_{m\neq i}\int_{B_{\ep}(\v d_m)}\frac{1}{2}\bigg(C\abs{\u_{i}}^2+K\abs{\w_{i}}^2+2R(\u_{i}\cdot\w_{i})\bigg)\ud{x}\ud{y}\\
&&+\sum_{m=1}^N\sum_{i=1}^{N-1}\sum_{j=i+1}^N\int_{B_{\ep}(\v d_m)}\bigg(C(\u_i\cdot\u_j)+K(\w_i\cdot\w_j)+R(\u_i\cdot\w_j)+R(\u_j\cdot\w_i)\bigg)\ud{x}\ud{y}\no\\
&&+\sum_{m=1}^N\int_{B_{\ep}(\v d_m)}\frac{1}{2}\bigg(C\abs{\na u_{0}}^2+K\abs{\na w_{0}}^2+2R(\na u_{0}\cdot\na w_{0})\bigg)\ud{x}\ud{y}\no\\
&&+\sum_{m=1}^N\sum_{i=1}^N\int_{\p B_{\ep}(\v d_m)}\bigg(u_{0}(C\u_i+R\w_i)+w_{0}(K\w_i+R\u_i)\bigg)\cdot n\ud{s},\no
\end{eqnarray*}
with
\begin{eqnarray*}
D_k^VF=D_k^VG+D_k^VH.
\end{eqnarray*}
We divide the proof into several steps:\\
\ \\
Step 1: Estimate of $D_k^VG$.\\
We write
\begin{eqnarray*}
I:&=&D_k^V\bigg(\int_{\Omega_{\ep}}\frac{1}{2}\bigg(C\abs{\u_{0}}^2+K\abs{\w_{0}}^2+2R(\u_{0}\cdot\w_{0})\bigg)\ud{x}\ud{y}\bigg)\no\\
&=&\int_{\Omega_{\ep}}\bigg(C\u_0\cdot D_k^V\u_0+K\w_0\cdot D_k^V\w_0+R\u_0\cdot D_k^V\w_0+R\w_0\cdot D_k^V\u_0\bigg)\ud{x}\ud{y}\no\\
&&-\int_{\p B_{\ep}(\v d_k)}\frac{1}{2}\bigg(C\abs{\u_{0}}^2+K\abs{\w_{0}}^2+2R(\u_{0}\cdot\w_{0})\bigg)\v V\cdot n\ud{s}.\no
\end{eqnarray*}
Hence, by the equations (\ref{temp 1}), we have
\begin{eqnarray*}
&&\int_{\Omega_{\ep}}\bigg(C\u_0\cdot D_k^V\u_0+K\w_0\cdot D_k^V\w_0+R\u_0\cdot D_k^V\w_0+R\w_0\cdot D_k^V\u_0\bigg)\ud{x}\ud{y}\\
&=&\int_{\Omega_{\ep}}\bigg(C\u_0\cdot \na(D_k^Vu_0-\u_k\cdot\v V)+K\w_0\cdot \na(D_k^Vw_0-\w_k\cdot\v V)\no\\
&&+R\u_0\cdot \na(D_k^Vw_0-\w_k\cdot\v V)+R\w_0\cdot \na(D_k^Vu_0-\u_k\cdot\v V)\bigg)\ud{x}\ud{y}\no\\
&=&-\sum_{j=1}^N\int_{\p B_{\ep}(\v d_j)}\bigg(C\u_0\cdot (D_k^Vu_0-\u_k\cdot\v V)\cdot n+K\w_0\cdot (D_k^Vw_0-\w_k\cdot\v V)\cdot n\no\\
&&+R\u_0\cdot (D_k^Vw_0-\w_k\cdot\v V)\cdot n+R\w_0\cdot (D_k^Vu_0-\u_k\cdot\v V)\cdot n\bigg)\ud{s}.\no
\end{eqnarray*}
We obtain
\begin{eqnarray*}
&&D_k^V\bigg(\int_{\Omega_{\ep}}\frac{1}{2}\bigg(C\abs{\u_{0}}^2+K\abs{\w_{0}}^2+2R(\u_{0}\cdot\w_{0})\bigg)\ud{x}\ud{y}\bigg)\\
&=&-\sum_{j=1}^N\int_{\p B_{\ep}(\v d_j)}\bigg(C\u_0\cdot (D_k^Vu_0-\u_k\cdot\v V)\cdot n+K\w_0\cdot (D_k^Vw_0-\w_k\cdot\v V)\cdot n\no\\
&&+R\u_0\cdot (D_k^Vw_0-\w_k\cdot\v V)\cdot n+R\w_0\cdot (D_k^Vu_0-\u_k\cdot\v V)\cdot n\bigg)\ud{s}\no\\
&&-\int_{\p B_{\ep}(\v d_k)}\frac{1}{2}\bigg(C\abs{\u_{0}}^2+K\abs{\w_{0}}^2+2R(\u_{0}\cdot\w_{0})\bigg)\v V\cdot n\ud{s}\no\\
&=&-\int_{\p B_r(\v d_k)}\bigg(\f[\u_0,\w_0]\i-(C\u_0\otimes\u_0+K\w_0\otimes\w_0+R\u_0\otimes\w_0+R\w_0\otimes\u_0)\bigg)\v V\cdot n\ud{s}\no\\
&&-\sum_{j\neq k}\int_{\p B_{\ep}(\v d_j)}\bigg(C\u_0\cdot (D_k^Vu_0-\u_k\cdot\v V)\cdot n+K\w_0\cdot (D_k^Vw_0-\w_k\cdot\v V)\cdot n\no\\
&&+R\u_0\cdot (D_k^Vw_0-\w_k\cdot\v V)\cdot n+R\w_0\cdot (D_k^Vu_0-\u_k\cdot\v V)\cdot n\bigg)\ud{s}\no\\
&&-\int_{\p B_{\ep}(\v d_k)}\bigg(C\u_0\cdot (D_k^Vu_0-\u_k\cdot\v V)\cdot n+K\w_0\cdot (D_k^Vw_0-\w_k\cdot\v V)\cdot n\no\\
&&+R\u_0\cdot (D_k^Vw_0-\w_k\cdot\v V)\cdot n+R\w_0\cdot (D_k^Vu_0-\u_k\cdot\v V)\cdot n\no\\
&&+(C\u_0\otimes\u_0+K\w_0\otimes\w_0+R\u_0\otimes\w_0+R\w_0\otimes\u_0)\v V\cdot n\bigg)\ud{s}\no\\
&=&I_1+I_2+I_3.\no
\end{eqnarray*}
In above estimates, $I_1$ is the desired term, so we only focus on $I_2$ and $I_3$. We need to cancel
\begin{eqnarray*}
I_2&=&-\sum_{j\neq k}\int_{\p B_{\ep}(\v d_j)}\bigg(C\u_0\cdot (D_k^Vu_0-\u_k\cdot\v V)\cdot n+K\w_0\cdot (D_k^Vw_0-\w_k\cdot\v V)\cdot n\no\\
&&+R\u_0\cdot (D_k^Vw_0-\w_k\cdot\v V)\cdot n+R\w_0\cdot (D_k^Vu_0-\u_k\cdot\v V)\cdot n\bigg)\ud{s}\no,
\end{eqnarray*}
and
\begin{eqnarray*}
I_3&=&-\int_{\p B_{\ep}(\v d_k)}\bigg(C\u_0\cdot (D_k^Vu_0-\u_k\cdot\v V)\cdot n+K\w_0\cdot (D_k^Vw_0-\w_k\cdot\v V)\cdot n\no\\
&&+R\u_0\cdot (D_k^Vw_0-\w_k\cdot\v V)\cdot n+R\w_0\cdot (D_k^Vu_0-\u_k\cdot\v V)\cdot n\no\\
&&+(C\u_0\otimes\u_0+K\w_0\otimes\w_0+R\u_0\otimes\w_0+R\w_0\otimes\u_0)\v V\cdot n\bigg)\ud{s}\no\\
&=&-\int_{\p B_{\ep}(\v d_k)}\bigg(C\u_0\cdot (D_{\theta}u_0+\sum_{j\neq k}\u_j\cdot\v V)\cdot n+K\w_0\cdot (D_{\theta}w_0+\sum_{j\neq k}\w_j\cdot\v V)\cdot n\no\\
&&+R\u_0\cdot (D_{\theta}w_0+\sum_{j\neq k}\w_j\cdot\v V)\cdot n+R\w_0\cdot (D_{\theta}u_0+\sum_{j\neq k}\u_j\cdot\v V)\cdot n.\no
\end{eqnarray*}
\ \\
Step 2: Estimate of $D_k^VH$ - First Term.\\
We directly write
\begin{eqnarray*}
II:&=&D_k^V\bigg(\sum_{i=1}^N\sum_{m\neq i}\int_{B_{\ep}(\v d_m)}\frac{1}{2}\bigg(C\abs{\u_{i}}^2+K\abs{\w_{i}}^2+2R(\u_{i}\cdot\w_{i})\bigg)\ud{x}\ud{y}\bigg)\\
&=&D_k^V\bigg(\sum_{m\neq k}\int_{B_{\ep}(\v d_k)}\frac{1}{2}\bigg(C\abs{\u_{m}}^2+K\abs{\w_{m}}^2+2R(\u_{m}\cdot\w_{m})\bigg)\ud{x}\ud{y}\bigg)\no\\
&&+D_k^V\bigg(\sum_{m\neq k}\sum_{i\neq m}\int_{B_{\ep}(\v d_m)}\frac{1}{2}\bigg(C\abs{\u_{i}}^2+K\abs{\w_{i}}^2+2R(\u_{i}\cdot\w_{i})\bigg)\ud{x}\ud{y}\bigg)\no\\
&=&II_1+II_2.\no
\end{eqnarray*}
In $II_1$, we know each $D_{\theta}\u_m=D_{\theta}\w_m=0$ since $m\neq k$, then we have
\begin{eqnarray*}
II_1&=&\sum_{m\neq k}\int_{B_{\ep}(\v d_k)}\bigg(C\u_m\cdot\na(\u_m\cdot\v V)+K\w_m\cdot\na(\w_m\cdot\v V)\\
&&+R\u_m\cdot\na(\w_m\cdot\v V)+R\w_m\cdot\na(\u_m\cdot\v V)\bigg)\ud{x}\ud{y}\no\\
&=&\sum_{m\neq k}\int_{\p B_{\ep}(\v d_k)}\bigg(C\u_m\cdot n(\u_m\cdot\v V)+K\w_m\cdot n(\w_m\cdot\v V)\no\\
&&+R\u_m\cdot n(\w_m\cdot\v V)+R\w_m\cdot n(\u_m\cdot\v V)\bigg)\ud{s}.\no
\end{eqnarray*}
Also, since the domain and functions do not move for $i\neq k$, we have
\begin{eqnarray*}
II_2&=&D_k^V\bigg(\sum_{m\neq k}\int_{B_{\ep}(\v d_m)}\frac{1}{2}\bigg(C\abs{\u_{k}}^2+K\abs{\w_{k}}^2+2R(\u_{k}\cdot\w_{k})\bigg)\ud{x}\ud{y}\bigg)\\
&=&-\sum_{m\neq k}\int_{B_{\ep}(\v d_m)}\bigg(C\u_k\cdot\na(\u_k\cdot\v V)+K\w_k\cdot\na(\w_k\cdot\v V)\no\\
&&+R\u_k\cdot\na(\w_k\cdot\v V)+R\w_k\cdot\na(\u_k\cdot\v V)\bigg)\ud{x}\ud{y}\no\\
&=&-\sum_{m\neq k}\int_{\p B_{\ep}(\v d_m)}\bigg(C\u_k\cdot n(\u_k\cdot\v V)+K\w_k\cdot n(\w_k\cdot\v V)\no\\
&&+R\u_k\cdot n(\w_k\cdot\v V)+R\w_k\cdot n(\u_k\cdot\v V)\bigg)\ud{s}.\no
\end{eqnarray*}
\ \\
Step 3: Estimate of $D_k^VH$ - Second Term.\\
We directly decompose
\begin{eqnarray*}
III:&=&D_k^V\Bigg(\sum_{i<j}\int_{B_{\ep}(\v d_k)}\bigg(C(\u_i\cdot\u_j)+K(\w_i\cdot\w_j)+R(\u_i\cdot\w_j)+R(\u_j\cdot\w_i)\bigg)\ud{x}\ud{y}\Bigg)\\
&&+D_k^V\Bigg(\sum_{m\neq k}\sum_{i<j}\int_{B_{\ep}(\v d_m)}\bigg(C(\u_i\cdot\u_j)+K(\w_i\cdot\w_j)+R(\u_i\cdot\w_j)+R(\u_j\cdot\w_i)\bigg)\ud{x}\ud{y}\Bigg)\no\\
&=&III_1+III_2.\no
\end{eqnarray*}
Then we have
\begin{eqnarray*}
III_1&=&\sum_{i\neq k}\sum_{j\neq i}\int_{B_{\ep}(\v d_k)}\bigg(C\u_i\cdot\na(\u_j\cdot\v V)+K\w_i\cdot\na(\w_j\cdot\v V)\\
&&+R\u_i\cdot\na(\w_j\cdot\v V)+R\w_i\cdot\na(\u_j\cdot\v V)\bigg)\ud{x}\ud{y}\no\\
&=&\sum_{i\neq k}\sum_{j\neq i}\int_{\p B_{\ep}(\v d_k)}\bigg(C\u_i\cdot n(\u_j\cdot\v V)+K\w_i\cdot n(\w_j\cdot\v V)\no\\
&&+R\u_i\cdot n(\w_j\cdot\v V)+R\w_i\cdot n(\u_j\cdot\v V)\bigg)\ud{s}.\no
\end{eqnarray*}
\begin{eqnarray*}
III_2&=&-\sum_{m\neq k}\sum_{i\neq k}\int_{B_{\ep}(\v d_m)}\bigg(C\u_i\cdot\na(\u_k\cdot\v V)+K\w_i\cdot\na(\w_k\cdot\v V)\\
&&+R\u_i\cdot\na(\w_k\cdot\v V)+R\w_i\cdot\na(\u_k\cdot\v V)\bigg)\ud{x}\ud{y}\no\\
&=&-\sum_{m\neq k}\sum_{i\neq k}\int_{\p B_{\ep}(\v d_m)}\bigg(C\u_i\cdot n(\u_k\cdot\v V)+K\w_i\cdot n(\w_k\cdot\v V)\no\\
&&+R\u_i\cdot n(\w_k\cdot\v V)+R\w_i\cdot n(\u_k\cdot\v V)\bigg)\ud{s}.\no
\end{eqnarray*}
\ \\
Step 4: Estimate of $D_k^VH$ - Third Term.\\
We directly decompose
\begin{eqnarray*}
IV:&=&D_k^V\Bigg(\int_{B_{\ep}(\v d_k)}\frac{1}{2}\bigg(C\abs{\na u_{0}}^2+K\abs{\na w_{0}}^2+2R(\na u_{0}\cdot\na w_{0})\bigg)\ud{x}\ud{y}\Bigg)\\
&&+D_k^V\Bigg(\sum_{m\neq k}\int_{B_{\ep}(\v d_m)}\frac{1}{2}\bigg(C\abs{\na u_{0}}^2+K\abs{\na w_{0}}^2+2R(\na u_{0}\cdot\na w_{0})\bigg)\ud{x}\ud{y}\Bigg)\no\\
&=&IV_1+IV_2.\no
\end{eqnarray*}
By integrating by parts, we know
\begin{eqnarray*}
IV_1&=&\int_{\p B_{\ep}(\v d_k)}\bigg(C\na u_0\cdot n(D_k^Vu_0+\na u_0\cdot\v V)+K\na w_0\cdot n(D_k^Vw_0+\na w_0\cdot\v V)\\
&&+R\na u_0\cdot n(D_k^Vw_0+\na w_0\cdot\v V)+R\na w_0\cdot n(D_k^Vu_0+\na u_0\cdot\v V)\bigg)\ud{s}.\no
\end{eqnarray*}
Similarly, we have
\begin{eqnarray*}
IV_2&=&\sum_{m\neq k}\int_{\p B_{\ep}(\v d_m)}\bigg(CD_k^Vu_0(\na u_0\cdot n)+KD_k^Vw_0(\na w_0\cdot n)\\
&&+RD_k^Vw_0(\na u_0\cdot n)+RD_k^Vu_0(\na w_0\cdot n)\bigg)\ud{s}.\no
\end{eqnarray*}
\ \\
Step 5: Estimate of $D_k^VH$ - Fourth Term.\\
We directly decompose
\begin{eqnarray*}
V:&=&D_k^V\Bigg(\sum_{i=1}^N\int_{\p B_{\ep}(\v d_k)}\bigg(u_{0}(C\u_i+R\w_i)+w_{0}(K\w_i+R\u_i)\bigg)\cdot n\ud{s}\Bigg)\\
&&+D_k^V\Bigg(\sum_{m\neq k}\sum_{i=1}^N\int_{\p B_{\ep}(\v d_m)}\bigg(u_{0}(C\u_i+R\w_i)+w_{0}(K\w_i+R\u_i)\bigg)\cdot n\ud{s}\Bigg)\no\\
&=&V_1+V_2.\no
\end{eqnarray*}
Similarly to previous steps, we have
\begin{eqnarray*}
V_1&=&\sum_{i=1}^N\int_{\p B_{\ep}(\v d_k)}\bigg(D_{\theta}u_{0}(C\u_i+R\w_i)+D_{\theta}w_{0}(K\w_i+R\u_i)\bigg)\cdot n\ud{s}\\
&&+\sum_{i\neq k}\int_{\p B_{\ep}(\v d_k)}\bigg(\na u_{0}(C\u_i+R\w_i)\cdot\v V+\na w_{0}(K\w_i+R\u_i)\cdot\v V\bigg)\cdot n\ud{s}.\no
\end{eqnarray*}
Also, we have
\begin{eqnarray*}
V_2&=&\sum_{m\neq k}\sum_{i=1}^N\int_{\p B_{\ep}(\v d_m)}\bigg(D_k^Vu_{0}(C\u_i+R\w_i)+D_k^Vw_{0}(K\w_i+R\u_i)\bigg)\cdot n\ud{s}\\
&&-\sum_{m\neq k}\sum_{i=1}^N\int_{\p B_{\ep}(\v d_k)}\bigg(\na u_{0}(C\u_k+R\w_k)\cdot\v V+\na w_{0}(K\w_k+R\u_k)\cdot\v V\bigg)\cdot n\ud{s}.\no
\end{eqnarray*}
\ \\
Step 6: Synthesis.\\
Collecting all above terms, we have
\begin{eqnarray*}
II_1+III_1+IV_1+V_1&=&\int_{\p B_{\ep}(\v d_k)}\bigg(C\u_0\cdot (D_{\theta}u_0+\sum_{j\neq k}\u_j\cdot\v V)\cdot n+K\w_0\cdot (D_{\theta}w_0+\sum_{j\neq k}\w_j\cdot\v V)\cdot n\no\\
&&+R\u_0\cdot (D_{\theta}w_0+\sum_{j\neq k}\w_j\cdot\v V)\cdot n+R\w_0\cdot (D_{\theta}u_0+\sum_{j\neq k}\u_j\cdot\v V)\cdot n\no\\
&=&-I_3.\no
\end{eqnarray*}
and
\begin{eqnarray*}
II_2+III_2+IV_2+V_2&=&\sum_{j\neq k}\int_{\p B_{\ep}(\v d_j)}\bigg(C\u_0\cdot (D_k^Vu_0-\u_k\cdot\v V)\cdot n+K\w_0\cdot (D_k^Vw_0-\w_k\cdot\v V)\cdot n\no\\
&&+R\u_0\cdot (D_k^Vw_0-\w_k\cdot\v V)\cdot n+R\w_0\cdot (D_k^Vu_0-\u_k\cdot\v V)\cdot n\bigg)\ud{s}\no\\
&=&-I_2.\no
\end{eqnarray*}
Summarizing all above, we obtain
\begin{eqnarray*}
&&I+II+III+IV+V=I_1\\
&=&-\int_{\p B_r(\v d_k)}\bigg(\f[\u_0,\w_0]\i-(C\u_0\otimes\u_0+K\w_0\otimes\w_0+R\u_0\otimes\w_0+R\w_0\otimes\u_0)\bigg)\v V\cdot n\ud{s}.\no
\end{eqnarray*}
Then our result naturally follows.
\end{proof}

\section*{Acknowledgements}

The research is supported by NSF grant
0967140. The author wishes to thank Prof. Irene Fonseca and Prof. Giovanni Leoni for stimulating discussion and valuable comments. Also, the author shows gratitude to the Center for Nonlinear Analysis, PIRE (Partnerships for International Research and Education) project, and Carnegie Mellon University for their support during the preparation of this paper.

\bibliographystyle{siam}
\bibliography{Reference}

\end{document}